\g@addto@macro\normalsize{%
	\setlength\abovedisplayskip{4pt}
	\setlength\belowdisplayskip{4pt}
	\setlength\abovedisplayshortskip{4pt}
	\setlength\belowdisplayshortskip{4pt}
}
\crefname{section}{Section}{Sections}
\crefname{subsection}{Subsection}{Subsections}
\crefname{condition}{Condition}{Conditions}
\crefname{hypothesis}{Hypothesis}{Conditions}
\crefname{assumption}{Assumption}{Assumptions}
\crefname{lemma}{Lemma}{Lemmas}
\crefname{definition}{Definition}{Definitions}
\newtheorem{theorem} {Theorem}[section]
\newtheorem{remarka}{Remark}
\newtheorem{lemma}[theorem]{Lemma}
\newtheorem{counter example}[theorem]{Counter Example}
\newtheorem{remark}[theorem] {Remark}
\newtheorem{definition}[theorem] {Definition}
\newtheorem{claim}[theorem] {Claim}
\def\N{\mathbb{N}}
\def\CC{{\rm \kern.24em \vrule width.02em height1.4ex depth-.05ex \kern-.26emC}}
\def\TagOnRight
\def\AA{{it I} \hskip-3pt{\tt A}}
\def\QQ{\rlap {\raise 0.4ex \hbox{$\scriptscriptstyle |$}} {\hskip -0.1em Q}}
\newcommand{\vo}{\vec{o}\@ifnextchar{^}{\,}{}}
\def\YYint#1#2#3{{\setbox0=\hbox{$#1{#2#3}{\iint}$}
		\vcenter{\hbox{$#2#3$}}\kern-.50\wd0}}
\def\XXint#1#2#3{{\setbox0=\hbox{$#1{#2#3}{\int}$}
		\vcenter{\hbox{$#2#3$}}\kern-.50\wd0}}
\def\namedlabel#1#2{\begingroup
	\def\@currentlabel{#2}%
	\label{#1}\endgroup
}
\newcommand{\rmh}[1]{\mathpalette{\raisem@th{#1}}}
\newcommand{\raisem@th}[3]{\hspace*{-1pt}\raisebox{#1}{$#2#3$}}
\newcounter{desccount}
\newcommand{\descref}[2]{\hyperref[#1]{\textnormal{\textcolor{black}{}\textcolor{blue}{\bf #2}\textcolor{black}{}}}}
\newcommand{\dref}[2]{\hyperref[#1]{\textcolor{black}{(}\textcolor{blue}{\bf #2}\textcolor{black}{)}}}
\newcommand{\be} {\begin{eqnarray}}
\newcommand{\ee} {\end{eqnarray}}
\newcommand{\Bea} {\begin{eqnarray*}}
	\newcommand{\Eea} {\end{eqnarray*}}
\newcommand{\pa} {\partial}
\newcommand{\al} {\alpha}
\newcommand{\rr}{\rightarrow}
\newcommand{\B} {\beta}
\newcommand{\de} {\delta}
\newcommand{\p}  {\prime}
\newcommand{\e}  {\epsilon}
\newcommand{\De} {\Delta}
\newcommand{\la} {\lambda}
\newcommand{\si} {\sigma}
\newcommand{\f}{\infty}
\newcommand{\R}{\mathbb{R}}
\newcommand{\noi} {\noindent}
\newcommand{\ga}{\gamma}
\newcommand{\norm}[1]{\left|\hspace{-0.2mm}\left| #1 \right|\hspace{-0.2mm}\right|}
\newcommand{\abs}[1]{\left| #1\right|}
\newcounter{whitney}
\newcounter{ineqcounter}
\def\ps@pprintTitle{%
	\let\@oddhead\@empty
	\let\@evenhead\@empty
	\def\@oddfoot{}%
	\let\@evenfoot\@oddfoot}
\newcommand{\refcheckize}[1]{%
	\expandafter\let\csname @@\string#1\endcsname#1%
	\expandafter\DeclareRobustCommand\csname relax\string#1\endcsname[1]{%
		\csname @@\string#1\endcsname{##1}\wrtusdrf{##1}}%
	\expandafter\let\expandafter#1\csname relax\string#1\endcsname
}
\newcommand{\mainsectionstyle}{%
	\renewcommand{\@secnumfont}{\bfseries}
	\renewcommand\section{\@startsection{section}{2}%
		\z@{.5\linespacing\@plus.7\linespacing}{-.5em}%
		{\normalfont\bfseries}}%
}
\xpatchcmd{\MaketitleBox}{\hrule}{}{}{}
\xpatchcmd{\MaketitleBox}{\hrule}{}{}{}
\date{}
\title{Existence of BV solution for the Euler-Poisson system in one dimension with  large initial data
}
\author{Shyam Sundar Ghoshal\thanks{Centre for Applicable Mathematics, Tata Institute of Fundamental Research, Post Bag No 6503, Sharadanagar, Bangalore - 560065, India. E-mail: ghoshal@tifrbng.res.in}, Boris Haspot\thanks{Universit\'e Paris Dauphine, PSL Research University, Ceremade, Umr Cnrs 7534, Place du Mar\' echal De Lattre De Tassigny 75775 Paris cedex 16 (France), E-mail: haspot@ceremade.dauphine.fr} \,and Animesh Jana\thanks{Universit\'e Paris Dauphine, PSL Research University, Ceremade, Umr Cnrs 7534, Place du Mar\' echal De Lattre De Tassigny 75775 Paris cedex 16 (France), E-mail: animesh.jana@dauphine.psl.eu}}
\begin{document}
	
	\maketitle
	\begin{abstract}			
				This paper deals with the existence of BV solution for the Euler-Poisson system endowed with a $\gamma$ pressure law. More precisely, we prove the existence of weak solution in the BV framework with arbitrary large initial data when $\gamma=1+2\e$ satisfies a smallness condition.  We use the Glimm scheme combined with a splitting method as introduced in [Poupaud, Rascle and Vila, J. Differential Equations, 1995]. Existence of BV solution of  1-D isentropic Euler equation for large data and $\ga=1+2\e$ is proved in [Nishida and Smoller, Comm. Pure Appl. Math, 1973]. Due to the presence of electric field, the difficulty arises while controlling the Glimm functional for the Euler-Poisson system. It requires a subtle study of wave interaction. In the later part of this article, we  discuss the initial-boundary value problem for the Euler-Poisson system. We prove the existence of $BV$ solution for the initial-boundary value problem with large initial and boundary data. By an explicit example, we also show ill-posedness of initial-boundary value problem for the isentropic Euler equation.
%
	\end{abstract}
\tableofcontents	


\section{Introduction}\label{sec:intro}
 %
 We consider the following Euler-Poisson system, 
\begin{align}
\frac{\pa}{\pa t}\varrho+\frac{\pa}{\pa x}(\varrho u)&=0,\label{eqn-EP-1}\\
\frac{\pa}{\pa t}(\varrho u)+\frac{\pa}{\pa x}(\varrho u^2+P(\varrho))&=-\si \varrho u-\frac{q}{m}\varrho \Psi,\label{eqn-EP-2}\\
\frac{\pa}{\pa x}\Psi&=-\frac{q}{e}(\varrho-\mu),\label{eqn-EP-3}
\end{align}	
for $t>0$ and $x\in\R$. It appears in various models describing physical phenomena such as transport of electron, plasma collision (see \cite{1,MRS,Poupaud-1,Poupaud-2}). In the system \eqref{eqn-EP-1}--\eqref{eqn-EP-3} $\varrho,u$ represent respectively the concentration and mean velocity of electrons whereas the physical constants $q,m,e $ stand for the electric charge, mass of electron and the permittivity of the medium respectively. In equation \eqref{eqn-EP-2}, $P(\varrho)$ is the pressure-density relation and we consider $\gamma$--law in the sequel with $P(\varrho)=\varrho^\gamma$ where $\gamma=1+2\e$ and $\e>0$.
We supplement the previous system with the following Cauchy data:
\begin{equation}\label{eq:initial-data}
\begin{cases}
\varrho(0,x)&=\varrho_0(x),\,x\in\R,\\
u(0,x)&=u_0(x),\,x\in\R.
\end{cases}
\end{equation}
In this article, we want to show the existence of BV solution to Euler-Poisson system \eqref{eqn-EP-1}--\eqref{eqn-EP-3} with large initial data \eqref{eq:initial-data}. For a given time $T$ and BV data $(\varrho_0,u_0)$ we prove that there exists $\ga_0$ such that for any $\ga\in(1,\ga_0)$ the system \eqref{eqn-EP-1}--\eqref{eqn-EP-3} admits a  BV solution on $[0,T]$ corresponding to the initial data $(\varrho_0,u_0)$. Moreover, we study the existence of BV solution to the initial boundary value problem for \eqref{eqn-EP-1}--\eqref{eqn-EP-3} with large BV data. 

 	 The study of mathematical aspects of the Euler-Poisson system has been started in the works of Tatarski\u{\i} \cite{Tatarskii} and Degond-Markowich \cite{DM}. In the article \cite{DM}, authors proved the existence of solution to the steady state Euler-Poisson system in 1-D for the subsonic case, that is, $\abs{u}<c$, where $c:=(p^\p(\varrho))^{1/2}$ denotes the sound speed. The steady state problem of Euler-Poisson system has been studied \cite{AMPS,DiMichele-Marcati-et-al,Gamba,Rosini} by using various methods, e.g. viscosity method \cite{Gamba}, phase plane analysis \cite{AMPS,Rosini}. Local existence of strong solution to Euler-Poisson system in multi-D is known \cite{DLYY} for initial data in suitable Sobolev space. Global results can be found in dimension 2 and 3 in \cite{Guo1,Ionescu,Li} for small perturbation of the state $(\bar{\rho},0)$ with $\bar{\rho}>0$ and the initial velocity is assumed to be irrotational. More recently, Guo, Han and Zhang \cite{Guo} have 
 	 proved that global strong solution exists if the initial data is sufficiently small in Sobolev and weighted Sobolev spaces with sufficiently large index of regularity in one dimension for $\gamma=3$. Concerning the existence of global solution for initial density close from the vacuum, we refer in particular to \cite{Dan1,Had} and the reference therein. In particular in \cite{Had} the authors prove the global existence of free-boundary Euler-Poisson solutions satisfying \textit{the physical vacuum condition}. For a hyperbolic system, it is well known that smooth data in general may not give global smooth solution. In a system like \eqref{eqn-EP-1}--\eqref{eqn-EP-3} with hyperbolic-elliptic mixed structure, the same property holds \cite{E96,Perthame}. Therefore, we look for existence of weak solution. 
 	 
 	 In this article, we are interested in investigating the existence problem in BV framework. In his seminal work \cite{Glimm}, Glimm proved the existence of weak solution to hyperbolic systems of conservation laws with genuinely nonlinear or linearly degenerate characteristic fields for initial data having small total variation bound. 
 	  Later Lax and Glimm in \cite{GL-1} extended this previous result  to $2\times2$ hyperbolic systems  with genuinely nonlinear fields providing that the initial data is only bounded with a smallness condition on the $L^\infty$ norm. Bressan \cite{Bressan-1} developed a new strategy for proving Glimm's result for $n\times n$ hyperbolic systems by using the so called wave front tracking method. This tool is essential to prove the existence of a semi-group of contraction $L^1$ of solutions, it enables in particular to obtain the uniqueness of the Glimm's solution in a class of uniqueness which takes into account the Lax entropy conditions (we refer to \cite{Br} and the references therein for these questions).
	  For hyperbolic system of conservation laws in 1-D vanishing viscosity limit is obtained by Bianchini and Bressan \cite{BB} for small BV data. Nishida \cite{Nishida} proved the existence of global BV weak solution of p-system with $\ga=1$ for large BV data. For $\ga>1$, Nishida and Smoller \cite{Nisi-Smo} showed the existence of global weak solution to 1-D isentropic Euler system in Lagrangian coordinate (known as $p$-system) for arbitrary large BV data by considering a smallness condition on $(\ga-1)/{2}$. The system under consideration \eqref{eqn-EP-1}--\eqref{eqn-EP-3} shares some similarities with hyperbolic balance laws. The existence problem for hyperbolic balance laws has been studied for small BV data via Glimm scheme \cite{DaHs}, wave front tracking method  \cite{AGG}, fixed-point argument \cite{CG-2} and operator splitting method \cite{AG,CG-1}.
 	 
 	 For Euler-Poisson system BV solutions are less studied. Poupaud, Rascle and Vila \cite{P-R-V} have obtained the existence of BV solution for large initial data in the case of the isothermal Euler-Poisson system which corresponds to the pressure law $P(\varrho)=\varrho$ that is $\ga=1$. They use a splitting method combined with Glimm scheme  and the use of the  Nishida functional introduced in \cite{Nishida}  for getting the BV stability of the approximated solution. Besides, there are results \cite{MN,MN-1,ZhangB} concerning the global existence of bounded weak solutions for the Euler-Poisson system \eqref{eqn-EP-1}--\eqref{eqn-EP-3}. These existence results are based on the method of compensated compactness adapted to Euler-Poisson system. Though these results  \cite{MN,MN-1,ZhangB} is valid for $L^\f$ initial data, they fail to provide any information on regularity of solutions. We note that for hyperbolic balance laws, the uniqueness is known \cite{AGG,AG,CG-1,CG-2} only for small BV data. It is not known whether the solutions arising via compensated compactness \cite{MN,MN-1,ZhangB} are uniquely determined by initial condition. We are interested in establishing the existence of BV solution for \eqref{eqn-EP-1}--\eqref{eqn-EP-3}. We also study the existence BV solution to initial-boundary value problem for the Euler-Poisson system when the initial and the boundary data have large total variations. The existence and uniqueness results on initial-boundary value problem for general hyperbolic systems are also limited to small BV setting \cite{AC,Amadori,DM-1}. We would like to point out that the BV setting is well-suited for boundary value problems as it guarantees the solution to have boundary trace and satisfy the boundary condition (see \cite{Amadori,NS-bdy,Nishida}). Since stability and uniqueness results for hyperbolic systems are known only for BV solutions, we keep our focus on proving the existence of BV solution for Euler-Poisson system when initial data has finite (possibly large) total variation.

In this article, we prove existence of BV solutions for initial data with large total variation. Proof is based on two steps: (i) approximating the solution via splitting method with an adaptation of Glimm scheme, (ii) estimating Glimm functional to get TV bound of solution. Concerning the approximate part, we modify the scheme developed in \cite{P-R-V} with necessary changes. This has been required in order to get proper interaction estimates. Note that TV bound for $P(\varrho)=\varrho$ case is achieved by considering TV of Riemann invariant, more precisely, it has been observed in \cite{Nishida} that TV bound of Riemann invariant decays in time. For $\ga>1$ case, the TV norm of Riemann invariant can increase for isentropic Euler system but with an order $\e \abs{\al}\abs{\B}$ where $\e=(\ga-1)/2$ and $\al,\B$ are the strengths of two shocks interacting. Therefore, we need to consider Glimm functional to get bound as in Nishida and Smoller \cite{Nisi-Smo}. For Euler-Poisson equation, at each time step there is an increment due to electric field which has been bounded with $C_1\De t$ where $C_1$ depends on the TV norm of initial data \cite{P-R-V}. Since for $\ga>1$ we are working with quadratic functional, it needs a non-trivial treatment to obtain TV bound. Note that at each step of the numerical scheme,  taking into account the electric field the $\varrho$ variable is unchanged compared with the Glimm scheme for the isentropic Euler system whereas the velocity unknown  $u$ is translated for some $\de$ and due to this modification, the total variation can increase in a more significant way as in \cite{Nisi-Smo}. With a careful analysis of Riemann problem and interaction waves, we are able to show that change in the total variation can be bounded by order of $\De t$.

In section \ref{sec:IBVP}, we discuss the initial-value problem (IBVP) for the Euler-Poisson system. For large $BV$ initial and boundary data, we prove the existence of $BV$ solution for the Euler-Poisson system. Similar to the Cauchy problem, we use the operator splitting method combined with a Glimm scheme. We apply the Glimm scheme to construct solution for IBVP of the isentropic Euler system in the spirit of Nishida-Smoller \cite{NS-bdy}. We remark that our construction of solution for gas dynamics in the Eulerian coordinate is different from the one arising in the work of Nishida-Smoller \cite{NS-bdy} done for the $p$-system. For hyperbolic systems of conservation laws with small $BV$ data, existence and stability of $BV$ solutions to IBVP have been studied in \cite{AC,Amadori,DM-1} mainly for two different types of boundary conditions (namely, characteristic and non-characteristic). By explicit examples, we demonstrate an ill-posedness result for IBVP of the isentropic Euler system (see Remark \ref{remark-counter}).\\

	\noi\textbf{Far field conditions:} Before we state our main result we need to define the following far field conditions. Throughout this article we assume the following far-field conditions:
	\begin{align}
	&\mu(x):=\left\{\begin{array}{rl}
	\mu^-&\mbox{ for }x<-L,\\
	\mu^+&\mbox{ for }x>L,
	\end{array}\right.\,
	\si(x):=\left\{\begin{array}{rl}
	\si^-&\mbox{ for }x<-L,\\
	\si^+&\mbox{ for }x>L,
	\end{array}\right.\label{eq:bdy-cond1}\\
	&\varrho_0(x):=\left\{\begin{array}{rl}
	\mu^-&\mbox{ for }x<-L,\\
	\mu^+&\mbox{ for }x>L,
	\end{array}\right.\,
	u_0(x):=\left\{\begin{array}{rl}
	u^-&\mbox{ for }x<-L,\\
	u^+&\mbox{ for }x>L,
	\end{array}\right.\label{eq:bdy-cond2}
	\end{align}
	for some $L>0$. We prove that these properties are preserved for any positive time $t\geq0$:
	\begin{equation}
	\begin{aligned}
	&\varrho(t,x)=\mu^-,\;t>0, \;x<-L(t),\;\varrho(t,x)=\mu^+,\;t>0,\;x<L(t),\\
	&u(t,x)=u^{-}(t),t>0, x<-L(t),\;u(t,x)=u^+(t),t>0,x>L(t),
	\end{aligned}
	\label{2.6}
	\end{equation}
	with $L(t)$ depending on the time $t$.
	Therefore (\ref{eqn-EP-3}) yields:
	\begin{equation}
	\Psi(t,x)=\Psi^-(t)-\int^x_{-\infty}\frac{q}{e}(\varrho(t,y)-\mu(y))dy\mbox{ for } t>0,x\in\R.
	\label{2.7}
	\end{equation}
	The electric field at $x=-\infty$, $\Psi^-(t)$ is assumed to be known. 
	As it is mentioned in \cite{P-R-V} using (\ref{2.6}) we can explicitly compute the values at infinity of $\varrho$, $u$, and $\Psi$. We have in particular:
	\begin{equation}
	\begin{cases}
	\begin{aligned}
	&
	u^{\pm}(t)=u_0^{\pm}-\frac{q}{m}\int^t_0e^{-\sigma^\pm(t-s)}\Psi^\pm(s) ds,\\
	&\Psi^+(t)=\Psi^+(0)+\int^t_0\frac{q}{e}(\mu^+u^+(s)-\mu^-u^-(s))ds+\Psi^-(t)-\Psi^-(0),\\
	&\Psi^+(0)=\Psi^-(0)-\int^{+\infty}_{-\infty}\frac{q}{e}(\varrho_0(y)-\mu(y))dy.
	\end{aligned}
	\end{cases}
	\label{2.9}
	\end{equation}
	Now we are ready to state our main result.
\begin{theorem}\label{theorem-1}
Let $T>0$ and $(\varrho_0,u_0)$ such that  $(\varrho_0,u_0)\in BV(\R,\R_+\times\R)$ and there exists $M,\bar{\varrho},\underline{\varrho}\in\R$ such that
	$$
	\abs{u_0(x)}\leq M,\,0<\underline{\varrho}\leq \varrho_0(x)\leq \bar{\varrho}\mbox{ for a.e. }x\in\R.
    $$
Furthermore $\sigma,\mu$ and $\Psi^-$ verify the following assumptions:
 \begin{itemize}
 	\item $\sigma\geq 0$, $\sigma\in BV(\R)$, $\mu \geq 0$, $\mu  \in BV(\R)$,
 	\item $\Psi^-\in BV(0,+\infty)$.
 \end{itemize} Then there exists $\gamma_0\in(1,2)$ such that the following holds: for any $\gamma\in(1,\gamma_0]$
there exists a BV weak solution $(\varrho,u,\Psi)$ of \eqref{eqn-EP-1}--\eqref{eq:initial-data} satisfying \eqref{eq:bdy-cond1}--\eqref{eq:bdy-cond2} on $[0,T]$.
\end{theorem}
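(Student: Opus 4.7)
The plan is to combine an operator-splitting scheme for the Euler--Poisson system with the random-choice (Glimm) method adapted to the $\ga$-law pressure, in the spirit of Nishida--Smoller and of Poupaud--Rascle--Vila. Fix a grid step $\De t > 0$ and times $t_n = n\De t$. On each time slab $[t_n, t_{n+1})$ the scheme first propagates the unknowns via the homogeneous isentropic Euler system using the Glimm scheme, and at $t = t_{n+1}$ applies the source update which keeps $\varrho$ unchanged but translates the velocity $u$ pointwise by a correction $\de_n(x)$ obtained by integrating the ODE $\dot u = -\si u - \frac{q}{m}\Psi$ over $[t_n, t_{n+1}]$, with $\Psi$ reconstructed from the current density through \eqref{2.7}. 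The far-field preservation \eqref{2.6} and the explicit formulas \eqref{2.9} ensure that the scheme is well defined despite the nonlocal Poisson term.

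For the hyperbolic substep we would work with the Riemann invariants of the isentropic Euler system and follow the Nishida--Smoller analysis: solve each Riemann problem as a composition of a $1$-wave and a $2$-wave (each a shock or rarefaction) and establish wave-interaction estimates showing that outgoing strengths differ from incoming ones by $O(\e\, |\al|\,|\B|)$ for two colliding waves of strengths $\al,\B$, with analogous estimates for shock-rarefaction and rarefaction-rarefaction crossings. This yields monotonicity of the quadratic Glimm functional $F(J) = L(J) + K\, Q(J)$ along any $I$-curve $J$, where $L$ is the total strength of waves and $Q$ is the potential of pending interactions, provided $K\e$ is sufficiently small. An invariant-region argument in the Riemann-invariant plane furnishes the required uniform $L^\infty$ bound.

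The central obstacle, and the point where the argument must go beyond Poupaud--Rascle--Vila, is the analysis of the source substep in combination with the \emph{quadratic} Glimm functional. In the isothermal case $\ga = 1$ it suffices to track Nishida's linear functional and one checks that the shift $\de_n$ contributes an increment of order $\De t$ using the BV bounds on $\si,\mu,\Psi^-$ together with the BV control at time $t_n$. Here the shift is still a BV function of $x$ with variation controlled by $\De t$ times the current total strength, and it generates new waves at each jump of $u + \de_n$; one must therefore control both the linear increment $\De L$ and, more delicately, the quadratic cross-term $\De Q$, which couples every newly produced wave to \emph{every} pre-existing wave on the profile. The plan is to exploit the smallness of the BV norm of $\de_n$ together with the uniform bound on $L$ from the previous step to obtain $\De F \leq C\, \De t\,(1 + F)$ with $C$ depending only on the data; a discrete Gronwall argument then bounds $F$ uniformly on $[0,T]$, provided $\e$ (equivalently $\ga - 1$) is taken small in terms of $T$ and the initial BV norm, which determines the threshold $\ga_0$. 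Once uniform BV and $L^\infty$ bounds on the approximate solutions are secured, Helly's compactness theorem combined with Glimm's equidistribution argument for the random sequence delivers a weak solution of \eqref{eqn-EP-1}--\eqref{eq:initial-data} on $[0,T]$ satisfying the far-field conditions \eqref{eq:bdy-cond1}--\eqref{eq:bdy-cond2}.
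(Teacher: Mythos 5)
Your proposal follows essentially the same route as the paper: operator splitting (a homogeneous Glimm/random-choice step followed by a pointwise velocity shift coming from the relaxation and electric-field terms), the Nishida--Smoller quadratic functional $F=V+KQ$ with $K$ of order $\e$, a discrete Gronwall estimate $\De F\le C\,\De t\,(1+F)$ over each time slab, and a smallness condition on $\e=(\ga-1)/2$ depending on $T$ and the initial total variation so that $\e F$ stays small, which is what fixes $\ga_0$; the limit is then taken by Helly's theorem together with Glimm's consistency argument for almost every sampling sequence. The skeleton is the right one and nothing in it is headed in a wrong direction.

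The gap is that the decisive step --- the claim that the source substep contributes only $O(\De t)(1+F)$ to the functional --- is asserted rather than proved, and this is where essentially all of the new work lies. Concretely, one must compare the outgoing waves of the Riemann problem with data $(\varrho_-,u_-+\de_-)$, $(\varrho_+,u_++\de_+)$ to those of the unshifted problem and show that each outgoing shock strength increases by at most $|\de_+-\de_-|$ (plus, in the mixed cases, the strength of a wave that changes type). A shift of $u$ by $\de$ translates a state along the diagonal $(r,s)\mapsto(r+\de,s+\de)$ in the Riemann-invariant plane, and the shock curves $s-s_0=g_1(r-r_0,\varrho_0)$ are not invariant under such translations, so the comparison requires a case analysis over which of the regions $\Omega_{I},\dots,\Omega_{IV}$ the right state occupies before and after the shift, resting on the structural facts $0\le g_i'<1$ and $g_i''\ge 0$; the paper devotes Lemmas \ref{lemma:position}, \ref{lemma:estimate-1}, \ref{lemma:estimate-2} and \ref{lemma:estimate-3} to exactly this. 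Without these translated-Riemann-problem estimates your bound on $\De V$ and on the cross-term $\De Q$ has no justification, so as written the proposal is a correct plan rather than a proof.
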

\begin{remarka}
We would like to point out that Theorem \ref{theorem-1} extends the work of Poupaud, Rascle, Villa \cite{P-R-V} to the case $\gamma=1+2\e$, when $\e$ is sufficiently small. In \cite{MN,MN-1} Marcati and Natalini prove the existence of global weak solution in the framework of $L^\infty$ initial data without any smallness assumption on the initial data, however we think that the $BV$ framework is relevant if we wish to prove the uniqueness of the solutions by following the arguments developed by Bressan for the homogeneous case (see \cite{Br}) at least for small initial data. We leave this for future work. See \cite{AGG,AG,CG-1,CG-2} for related uniqueness results for hyperbolic balance laws with small BV data. 
\end{remarka}
\subsection{Comparison with the previous results}
We have already mentioned about the existence and uniqueness results of \cite{AGG,AG,MN,MN-1,Nishida,NS-bdy,Nisi-Smo}. In this subsection, we discuss them in more details and compare them with our result. 

The Euler-Poisson equations \eqref{eqn-EP-1}--\eqref{eqn-EP-3} can be seen as a hyperbolic system of balance law with non-local source term. For these type of equations, existence of BV solution in local time has been established \cite{CG-2} for initial data with small total variation. Authors \cite{CG-2} have also shown the uniqueness of BV solution satisfying a certain conditions (see \cite{CG-2}). The existence has been proved by using operator splitting method along with a fixed point argument. In comparison with \cite{CG-2}, we note that Theorem \ref{theorem-1} provides existence result for \eqref{eqn-EP-1}--\eqref{eqn-EP-3} with initial data having large total variation whereas the result of \cite{CG-2} is restricted for small BV data. We refer to \cite{AGG,AG} for existence of BV solutions to hyperbolic balance laws with local source term.

For 1-D hyperbolic systems, another way to prove existential results for large data is compensated compactness. For isentropic Euler equations, method has been studied extensively \cite{DCL,DiPerna-1,DiPerna-2,L1,L2}. In the context of Euler-Poisson system, it has been studied by \cite{MN,MN-1,ZhangB}. In \cite{MN,MN-1}, authors construct the solution by using Lax-Friedrich and Godunov scheme along with an operator splitting method. They have shown the convergence by adapting compensated compactness to Euler-Poisson system. Their results \cite{MN,MN-1} is valid for bounded initial data with compact support. We note that though the existence result has been shown for larger class than BV, it is not clear that solutions constructed in \cite{MN,MN-1,ZhangB} are unique. It is only known that the solutions constructed in \cite{MN,MN-1,ZhangB} belong to $L^\f$ space and the density is non-negative. Moreover, it does not guarantee that the solution remain in BV even for BV initial data. Similarly, it is also not clear whether density remained away from vacuum when initial density is away from vacuum. We would like to mention that the solution constructed in Theorem \ref{theorem-1} of the current article has the following properties: (i) they belong to BV, (ii) the density remains away from vacuum. 

In section \ref{sec:IBVP}, we study initial-boundary value problem for the Euler-Poisson system in the spirit of Nishida \cite{Nishida} and Nishida-Smoller \cite{NS-bdy}. For hyperbolic system of balance laws, the well-posedness for initial boundary value problem has been shown \cite{Amadori} for small initial and boundary data. Author \cite{Amadori} has proved the existence of BV solutions for two different types of boundary conditions namely, characteristic and non-characteristic conditions. On the other hand, Nishida \cite{Nishida} studied the initial boundary value problem for $p$-system with $\gamma=1$ and large BV data. An analogous result has been established in \cite{NS-bdy} for $p$-system with $\gamma=1+2\e_0$ when $\e_0>0$ is sufficiently small. In these articles \cite{Nishida,NS-bdy} it has been proved that the velocity component $u$ attains the boundary condition $u_b$ for a.e. $t>0$. In section \ref{sec:IBVP}, we prove the existence of BV solution to the initial-boundary value problem for the Euler-Poisson system with large initial and boundary data. We remark that the solutions constructed in section \ref{sec:IBVP} belong to BV space, hence they have boundary trace and we show that the momentum $m=\varrho u$ attains the boundary condition $m_b(t)$ for a.e. $t>0$. It is not clear that compensated compactness can be adapted in this set up as solutions which are constructed via compensated compactness \cite{MN,MN-1,ZhangB} fail to guarantee the boundary trace.   

To sum things up, we want to mention that our goal is to study the BV solutions in order to extend the Glimm-Bressan theory to the Euler-Poisson system. The current article is a part of the program to achieve a well-posed theory for weak solutions of Euler-Poisson system which consisting of the following properties: (i) uniqueness, (ii) BV regularity of solution from BV initial data, (iii) non-appearance of vacuum and (iv) consistency with initial-boundary value problem (which has important role in control theory). 



Rest of the article is organized as follows. In the next section we precise the numerical scheme that we are going to use. In section \ref{sec3}, we recall the basic properties of the Riemann problems and the description of the Lax curves in terms of the Riemann invariant. In section \ref{sec4}, we derive uniform estimates for approximate solution in $BV$ spaces by studying carefully the wave interaction of our scheme and we give the proof of the Theorem \ref{theorem-1}. At last, in section \ref{sec:IBVP} we study the initial boundary value problem for Euler-Poisson system \eqref{eqn-EP-1}--\eqref{eqn-EP-3}.
\section{Numerical scheme}\label{sec:scheme}
Our aim is to show the existence of weak solution via an adaptation of Glimm scheme and a splitting method. We slightly modify the approximate scheme defined in \cite{P-R-V}  to obtain a sequence of approximate solution $(\varrho_{\Delta x},u_{\Delta x})_{\Delta x>0}$. The next step will be to obtain  $BV$ bound via Glimm functional \cite{Glimm,Nisi-Smo} in order to pass to the limit when $\Delta t>0$ goes to $0$  and to recover an almost global weak solution of the Euler-Poisson system.   
\\
\\
\noi\textbf{Scheme:}
\\
Let $x_{i},t_n$ be defined as $x_{i}=i\De x$ for $i\in\mathbb{Z}$ and $t_n=n\De t$ for $n\geq0$ where $\De x>0 ,\De t>0$ are related as 
\begin{equation}
\frac{\De t}{\De x}= \la
\end{equation}
where $\la>0$ will be chosen later and will be sufficiently small in order to satisfy CFL condition. 
\begin{remarka}
		Note that once we fix $T$ and the initial data $\varrho_0,u_0$, the parameter $\lambda$ is fixed and it does not vary over time steps $t_n$. 
		 It is the main difference between our scheme and the scheme used in \cite{P-R-V}, in \cite{P-R-V} it seems important to ensure that $\lambda$ varies at each step of time in order to satisfy the CFL condition at each step. In our case the CFL condition will be satisfied at each step because the smallness condition on $\e$ depend on the time $T$.
	\end{remarka}
Let $I_{n,i}$ intervals be defined as $I_{n,i}:=(x_{i-1},x_{i+1})$ for $i\in\mathbb{Z}$ such that $n+i$ is even. Let $\varrho^\De_0, u_0^\De, \mu^\De, \sigma^\De$ be defined as 
\begin{align}
(\varrho^\De_0,u^\De_0)(x)&:=(\varrho_0(x_i),u_0(x_i))\mbox{ for }x\in I_{0,i},\label{def:approx-initial-1}\\
(\sigma^\De(x),\mu^\De(x))&:=(\sigma(x_i),\mu(x_i))\mbox{ for }x\in I_{0,i}.\label{def:approx-initial-2}
\end{align} 
Furthermore these functions respect the conditions (\ref{eq:bdy-cond1}) and (\ref{eq:bdy-cond2}).

 Next, we set $\Psi^-_n=\Psi^-(t_n)$ for $n\geq0$ and $u^{\pm}_0=u^\pm$. Then we define
\begin{align}
\Psi_{n+1}^+&=\Psi^+_{n}+\frac{q}{e}(\mu^+u^+_n-\mu^-u^-_n)\De t+\Psi^-_{n+1}-\Psi^-_{n},\label{12}\\
\Psi^+_0&=\Psi^-_0-\int\limits_{-\f}^{\f}\frac{q}{e}(\varrho^\De_0(y)-\mu^\De(y))\,dy.\label{12-1}
\end{align}
The discretization of the velocity at the infinity is given by:
\begin{align}
u^{\pm}_{n+1}=u^\pm_{n}\mbox{exp}(-\si^\pm\De t)-\frac{q}{m}\frac{1-\mbox{exp}(-\si^\pm\De t)}{\si^\pm}\Psi^\pm_{n+1}\mbox{ for }n\geq0.
\end{align}
We can bound the sequences  $(\Psi^\pm_n)_{n\in\mathbb{N}}$ and $(u^\pm_n)_{n\in\mathbb{N}}$ via the following lemma (see \cite{P-R-V}).
\begin{lemma}[\cite{P-R-V}]\label{lemma1}
	Let $\Psi^-\in L^{\f}_{loc}([0,\f))$ and $T>0$. Then there are two constants $E_T,C_T$ such that  for any $n\in\N$ with $t_n< T$ we have
	\begin{equation}
	\abs{\Psi^\pm_n}\leq E_T,\,\abs{u^\pm_n}\leq C_T.
	\end{equation}
\end{lemma}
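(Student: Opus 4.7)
The plan is to close a discrete Gronwall loop between $\{\Psi^+_n\}$ and $\{u^+_n\}$ while bounding $\{\Psi^-_n\}$ and $\{u^-_n\}$ directly. The assumption $\Psi^-\in L^{\infty}_{\loc}([0,\infty))$ already gives $|\Psi^-_n|\leq K:=\|\Psi^-\|_{L^\infty([0,T])}$ for every $t_n\leq T$. To control $u^-_n$, I would iterate the velocity recursion, yielding the closed-form expression
\[
u^-_n = u^-_0\,e^{-\sigma^- t_n} \;-\; \frac{q}{e}\,\frac{1-e^{-\sigma^-\Delta t}}{\sigma^-}\sum_{k=1}^{n} e^{-\sigma^-(n-k)\Delta t}\,\Psi^-_k,
\]
with the obvious modification when $\sigma^-=0$. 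Since $(1-e^{-x})/x\leq 1$ for every $x\geq 0$, this gives $|u^-_n|\leq |u^-_0| + (|q|/e)\,T\,K$, uniformly in $n$ with $t_n\leq T$.

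For the coupled pair $(\Psi^+_n, u^+_n)$, I would telescope the update law for $\Psi^+$ to obtain
\[
\Psi^+_n = \Psi^+_0 + \Psi^-_n - \Psi^-_0 + \frac{q}{e}\,\Delta t\sum_{k=0}^{n-1}\bigl(\mu^+ u^+_k - \mu^- u^-_k\bigr),
\]
and then substitute the analogous explicit representation of $u^+_k$ in terms of $\{\Psi^+_j\}_{j\leq k}$ together with the just-proved $L^\infty$ bound on $u^-_k$. After swapping the order of the resulting double sum and using $n\Delta t\leq T$ together with the geometric factor estimate $(1-e^{-x})/x\leq 1$, the inequality reduces to
\[
|\Psi^+_n|\leq A_T + B_T\,\Delta t\sum_{j=1}^{n-1}|\Psi^+_j|,
\]
where $A_T$ and $B_T$ depend only on $T$, $K$, $|u^\pm_0|$, $|\Psi^\pm_0|$, $\mu^\pm$ and $q/e$, but not on $n$ nor on $\Delta t$.

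The discrete Gronwall inequality then yields $|\Psi^+_n|\leq A_T(1+B_T\Delta t)^{n-1}\leq A_T\exp(B_T T)=:E_T$; inserting this bound back into the explicit formula for $u^+_n$ gives $|u^+_n|\leq |u^+_0| + (|q|/e)\,T\,E_T=:C_T$, completing the argument. The only non-routine step is the management of the double sum produced by substituting $u^+_k$ into the telescoped expression for $\Psi^+_n$; once the order of summation is swapped, everything reduces to standard iteration. Crucially, both $E_T$ and $C_T$ depend on $T$ but not on the mesh size $\Delta t$ nor on the index $n$, which is exactly what the lemma asserts.
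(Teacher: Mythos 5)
Your argument is correct. The paper itself does not prove this lemma --- it simply cites \cite{P-R-V} --- so there is nothing internal to compare against; your telescoping of the $\Psi^+$ update, the explicit resolvent formula for $u^\pm_n$, the elementary bound $(1-e^{-x})/x\le 1$ for $x\ge 0$ (valid here since $\sigma^\pm\ge 0$), the exchange of the double sum giving the factor $n\Delta t\le T$, and the discrete Gronwall step constitute a complete, mesh-independent proof, which is essentially the standard route one would also find in \cite{P-R-V}.
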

Assume that $(\varrho_n^\Delta,u_n^\Delta)=(\varrho_{n,i},u_{n,i})_{i\in\mathbb{Z}}$ ($\varrho_n^\Delta$ and $u_n^\Delta$ are the piece-wise constant functions with value respectively $\varrho_{n,i}$ and  $u_{n,i}$ on $I_{n,i}$) is known for some $n\geq 1$ and satisfy:
\begin{align}
\varrho^\De_n(x)&\geq0,\label{16}\\
\varrho^\De_n(x)&=\left\{\begin{array}{rl}
\mu^-&\mbox{ for }x<-L-n\De x,\\
\mu^+&\mbox{ for }x>L+n\De x,
\end{array}\right.\label{17}\\
u^\De_n(x)&=\left\{\begin{array}{rl}
u_n^-&\mbox{ for }x<-L-n\De x,\\
u_n^+&\mbox{ for }x>L+n\De x.
\end{array}\right.\label{far-field-u}
\end{align}
Next, our goal is to define $(\varrho^\De_{n+1},u^\De_{n+1})$. For this purpose, we consider the homogeneous system 
\begin{align}
\frac{\pa}{\pa t}\varrho+\frac{\pa}{\pa x}(\varrho u)&=0,\label{eqn-Euler-1}\\
\frac{\pa}{\pa t}(\varrho u)+\frac{\pa}{\pa x}(\varrho u^2+\varrho^{\ga})&=0.\label{eqn-Euler-2}
\end{align}	
Let $U_l=(\varrho_l,u_l)$ and $U_r=(\varrho_r,u_r)$ with $\varrho_l>0, \varrho_r>0$. Consider the Riemann data 
\begin{equation}\label{def:Rie-data}
U_0(x):=\left\{\begin{array}{rl}
U_l\mbox{ for }x<0,\\
U_r\mbox{ for }x>0.
\end{array}\right.
\end{equation}
We denote Riemann solution $\mathcal{R}[U_l,U_r](t,x)$ as the entropy solution \cite{Lax} to the homogeneous system \eqref{eqn-Euler-1}--\eqref{eqn-Euler-2} with initial data $U_0$ as in \eqref{def:Rie-data}. Note that for $U_l,U_r$ close enough $\mathcal{R}[U_l,U_r](t,x)$ exists and no vacuum arises in the Riemann solution (when $U_l$ and $U_r$ are not necessary close, we refer to \cite{Nisi-Smo}). Now we define
\begin{equation}
(\varrho_{n+\frac{1}{2},i},u_{n+\frac{1}{2},i})=\mathcal{R}[U_{n,i-1},U_{n,i+1}](\De t,\De x\theta_{n})\mbox{ for some }\theta_n\in[-1,1].
\end{equation}
\begin{remarka}
As for the Glimm scheme, our scheme is defined modulo a sequence $\theta=(\theta_n)_{n\in\mathbb{N}}$, we will see that the numerical scheme converge for almost every sequence $\theta$.
\end{remarka}
We define now $\varrho_{n+\frac{1}{2}}^{\De,\theta}$  as $\varrho_{n+\frac{1}{2}}^{\De,\theta}(x):=\varrho_{n+\frac{1}{2},j}$ for $x\in I_{n+1,j}$ with $n+1+j$  even (we can define $u_{n+\frac{1}{2}}^{\De,\theta}$ in a similar way) and we set
\begin{equation}
\varrho^{\De,\theta}_{n+1}(x)=\varrho^{\De,\theta}_{n+\frac{1}{2}}(x)\mbox{ for }x\in \R.
\end{equation}
This definition of $\varrho^\De_n$ ensures that the conditions (\ref{16}) and (\ref{17}) are satisfied.

Next we consider a quantity $\xi_{n+1}$ defined as follows, 
\begin{equation}
\xi_{n+1}:=\left\{
\begin{array}{ll}
0&\mbox{ for }\abs{x}>L+(n+1)\De x,\\
-\frac{q}{e}((1+\varrho^\De_{n+1}(x))\ga_{n+1}-1-\mu^\De(x))&\mbox{ for }\abs{x}<L+(n+1)\De x.
\end{array}\right.
\label{23}
\end{equation}
As in \cite{P-R-V} we use the corrector term $\gamma_{n+1}$ in order to estimate in a simple way the $L^1$ norm of $\xi_n$. The term $\ga_{n+1}$ is given by
\begin{align}
\de_0&=\int\limits_{-L}^{L}(1+\varrho_0^\De(y))\,dy,\\
\de_{n+1}&=\de_n+(1+\mu^-)\De x+(1+\mu^+)\De x+\De t(\mu^-u_n^--\mu^+u_n^+),\label{25}\\
\ga_{n+1}&=\de_{n+1}\left[\int\limits_{-L-(n+1)\De x}^{L+(n+1)\De x}(1+\varrho^\De_{n+1}(y))\,dy\right]^{-1}.\label{26}
\end{align}
With the definition above, we obtain the following estimate on the $L^1$ norm of the sequence $(\xi_n)_{n\in\mathbb{N}}$. 
\begin{lemma}\label{lemma-2}
We have for $T>0$ and $\lambda\leq \frac{1}{C_T}$ with $C_T>0$ defined in Lemma \ref{lemma1}:
\begin{equation}
0\leq \de_n\leq \de_T+(2+\mu^-+\mu^+)n\De x\;\;\mbox{ for}\;t_n\leq T,
	\label{27}
	\end{equation}
	with $\de_T=\de_0+(\mu^-+\mu^+)TC_T$. Moreover,
	\begin{align}
	\int\limits_{-\f}^{\f}\xi_{n+1}(y)\,dy&=\int\limits_{-\f}^{\f}\xi_n(y)\,dy+\frac{q \De t}{e}(\mu^+u_n^+-\mu^-u_n^-),\label{28}\\
	\norm{\xi_n}_{L^1}&\leq \xi_T+4\frac{q}{e}(1+\norm{\mu^\De}_{L^\f})(L+n\De x),\mbox{ for }t_n\leq T \label{29}
	\end{align}
	where $\xi_T=\norm{\xi_0}_{L^1}+\frac{q}{e}T(\mu^-+\mu^+)C_T$.
\end{lemma}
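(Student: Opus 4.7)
My plan is to establish the three assertions in the order in which they appear, treating them as consequences of the recursion \eqref{25} and the defining property \eqref{26} of the corrector $\ga_{n+1}$.

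For \eqref{27} I iterate the recurrence \eqref{25}, writing
\[
\de_{n+1}-\de_n=(2+\mu^-+\mu^+)\De x+\De t\,(\mu^-u_n^--\mu^+u_n^+).
\]
The lower bound $\de_n\geq 0$ follows by induction from $\de_0\geq 0$: by Lemma \ref{lemma1}, $|u_n^{\pm}|\leq C_T$ for $t_n\leq T$, and the restriction $\la\leq 1/C_T$ gives $\De t\,C_T\leq \De x$, so the electric-field contribution is dominated by $(\mu^-+\mu^+)\De x$ and hence $\de_{n+1}-\de_n\geq 2\De x$. Telescoping and using $n\De t\leq T$ to control the sum of the electric-field contributions yields
\[
\de_n\leq \de_0+(\mu^-+\mu^+)TC_T+(2+\mu^-+\mu^+)n\De x=\de_T+(2+\mu^-+\mu^+)n\De x.
\]

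For \eqref{28} I exploit the fact that \eqref{26} is exactly the renormalization needed to make $\int \xi_{n+1}$ collapse to a simple expression. Setting $M_n:=\int_{-L-n\De x}^{L+n\De x}(1+\mu^\De(y))\,dy$, a direct computation from \eqref{23} and \eqref{26} gives $\int_{-\f}^{\f}\xi_{n+1}(y)\,dy=-\tfrac{q}{e}(\de_{n+1}-M_{n+1})$. Since $\mu^\De\equiv\mu^\pm$ on the two annuli added in passing from $M_n$ to $M_{n+1}$, one has exactly $M_{n+1}-M_n=(1+\mu^-)\De x+(1+\mu^+)\De x$, and subtracting this from \eqref{25} cancels the geometric boundary terms, leaving
\[
\int \xi_{n+1}-\int \xi_n=-\frac{q}{e}\De t\,(\mu^-u_n^--\mu^+u_n^+),
\]
which is \eqref{28}.

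For \eqref{29} I bound $|\xi_n|$ pointwise on $|x|<L+n\De x$ via the triangle inequality $|\xi_n(x)|\leq \tfrac{q}{e}((1+\varrho^\De_n(x))\ga_n+1+\mu^\De(x))$ and integrate, using the defining identity $\int(1+\varrho^\De_n)\ga_n=\de_n$, to obtain $\norm{\xi_n}_{L^1}\leq \tfrac{q}{e}(\de_n+M_n)$. Plugging in \eqref{27}, the elementary $M_n\leq 2(L+n\De x)(1+\norm{\mu^\De}_{L^\f})$, and $2+\mu^-+\mu^+\leq 2(1+\norm{\mu^\De}_{L^\f})$, I obtain a bound of the form $\tfrac{q}{e}\de_T+4\tfrac{q}{e}(1+\norm{\mu^\De}_{L^\f})(L+n\De x)$ up to an additive $\tfrac{q}{e}M_0$. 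To convert $\tfrac{q}{e}\de_0$ inside $\tfrac{q}{e}\de_T$ into $\norm{\xi_0}_{L^1}$, I use that $\int\xi_0=-\tfrac{q}{e}(\de_0-M_0)$ yields $\tfrac{q}{e}(\de_0-M_0)\leq \norm{\xi_0}_{L^1}$, and the residual $\tfrac{q}{e}M_0\leq 2\tfrac{q}{e}L(1+\norm{\mu^\De}_{L^\f})$ is absorbed in the $(L+n\De x)$ term, giving exactly \eqref{29} with $\xi_T=\norm{\xi_0}_{L^1}+\tfrac{q}{e}T(\mu^-+\mu^+)C_T$.

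The computations are essentially bookkeeping; the one genuine ingredient is the CFL-type restriction $\la\leq 1/C_T$ used in \eqref{27}, which is also what ensures that the denominator in \eqref{26} is strictly positive and thus that $\ga_n$ is well defined. The remaining two claims are then algebraic consequences of the defining relation for $\ga_n$ combined with the upper bound on $\de_n$ from the first step.
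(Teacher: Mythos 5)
Your proof is correct and follows essentially the same route as the paper's: the sign analysis of the increment in \eqref{25} under $\la\leq 1/C_T$ for \eqref{27}, the identity $\int\xi_{n+1}=-\tfrac{q}{e}\bigl(\de_{n+1}-M_{n+1}\bigr)$ combined with $M_{n+1}-M_n=(2+\mu^-+\mu^+)\De x$ for \eqref{28}, and the bound $\norm{\xi_n}_{L^1}\leq\tfrac{q}{e}(\de_n+M_n)$ for \eqref{29}. The only (cosmetic) difference is in the last step, where the paper controls $-\int\xi_n$ by iterating \eqref{28} while you control $\de_n$ directly via \eqref{27} and recover $\norm{\xi_0}_{L^1}$ from $\int\xi_0=-\tfrac{q}{e}(\de_0-M_0)$; since $-\int\xi_n=\tfrac{q}{e}(\de_n-M_n)$, the two bookkeepings are equivalent.
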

For a detailed proof of Lemma \ref{lemma-2}, we refer to \cite{P-R-V}. To make it convenient to reader, here we briefly discuss some of the key steps.

\begin{proof}[Sketched proof of Lemma \ref{lemma-2}:]
The proof of (\ref{27}) yields from the formula (\ref{25}) and the Lemma \ref{lemma1}. In addition we use the fact that using Lemma \ref{lemma1}, we have:
$$
\begin{aligned}
&(1+\mu^-)\De x+(1+\mu^+)\De x+\De t(\mu^-u_n^--\mu^+u_n^+)\\
&=2\De x+\mu^-\De t(\frac{\De x}{\De t}+u_n^-)+\mu^+\De t(\frac{\De x}{\De t}-u_n^+)\\
&\geq 2\De x+\mu^-\De t(\frac{1}{\lambda}-C_T)+\mu^+\De t(\frac{1}{\lambda}-C_T)\\
&\geq 0.
\end{aligned}
$$
From (\ref{23}) and (\ref{26}), we deduce that:
$$
\begin{aligned}
& \int^{+\infty}_{-\infty}\xi_{n+1}(y) dy=-\frac{q}{e}\big(\delta_{n+1}-\int^{L+(n+1)\De x}_{-L-(n+1)\De x}(1+\mu^\Delta(y))dy\big).
\end{aligned}
$$
Using  (\ref{25}) and the fact that $\mu^\Delta=\mu^+$ for $y>L$ and $\mu^\Delta=\mu^-$ for $y<L$, we get (\ref{28}). Now from (\ref{23}) we have the following since $\gamma_n\geq 0$ (this is a direct consequence of the definition (\ref{26}) and the fact that $\delta_n\geq 0$):
$$
\begin{aligned}
&|\xi_{n}|\leq - \xi_{n}+\frac{2 q}{e}(1+\mu^\De).
\end{aligned}
$$
It implies that:
$$ \int^{+\infty}_{-\infty}|\xi_{n}(y) |dy\leq - \int^{+\infty}_{-\infty}\xi_{n}(y) dy+\frac{2 q}{e}\int^{L+nh}_{-L-nh}(1+\mu^\De(y))dy.$$
But from (\ref{28}) we get:
$$  \int^{+\infty}_{-\infty}\xi_n(y) dy\leq\|\xi_0\|_{L^1}+\frac{q}{e}n \De t(N^-+N^+)C_T.$$
The last two estimates lead to (\ref{29}). 
	
\end{proof}

We define $\Psi_{n}^\De:\R\rr\R$ for $n\geq0$ as follows
\begin{equation}
\Psi^\De_n(x):=\Psi_{n,i}\mbox{ for }x\in I_{n,i}\mbox{ where }\Psi_{n,i}=\Psi^{-}_{n}+\int\limits_{-\f}^{x_{i+1}}\xi_{n}(y)\,dy.
\end{equation}
From (\ref{12}), (\ref{23}) and (\ref{28}) we deduce as in \cite{P-R-V} the following Lemma.
\begin{lemma}[\cite{P-R-V}]
	For $x>L+(n+1)\De x,\,\Psi^\De_{n+1}(x)=\Psi^+_{n+1}$ and $\Psi^\De_{n+1}(x)=\Psi^-_{n+1}$ for $x<-L-(n+1)\De x$.
	\label{lem23}
\end{lemma}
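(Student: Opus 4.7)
The plan is to proceed by induction on $n$, reducing the lemma to the algebraic identity
\begin{equation*}
\Psi^+_n - \Psi^-_n = \int_{-\infty}^{+\infty} \xi_n(y)\,dy,
\end{equation*}
together with the support property $\spt(\xi_n) \subset [-L-n\De x,\,L+n\De x]$ which is immediate from \eqref{23}. Granted these two ingredients, the conclusion follows directly: for $x > L + (n+1)\De x$ lying in a cell $I_{n+1,i}=(x_{i-1},x_{i+1})$ one has $x_{i+1}>x>L+(n+1)\De x$, so
\begin{equation*}
\int_{-\infty}^{x_{i+1}}\xi_{n+1}(y)\,dy=\int_{\R}\xi_{n+1}(y)\,dy=\Psi^+_{n+1}-\Psi^-_{n+1},
\end{equation*}
and hence $\Psi_{n+1,i}=\Psi^-_{n+1}+(\Psi^+_{n+1}-\Psi^-_{n+1})=\Psi^+_{n+1}$; the symmetric computation for $x<-L-(n+1)\De x$ yields $\Psi_{n+1,i}=\Psi^-_{n+1}$.

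For the base case $n=0$, the convention $\gamma_0=1$ (equivalently $\delta_0=\int_{-L}^{L}(1+\varrho_0^\De)\,dy$) combined with \eqref{23} gives $\xi_0(y)=-\tfrac{q}{e}(\varrho_0^\De(y)-\mu^\De(y))$ for $|y|<L$ and $\xi_0\equiv 0$ elsewhere. Since $\varrho_0^\De=\mu^\De$ on $|y|>L$ by \eqref{17} and the far-field assumption, $\int_{\R}\xi_0\,dy=-\tfrac{q}{e}\int_{\R}(\varrho_0^\De-\mu^\De)\,dy$, which is exactly $\Psi^+_0-\Psi^-_0$ by the initialization formula for $\Psi^+_0$ stated just after \eqref{12}.

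For the inductive step, suppose the identity holds at level $n$. Subtracting $\Psi^-_{n+1}$ from both sides of \eqref{12} and using the induction hypothesis,
\begin{equation*}
\Psi^+_{n+1}-\Psi^-_{n+1}=(\Psi^+_n-\Psi^-_n)+\tfrac{q\De t}{e}(\mu^+u^+_n-\mu^-u^-_n)=\int_{\R}\xi_n\,dy+\tfrac{q\De t}{e}(\mu^+u^+_n-\mu^-u^-_n),
\end{equation*}
and the right-hand side equals $\int_{\R}\xi_{n+1}\,dy$ by \eqref{28}, closing the induction.

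There is no genuine analytic difficulty here; the lemma is a purely algebraic consistency check confirming that the telescoping relation between the discrete boundary values $\Psi^\pm_n$ and the piecewise integral of $\xi_n$ is preserved by one time step of the scheme. The only point requiring a little care is the bookkeeping on the staggered grid $\{I_{n+1,i}\}$: one must ensure that for $x$ in the claimed far-field range the endpoint $x_{i+1}$ of the enclosing cell lies on the correct side of $\spt(\xi_{n+1})$, which is harmless and can be arranged by assuming $L$ is aligned with the mesh $\{x_i\}$.
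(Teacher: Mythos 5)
Your proof is correct and follows exactly the route the paper indicates: the lemma is deduced from \eqref{12}, \eqref{23} and \eqref{28} via the identity $\Psi^+_{n+1}-\Psi^-_{n+1}=\int_{\R}\xi_{n+1}(y)\,dy$, which you establish by the same induction that the paper (citing \cite{P-R-V}) leaves implicit. The only point you add beyond the paper is the remark about aligning $L$ with the mesh on the left far field, which is a harmless and standard normalization.
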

Now we are ready to define $u_{n+1,i}$ as follows,
\begin{equation}\label{def:u:n+1}
u_{n+1,i}=u_{n+\frac{1}{2},i}\mbox{exp}(-\si_{i}^\De\De t)-\frac{q}{m}\frac{1-\mbox{exp}(-\si_{i}^\De\De t)}{\si^\De_{i}}\Psi_{n+1,i}.
\end{equation}
As previously we set:
$$u^{\De,\theta}_{n+1}(x)=u_{n+1,j}\;\;\mbox{on}\;I_{n+1,j}\;\mbox{with}\;n+1+j\;\mbox{even}.$$
We can note that the Lemma \ref{lem23} implies the assumption (\ref{far-field-u}). 
Finally we can define the solution $(\varrho^{\De ,\theta}, u^{\De ,\theta},\Psi^{\De ,\theta})$ of our numerical scheme for any time $t\in[0,T]$ with $T>0$ defined in the Theorem \ref{theorem-1}. First for $t_n\leq T$ we set:
\begin{equation}
\varrho^{\De ,\theta}(t_n,x)=\varrho^\De_n,\quad u^{\De,\theta}(t_n,x)=u^\De_n,
\label{6.1}
\end{equation}
\begin{equation}
(\varrho^{\De ,\theta},u^{\De,\theta})\;\mbox{satisfies (\ref{eqn-Euler-1})-(\ref{eqn-Euler-2})}\;\mbox{on}\;(t_n,t_{n+1}).
\label{6.2}
\end{equation}
Indeed we can solve (\ref{eqn-Euler-1})-(\ref{eqn-Euler-2}) on $(t_n,t_{n+1})$ at the condition that there is no interaction between the solutions of each Riemann problem (we will see later that it will be the case by imposing a CFL condition on the parameter $\lambda$).

The electric field is now defined by:
\begin{equation}
\begin{cases}
\begin{aligned}
&\gamma^{\De,\theta}(t)=\gamma_n,\;t_n\leq t<t_{n+1},\\
&\xi^{\De,\theta}=-\frac{q}{e}((1+\varrho^{\De,\theta})\gamma^{\De,\theta}-1-\mu^\De),\;x\in\R,\;t>0,\\
&\Psi^{\De,\theta}(t,x)=\Psi^-(t)+\int^x_{-\infty}\xi^{\De,\theta}(y)dy, \;x\in\R,\;t\in[0,T].
\end{aligned}
\end{cases}
\label{6.5}
\end{equation}

\section{Riemann Invariant and Lax curves}
\label{sec3}
In this section, we discuss some basic facts on the homogeneous problem for the Euler system and recall some results on Riemann invariant from Nishida-Smoller \cite{Nisi-Smo}. We can rewrite the system \eqref{eqn-Euler-1}--\eqref{eqn-Euler-2} in density and momentum variables $\varrho,m$ as follows
\begin{align}
\frac{\pa}{\pa t}\varrho+\frac{\pa}{\pa x}m&=0,\label{eqn-homo-1}\\
\frac{\pa}{\pa t}m+\frac{\pa}{\pa x}\left(\frac{m^2}{\varrho}+\varrho^{\ga}\right)&=0.\label{eqn-homo-2}
\end{align}	
This system is hyperbolic for $\varrho>0$ and eigenvalues are
\begin{equation}
\begin{aligned}
&\la_1(\varrho,u)=\frac{m}{\varrho}-\sqrt{\gamma}\varrho^{\frac{\ga-1}{2}}=u-\sqrt{\gamma}\varrho^{\frac{\ga-1}{2}},
\\
&\la_2(\varrho,u)=\frac{m}{\varrho}+\sqrt{\gamma}\varrho^{\frac{\ga-1}{2}}=u+\sqrt{\gamma}\varrho^{\frac{\ga-1}{2}}.
\end{aligned}
\end{equation}
\begin{enumerate}
	\item \textbf{Shock waves:} We denote the admissible shock curve via the Lax criterion $S_i(\varrho_-,m_-)$ starting from $(\varrho_-,m_-)\in(0,\f)\times\R$ defined as 
	\begin{equation}\label{shock-curve-m}
	m-m_-=\frac{m_-}{\varrho_-}(\varrho-\varrho_-)+(-1)^{i}(\varrho-\varrho_-)\sqrt{\frac{\varrho(\varrho^{\ga}-\varrho_-^\ga)}{\varrho_-(\varrho-\varrho_-)}}\mbox{ if }(-1)^i(\varrho-\varrho_-)<0\mbox{ for }i=1,2.
	\end{equation}
	In $(\varrho,u)$ variables, we can rewrite \eqref{shock-curve-m} as
	\begin{equation}\label{shock-curve-u}
	u-u_-=-\sqrt{\frac{(\varrho^{\ga}-\varrho_-^\ga)(\varrho-\varrho_-)}{\varrho_-\varrho}}\mbox{ if }(-1)^i(\varrho-\varrho_-)<0\mbox{ for }i=1,2.
	\end{equation}
	\item \textbf{Rarefaction waves:} We denote rarefaction curve $R_i(\varrho_-,m_-)$ starting from $(\varrho_-,m_-)\in(0,\f)\times\R$ defined as 
	\begin{equation}\label{rare-curve-m}
	m-m_-=\frac{m_-}{\varrho_-}(\varrho-\varrho_-)+(-1)^{i}\frac{2\sqrt{\gamma}}{\gamma-1}\varrho\left(\varrho^{\frac{\ga-1}{2}}-\varrho_-^{\frac{\ga-1}{2}}\right)\mbox{ if }(-1)^i(\varrho-\varrho_-)>0\mbox{ for }i=1,2.
	\end{equation}
	In $(\varrho,u)$ variables, we can rewrite \eqref{rare-curve-m} as
	\begin{equation}\label{rare-curve-u}
	u-u_-=(-1)^{i}\frac{2\sqrt{\gamma}}{\gamma-1}\left(\varrho^{\frac{\ga-1}{2}}-\varrho_-^{\frac{\ga-1}{2}}\right)\mbox{ if }(-1)^i(\varrho-\varrho_-)>0\mbox{ for }i=1,2.
	\end{equation}

\end{enumerate}

\noi\textbf{Riemann invariant:} The corresponding Riemann invariants $r,s$ are as follows
\begin{equation}
r(\varrho,u)=u-\sqrt{\ga}\frac{\varrho^\e-1}{\e}\mbox{ and }s(\varrho,u)=u+\sqrt{\ga}\frac{\varrho^\e-1}{\e}\mbox{ where }\e=\frac{\ga-1}{2}.
\end{equation}
We recall the following result on the Riemann problem solved by Riemann \cite{Rie}.
\begin{lemma}\label{lemma:Euler-1}
	The Cauchy problem \eqref{eqn-homo-1}--\eqref{eqn-homo-2} with Riemann data \eqref{def:Rie-data} has a piece-wise continuous solution in $\R\times\R_+$ satisfying
	\begin{equation}
	r(t,x)=r(\varrho(t,x),u(t,x))\geq\min\{r_-,r_+\},\,s(x,t)=s(\varrho(t,x),u(t,x))\leq\max\{s_-,s_+\}
	\end{equation}
	where $r_\pm=r(\varrho_\pm,u_\pm),s_\pm=s(\varrho_\pm,u_\pm)$ with $s_--r_+>-\frac{\sqrt{\ga}}{\e}$.
\end{lemma}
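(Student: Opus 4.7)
The plan is to construct the Riemann solution by the classical Lax procedure as a 1-wave from $U_- = (\varrho_-, u_-)$ followed by a 2-wave reaching $U_+ = (\varrho_+, u_+)$, separated by a constant intermediate state $U_m$, and then to read off the bounds on the Riemann invariants in each piece of the resulting structure.

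First I would parametrize the forward 1-wave curve $W_1(U_-)$ issuing from $(r_-, s_-)$ in the $(r, s)$-plane. A direct substitution of \eqref{rare-curve-u} into the definitions of $r$ and $s$ shows that on the 1-rarefaction branch (where the intermediate density $\varrho_m$ satisfies $\varrho_m < \varrho_-$) one has $s \equiv s_-$ and $r$ strictly increasing from $r_-$. On the 1-shock branch ($\varrho_m > \varrho_-$), I would differentiate \eqref{shock-curve-u} along the shock curve to show that both $r$ and $s$ strictly decrease from $(r_-, s_-)$ as $\varrho_m$ grows. A symmetric analysis of the backward 2-wave curve $W_2^{-1}(U_+)$ from $(r_+, s_+)$ gives a 2-rarefaction branch along $\{r = r_+,\ s \leq s_+\}$ and a 2-shock branch on which both $r$ and $s$ strictly increase.

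Next, I would show that $W_1(U_-)$ and $W_2^{-1}(U_+)$ meet in a unique intermediate state $(r_m, s_m)$. Each curve is the graph of a strictly monotone function $s = \phi_\pm(r)$ on a half-line, so the intersection reduces to a scalar monotone problem. The hypothesis $s_- - r_+ > -\sqrt{\ga}/\e$ enters precisely in the rarefaction-rarefaction case, where the middle state is forced to be $(r_+, s_-)$: using the identity $s - r = 2\sqrt{\ga}(\varrho^\e - 1)/\e$, the hypothesis gives $\varrho_m^\e > 1/2$, so the middle density is bounded away from vacuum. In the other three combinations (shock-rarefaction, rarefaction-shock, shock-shock) the existence and uniqueness of the intersection follow from strict monotonicity and continuity of the wave curves, together with the fact that the shock branches extend to infinity in each coordinate.

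Finally, I would verify the pointwise bounds $r \geq \min\{r_-, r_+\}$ and $s \leq \max\{s_-, s_+\}$ by a case-by-case inspection of the four wave patterns. In each configuration the monotonicity recorded in the first step forces every $(r, s)$-value appearing either at the outer states, at the middle state, or inside a rarefaction fan, to sit in the region $\{r \geq \min(r_-, r_+),\ s \leq \max(s_-, s_+)\}$. The main obstacle is the monotonicity of $r$ and $s$ along the 1-shock branch: formula \eqref{shock-curve-u} involves competing contributions from $u$ and from $\varrho^\e$, and the signs of $dr/d\varrho$ and $ds/d\varrho$ along the Hugoniot set are not visible from inspection. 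Establishing them requires a careful computation of these derivatives using the shock relation together with the Lax entropy inequality, exploiting the $C^2$-tangency between the shock and rarefaction branches at the base point; this is the heart of the Nishida-Smoller analysis which the lemma invokes.
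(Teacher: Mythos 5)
The paper offers no proof of this lemma at all: it is recalled as a classical fact with a citation to Riemann, and the supporting structure (the explicit $(r,s)$-parametrization of the shock curves and the monotonicity $0\leq g_i'<1$, $g_i''\geq 0$) is quoted from Nishida--Smoller as Lemma \ref{lemma:NS}. Your outline is the standard Lax/Smoller construction and is essentially correct: the identities $s\equiv s_-$ along a $1$-rarefaction and $r\equiv r_m$ along a $2$-rarefaction follow by direct substitution as you say, and the decrease of \emph{both} invariants along the $1$-shock (resp.\ increase along the backward $2$-shock) is exactly the content of the paper's formulas for $S_1,S_2$ together with $g_i'\geq 0$; so the "careful computation" you defer at the end need not be redone --- you can simply invoke Lemma \ref{lemma:NS}, since $s_--s=g_1(r_--r,\varrho_-)\geq 0$ and $r_--r\geq 0$ give the monotonicity, and the four-case inspection then yields $r\geq\min\{r_-,r_+\}$, $s\leq\max\{s_-,s_+\}$ at the constant states and inside the fans. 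Your identification of where the hypothesis is used is also right: in the rarefaction--rarefaction pattern the middle state is $(r_+,s_-)$ and one needs $s_--r_+>-2\sqrt{\ga}/\e$ to stay away from vacuum (in the mixed and shock--shock patterns this is automatic because $g_i'<1$); the lemma's stated threshold $-\sqrt{\ga}/\e$ is a stronger sufficient condition, giving $\varrho_m^{\e}>1/2$ as you compute, and the paper's own remark after the lemma quotes the weaker $-2\sqrt{\ga}/\e$. What your proof buys over the paper's bare citation is a self-contained justification; what it costs is that the existence-and-uniqueness-of-intersection step should be phrased via the transversality of the two Lipschitz graphs (slopes in $[0,1)$ in the appropriate coordinates), which your monotone-scalar-problem formulation captures adequately for a sketch.
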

\begin{remarka}
The condition $s_--r_+>-2\sqrt{\ga}/\e$ is equivalent to $u_+-u_-<\frac{\sqrt{\gamma}}{\e}(\varrho_-^\e+\varrho_+^\e)$, in particular for any state $U_-$, $U_+$ we can solve the Riemann problem provided that $\e>0$ is sufficiently small.
\end{remarka}
In terms of Riemann coordinate $(r,s)$ we rephrase the shock curves $S_i\equiv S_i(r_-,s_-)=S_i(\varrho_-,u_-)$ for $i=1,2$ as below (see \cite{Nisi-Smo})
\begin{align}
&S_1:\left\{\begin{array}{rl}
r_--r&=\varrho_-^\e\left[\sqrt{\frac{(\al-1)(\al^{\ga}-1)}{\al}}+\sqrt{\ga}\frac{\al^{\e}-1}{\e}\right],\\
s_--s&=\varrho_-^\e\left[\sqrt{\frac{(\al-1)(\al^{\ga}-1)}{\al}}-\sqrt{\ga}\frac{\al^{\e}-1}{\e}\right],
\end{array}\right.\mbox{ where }\al=\frac{\varrho}{\varrho_-}\geq 1,\\
&S_2:\left\{\begin{array}{rl}
s_--s&=\varrho_-^\e\left[\sqrt{\frac{(1-\al)(1-\al^{\ga})}{\al}}+\sqrt{\ga}\frac{1-\al^{\e}}{\e}\right],\\
r_--r&=\varrho_-^\e\left[\sqrt{\frac{(1-\al)(1-\al^{\ga})}{\al}}-\sqrt{\ga}\frac{1-\al^{\e}}{\e}\right],
\end{array}\right.\mbox{ where }0<\al=\frac{\varrho}{\varrho_-}\leq 1.\label{shock-curve-2}
\end{align}
Borrowing notation from \cite{Nisi-Smo} we write respectively for the $S_1$ and $S_2$ admissible shocks $s_--s=g_1(r_--r,\varrho_-)$ for $r\leq r_-$ and $r_--r=g_2(s_--s,\varrho_-)$ for $s\leq s_-$.
\begin{lemma}[Nishida-Smoller, \cite{Nisi-Smo}]\label{lemma:NS}
	Let $g_i(\al,\varrho_-)$ be defined above for $i=1,2,\varrho_->0$. Then
	\begin{equation}
	0\leq g^\p_i(\al,\varrho_-)<1\mbox{ and }g^{\p\p}_i(\al,\varrho_-)\geq0\mbox{ for }i=1,2,
	\label{47}
	\end{equation}
	where we use the notation $g^{\p}_i=\frac{\pa g_i}{\pa \al},g^{\p\p}_i=\frac{\pa^2 g_i}{\pa^2 \al}$ for $i=1,2$.
\end{lemma}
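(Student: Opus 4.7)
The plan is to perform a direct parametric differentiation along the explicit shock curve. Fix $\varrho_->0$ and, for the $S_1$ branch (where $\alpha := \varrho/\varrho_-\ge 1$), set
$$F(\alpha) := \varrho_-^\epsilon \sqrt{\frac{(\alpha-1)(\alpha^\gamma-1)}{\alpha}}, \qquad G(\alpha) := \frac{\sqrt{\gamma}\,\varrho_-^\epsilon (\alpha^\epsilon-1)}{\epsilon},$$
so that $S_1$ is parametrized by $a := r_--r = F(\alpha)+G(\alpha)$ and $b := s_--s = F(\alpha)-G(\alpha) = g_1(a,\varrho_-)$, with $a,b\ge 0$ and $a=b=0$ at $\alpha=1$. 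The $S_2$ branch has the same structure after substituting $\alpha\mapsto 1/\alpha$ (and swapping the roles of $r,s$), so it suffices to analyze $g_1$ in detail and invoke this symmetry at the end.

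For the first derivative, the chain rule yields
$$g_1'(a,\varrho_-) = \frac{F'(\alpha)-G'(\alpha)}{F'(\alpha)+G'(\alpha)}.$$
A direct computation gives $G'(\alpha)=\sqrt{\gamma}\,\varrho_-^\epsilon \alpha^{\epsilon-1}>0$, and differentiating $F^2 = \varrho_-^{2\epsilon}(\alpha^\gamma-\alpha^{\gamma-1}-1+\alpha^{-1})$ shows $F'(\alpha)>0$ for $\alpha>1$. Hence $g_1'<1$ is automatic. The lower bound $g_1'\ge 0$ reduces to $F'(\alpha)\ge G'(\alpha)$, which after squaring and clearing denominators becomes a polynomial inequality in $\alpha$ that I would establish by factoring. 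Near the base point $\alpha=1$ both sides vanish to the same first and second order, and a Taylor expansion shows $F-G = c(\gamma)(\alpha-1)^3 + O((\alpha-1)^4)$ with $c(\gamma)>0$; this confirms the inequality locally and reflects the classical third-order contact between shock and rarefaction curves.

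For the second derivative, a further parametric differentiation gives
$$g_1''(a,\varrho_-) = \frac{2\bigl(F''(\alpha)G'(\alpha)-F'(\alpha)G''(\alpha)\bigr)}{(F'(\alpha)+G'(\alpha))^3},$$
so convexity $g_1''\ge 0$ is equivalent to the ratio $F'/G'$ being nondecreasing on $[1,\infty)$. With the explicit form $G'(\alpha)=\sqrt{\gamma}\,\varrho_-^\epsilon \alpha^{\epsilon-1}$, this amounts to showing that $\alpha^{1-\epsilon}F'(\alpha)$ is nondecreasing, which I would verify by differentiating once more and reducing to a polynomial non-negativity statement in $\alpha$. The main obstacle is precisely this convexity step: the inequality $F''G'\ge F'G''$ is tight at $\alpha=1$, so a careful Taylor expansion is required locally to rule out a sign change, while the global estimate is handled by a monotonicity argument (for example after the substitution $\beta=\alpha^\epsilon$, which turns $G'$ into a monomial). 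The $S_2$ claims then follow from the same computations applied to the analogous parametrization on $(0,1]$.
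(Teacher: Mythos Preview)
The paper does not give a proof of this lemma at all: it is quoted verbatim from Nishida--Smoller \cite{Nisi-Smo} and only used as a black box (see the remark immediately following the statement and the proofs of Lemmas~\ref{lemma:position} and~\ref{lemma:estimate-1}). So there is no ``paper's own proof'' to compare against; your proposal is effectively a sketch of the original Nishida--Smoller argument, and the parametric set-up via
\[
g_1' = \frac{F'-G'}{F'+G'},\qquad g_1'' = \frac{2(F''G'-F'G'')}{(F'+G')^3}
\]
is exactly the right reduction.

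That said, as written your proposal is an outline rather than a proof. The two substantive inequalities --- $F'(\alpha)\ge G'(\alpha)$ on $[1,\infty)$ for the lower bound $g_1'\ge 0$, and monotonicity of $F'/G'$ for the convexity $g_1''\ge 0$ --- are precisely the content of the lemma, and you defer both to ``I would establish by factoring'' and ``I would verify by differentiating once more''. The local Taylor expansion at $\alpha=1$ that you mention handles only a neighbourhood of the base point; to complete the argument you must actually carry out the global polynomial estimates (in Nishida--Smoller this is done by explicit computation after a change of variable). Until those two steps are filled in, the proof is incomplete.
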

\begin{remarka}
Now we know that for a shock wave  with left state $(r_-,s_-)$  and right state $(r_+,s_+)$, we have for example if we consider a 1 shock
$|r_--r_+|=|\gamma|$ and $|s_--s_+|\leq |\gamma|$ using (\ref{47}). In particular it implies that:
$$|u_--u_+|\leq \frac{1}{2}(|r_--r_+|+|s_--s_+|)\leq |\gamma|.$$
We have a similar result for a 2 shock wave.\label{remimp}
\end{remarka}

We denote $\Omega_{I}[r_0,s_0],\Omega_{II}[r_0,s_0],\Omega_{III}[r_0,s_0],\Omega_{IV}[r_0,s_0]$ as follows (see Figure \ref{fig-1b} for a demonstration)
\begin{align}
\Omega_{I}[r_0,s_0]&:=\left\{(r,s);\,r\leq r_0,s\leq s_0\mbox{ and }r_0-g_2(s_0-s,\varrho_0)\geq r,s_0-g_1(r_0-r,\varrho_0)\geq s\right\},\nonumber\\
\Omega_{II}[r_0,s_0]&:=\left\{(r,s);\,r\leq r_0,s\leq s_0\mbox{ and }r_0-g_2(s_0-s,\varrho_0)\leq r\right\}\cup\left\{(r,s);\,r\geq r_0,s\leq s_0 \right\},\nonumber\\
\Omega_{III}[r_0,s_0]&:=\left\{(r,s);\,r\geq r_0,s\geq s_0\right\},\nonumber\\
\Omega_{IV}[r_0,s_0]&:=\left\{(r,s);\,r\leq r_0,s\leq s_0\mbox{ and }s_0-g_1(r_0-r,\varrho_0)\leq s\right\}\cup\left\{(r,s);\,r\leq r_0,s\geq s_0 \right\}.\nonumber
\end{align}

	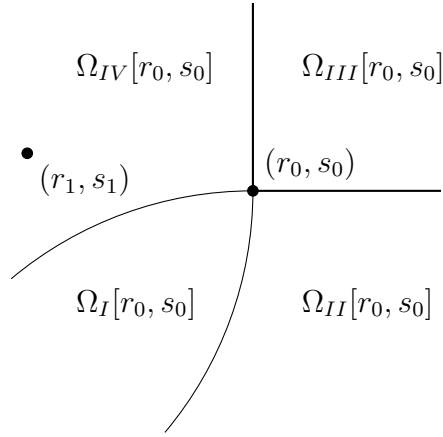
\begin{figure}[ht]
	\centering
	\begin{tikzpicture}
	\draw (2.5,4) arc (90:130:5cm);
	\draw (2.5,4) arc (0:-40:5cm);
	
	\draw[thick] (2.5,4)--(2.5,6.5);
	\draw[thick] (2.5,4)--(5,4);
  	\filldraw (-0.5,4.5) circle (2pt);
  	\filldraw (2.5,4) circle (2pt);
	\draw[thick] (3,6) node[anchor=north west] {$\Omega_{III}[r_0,s_0]$};
	\draw[thick] (3,2.5) node[anchor=west] {$\Omega_{II}[r_0,s_0]$};
	\draw[thick] (0,6) node[anchor=north west] {$\Omega_{IV}[r_0,s_0]$};
	\draw[thick] (0,2.5) node[anchor=west] {$\Omega_{I}[r_0,s_0]$};
	\draw[thick] (2.5,4) node[anchor=south west] {$(r_0,s_0)$};
	\draw[thick] (-0.5,4.5) node[anchor=north west] {$(r_1,s_1)$};	

	\end{tikzpicture}
	
	\caption{For a point $(r_0,s_0)$, the $r$--$s$ plane is divided into four disjoint subsets, $\Omega_{I}[r_0,s_0],\,\Omega_{II}[r_0,s_0],\,\Omega_{III}[r_0,s_0]$ and $\Omega_{IV}[r_0,s_0]$. Here the point $(r_1,s_1)$ belongs to $\Omega_{IV}[r_0,s_0]$. For a Riemann data $U_l,U_r$ with $U_l=(r_0,s_0),\,U_r=(r_1,s_1)$, the admissible solution consists 1-rarefaction and 2-shock. } \label{fig-1b}
\end{figure}
\begin{remarka} It is important to note (see \cite{Smoller}) that if $(r_1,s_1)\in \Omega_{I}[r_0,s_0]$ and that $(r_0,s_0)$, $(r_1,s_1)$ satisfy the assumptions of Lemma \ref{lemma:Euler-1}, then the Riemann problem issue of the left state $(r_0,s_0)$ and right state $(r_1,s_1)$ is described by three different state which are relied by a 1 shock and a 2 shock. Similarly if  $(r_1,s_1)\in \Omega_{II}[r_0,s_0]$, then the solution of the Riemann problem is the composition of 1 rarefaction and a 2 shock. We have similar properties when $(r_1,s_1)$ is respectively in  $\Omega_{III}[r_0,s_0]$ or
$\Omega_{IV}[r_0,s_0]$.
\end{remarka}
 We give now a useful Lemma for the sequel which allows to identify the position of translated states in the different regions $\Omega_I,\cdots, \Omega_{IV}$. 
\begin{lemma}\label{lemma:position}
		Let $(r_0,s_0)\in\R^2$ be a point in $r$-$s$ plane. Suppose $(r_0,s_0)$ corresponds to $(\varrho_0,u_0)$. Consider a point $(r_1,s_1)\in\Omega_{I}[r_0,s_0]$ corresponds to $(\varrho_1,u_1)$. Let $\de_0,\de_1>0$ and $\bar{u}_0,\bar{u}_1$ be defined as $\bar{u}_i=u_i+\de_i$ for $i=0,1$. Then we have
		\begin{enumerate}
			\item If $(r_1,s_1)\in\Omega_{I}[r_0,s_0]$, then $(r_1,s_1)\in\Omega_{I}[\bar{r}_0,\bar{s}_0]$. Furthermore, if $\delta_0=\delta_1$, then we have $(\bar{r}_1,\bar{s}_1)\in\Omega_{I}[\bar{r}_0,\bar{s}_0]$.
			
			\item If $(r_1,s_1)\in\Omega_{II}[r_0,s_0]$, then $(\bar{r}_1,\bar{s}_1)\notin\Omega_{I}[r_0,s_0]\cup\Omega_{IV}[r_0,s_0]$ and $(r_1,s_1)\notin\Omega_{III}[\bar{r}_0,\bar{s}_0]\cup \Omega_{IV}[\bar{r}_0,\bar{s}_0]$.
			
			\item If $(r_1,s_1)\in\Omega_{III}[r_0,s_0]$, then $(\bar{r}_1,\bar{s}_1)\in\Omega_{III}[r_0,s_0]$.
			
			\item If $(r_1,s_1)\in\Omega_{IV}[r_0,s_0]$, then $(\bar{r}_1,\bar{s}_1)\notin\Omega_{I}[r_0,s_0]\cup\Omega_{II}[r_0,s_0]$ and $(r_1,s_1)\notin\Omega_{II}[\bar{r}_0,\bar{s}_0]\cup \Omega_{III}[\bar{r}_0,\bar{s}_0]$.
			
		\end{enumerate}
\end{lemma}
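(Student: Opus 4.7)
The plan is to analyze the four cases separately, using that the translation $(r_i, s_i) \mapsto (\bar{r}_i, \bar{s}_i) = (r_i + \delta_i, s_i + \delta_i)$ is diagonal: since $r = u - \sqrt{\gamma}(\varrho^\epsilon - 1)/\epsilon$ and $s = u + \sqrt{\gamma}(\varrho^\epsilon - 1)/\epsilon$, shifting $u$ by $\delta_i$ at fixed $\varrho$ changes both $r$ and $s$ by the same amount $\delta_i$. Throughout, I rely on the structural estimates of Lemma~\ref{lemma:NS}, namely $0 \le g_i' < 1$ strictly and $g_i'' \ge 0$, together with the induced sub-diagonal bound $g_i(\alpha) < \alpha$ for $\alpha > 0$, which is immediate from $g_i(0) = 0$.

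Case~3 is immediate because $\Omega_{III}[r_0, s_0] = \{r \ge r_0, s \ge s_0\}$ is stable under a diagonal translation by a positive vector. For Case~1, the first claim reduces to inequalities of the form $r_1 \le r_0 + \delta_0 - g_2(s_0 + \delta_0 - s_1, \varrho_0)$. Starting from the original $\Omega_I$-inequality $g_2(s_0 - s_1, \varrho_0) \le r_0 - r_1$ and applying the mean value theorem with $g_2' < 1$ to bound $g_2(s_0 + \delta_0 - s_1) - g_2(s_0 - s_1) < \delta_0$, the desired inequality follows; the companion inequality involving $g_1$ is identical. The second claim (with $\delta_0 = \delta_1$) is trivial because then both endpoints are translated by the same diagonal vector.

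Cases~2 and 4 are symmetric under the $r \leftrightarrow s$ duality, so I treat only Case~2. For $(r_1, s_1) \in \Omega_{II}[r_0, s_0]$, I split on $r_1 \ge r_0$ versus $r_1 \le r_0$. If $r_1 \ge r_0$, then $\bar{r}_1 > r_0$, which instantly excludes the translated point from $\Omega_I[r_0, s_0] \cup \Omega_{IV}[r_0, s_0]$; moreover $s_1 \le s_0 < \bar{s}_0$ rules out $\Omega_{III}[\bar{r}_0, \bar{s}_0]$, and the $\Omega_{IV}[\bar{r}_0, \bar{s}_0]$-check reduces to $g_1(r_0 + \delta_0 - r_1) \le g_1(\delta_0) < \delta_0 \le s_0 + \delta_0 - s_1$, using only $g_1' < 1$ together with $r_1 \ge r_0$ and $s_1 \le s_0$.

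The true obstacle is the remaining sub-case $r_1 \le r_0$, $s_1 \le s_0$, $r_0 - g_2(s_0 - s_1, \varrho_0) \le r_1$. Setting $a = r_0 - r_1$ and $b = s_0 - s_1$, membership in $\Omega_{II}$ forces $a \le g_2(b) < b$, so \emph{strictly} $a < b$; this is the crucial inequality. When $\bar{r}_1 \le r_0$, excluding $(\bar{r}_1, \bar{s}_1)$ from $\Omega_I[r_0, s_0]$ is another mean-value estimate on $g_2$ that yields $\bar{r}_1 > r_0 - g_2(s_0 - \bar{s}_1)$. The subtlest point is excluding $(\bar{r}_1, \bar{s}_1)$ from $\Omega_{IV}[r_0, s_0]$: one needs $b - \delta_1 > g_1(a - \delta_1)$, which follows from the chain $g_1(a - \delta_1) \le a - \delta_1 < b - \delta_1$ and rests entirely on $a < b$ being strict, hence ultimately on the strict bound $g_2' < 1$. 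The non-memberships of $(r_1, s_1)$ in $\Omega_{III}[\bar{r}_0, \bar{s}_0]$ and $\Omega_{IV}[\bar{r}_0, \bar{s}_0]$ are then checked by the analogous sub-diagonal estimate $g_1(a + \delta_0) < a + \delta_0 \le b + \delta_0$, closing the proof.
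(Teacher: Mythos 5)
Your proof is correct and follows essentially the same route as the paper's: in both, the whole argument rests on the bound $0\le g_i'<1$ from Lemma \ref{lemma:NS}, applied through a mean-value comparison of the shock curves before and after the diagonal translation $(r,s)\mapsto(r+\de,s+\de)$ (which leaves $\varrho$, hence the curves $g_i(\cdot,\varrho_0)$, unchanged). The paper only writes out Case 1 in detail and declares that the rest "follows the same line," whereas you supply the missing details for Cases 2--4 via the sub-diagonal estimate $g_i(\alpha)<\alpha$ and the $r\leftrightarrow s$ symmetry, but the underlying method is identical.
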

 \begin{proof} We just prove here the first case. Assume that $(r_1,s_1)\in \Omega_{I}[r_0,s_0]$, it means in particular that:
 $$r_1\leq r_0\;\;\mbox{and}\;\;r_1\leq r_0-g_2(s_0-s_1,\varrho_0).$$
 From the definition of $\bar{r}_0$, $r_1\leq \bar{r}_0$. Now we write,
 $$r_1\leq r_0+\delta_0-g_2(s_0+\delta_0-s_1,\varrho_0)+(g_2(s_0+\delta_0-s_1,\varrho_0)-g_2(s_0-s_1,\varrho_0)-\delta_0).$$
 Using the fact that $0\leq g'_2(\cdot,\varrho_0)<1$ we deduce that $g_2(s_0+\delta_0-s_1,\varrho_0)-g_2(s_0-s_1,\varrho_0)-\delta_0\leq 0$ and that:
 $$r_1\leq r_0+\delta_0-g_2(s_0+\delta_0-s_1,\varrho_0).$$
%
%

 In a similar way we prove that $s_1\leq \bar{s}_0$ and $s_1\leq\bar{s}_0-g_1(\bar{r}_0-r_1,\varrho_0)$. 
 Similarly when $\delta_0=\delta_1$, if $(r_1,s_1)\in\Omega_I[r_0,s_0]$ then we have:
 $$r_1\leq r_0\;\;\mbox{and}\;\;r_1\leq r_0-g_2(s_0-s_1,\varrho_0).$$
 We deduce using the fact that $\bar{\varrho}_0=\varrho_0$:
 $$r_1+\delta_1\leq r_0+\delta_0\;\;\mbox{and}\;\;r_1+\delta_1\leq r_0+\delta_0-g_2(s_0+\delta_0-s_1-\delta_1,\bar{\varrho}_0).$$
 Similarly we have $\bar{s}_1\leq\bar{s}_0-g_1(\bar{r}_0-\bar{r}_1,\bar{\varrho}_0)$.
 The rest of the proof follows the same line.
 \end{proof}

	Now we recall the following result from Nishida-Smoller, \cite{Nisi-Smo}. This plays a crucial role in controlling Glimm functional for large data.  
\begin{lemma}[Nishida-Smoller, \cite{Nisi-Smo}]
	Let $0\leq \e<1/2,s_->s'_-$ and $\varrho_-,\varrho_-^\p\in[\underline{\varrho},\overline{\varrho}]$ with $0<\underline{\varrho}<\overline{\varrho}<\f$. Suppose  $(r_+,s_+)$ and $(r_+,s_+^\p)$ are two points on $S_1$ curves originating from $(r_-,s_-)$ $(r_-,s_-^\p)$ respectively.  
	Then we get
	\begin{equation}\label{interaction-est-1}
	0\leq (s_-^\p-s_+^\p)-(s_--s_+)\leq C\e(s_--s_-^\p)(r_--r_+)
	\end{equation}
	where $C$ is independent of $\e,\varrho_-,\varrho_-^\p$ and depending on $\underline{\varrho},\overline{\varrho}$. Similar results are true for $S_2$ curve as well.
\end{lemma}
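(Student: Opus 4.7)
The plan is to recast $(s_-^\p - s_+^\p) - (s_- - s_+)$ as the difference of a single function $g_1$, then exploit the monotonicity/convexity information in Lemma~\ref{lemma:NS}. Setting $\beta := r_- - r_+ \ge 0$, I have $s_- - s_+ = g_1(\beta, \varrho_-)$ and $s_-^\p - s_+^\p = g_1(\beta, \varrho_-^\p)$, so the quantity to estimate is $g_1(\beta, \varrho_-^\p) - g_1(\beta, \varrho_-)$. A preliminary step controls $|\varrho_- - \varrho_-^\p|$ by $|s_- - s_-^\p|$: since both left states share the same $r_-$ coordinate, the definition of the Riemann invariants gives $\varrho_-^\e - (\varrho_-^\p)^\e = \e(s_- - s_-^\p)/(2\sqrt{\gamma})$, hence by the mean value theorem and the $L^\infty$ bounds on $\varrho$, $|\varrho_- - \varrho_-^\p| \le C(\underline{\varrho},\overline{\varrho})|s_- - s_-^\p|$, and in particular $\varrho_- > \varrho_-^\p$ since $s_- > s_-^\p$.

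The crucial observation is a scaling structure. Writing $\alpha = \varrho/\varrho_-$, the $S_1$ parametrization reads $r_- - r = \varrho_-^\e h_1(\alpha)$ and $s_- - s = \varrho_-^\e h_2(\alpha)$, with $h_1,h_2$ depending only on $\gamma$. Inverting $h_1$ yields $g_1(\beta,\varrho) = \varrho^\e H(\beta/\varrho^\e)$ with $H := h_2 \circ h_1^{-1}$. Direct differentiation produces
\[
\partial_\varrho g_1(\beta,\varrho) \;=\; \e\,\varrho^{\e-1}\bigl(H(\tau) - \tau H'(\tau)\bigr), \qquad \tau := \beta/\varrho^\e,
\]
so the factor $\e$ on the right-hand side of~\eqref{interaction-est-1} is produced for free by the differentiation of $\varrho^\e$. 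Translated through $\tau = \beta/\varrho^\e$, Lemma~\ref{lemma:NS} becomes $0 \le H'(\tau) < 1$ and $H''(\tau) \ge 0$, with $H(0)=0$ by construction.

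Both inequalities in~\eqref{interaction-est-1} now follow from these properties of $H$. Convexity of $H$ together with $H(0)=0$ forces $\tau \mapsto H(\tau)/\tau$ to be non-decreasing, equivalently $H(\tau) - \tau H'(\tau) \le 0$; hence $\partial_\varrho g_1 \le 0$, and since $\varrho_-^\p < \varrho_-$ this yields $g_1(\beta,\varrho_-^\p) \ge g_1(\beta,\varrho_-)$, which is the lower bound. For the upper bound, the same sign combined with $H \ge 0$ and $H' < 1$ gives
\[
|\partial_\varrho g_1(\beta,\varrho)| \;=\; \e\,\varrho^{\e-1}\bigl(\tau H'(\tau) - H(\tau)\bigr) \;\le\; \e\,\varrho^{\e-1}\tau \;=\; \e\,\beta/\varrho \;\le\; C\e\,\beta,
\]
and integrating this in $\varrho$ from $\varrho_-^\p$ to $\varrho_-$ and invoking the preliminary bound on $|\varrho_- - \varrho_-^\p|$ yields $0 \le g_1(\beta,\varrho_-^\p) - g_1(\beta,\varrho_-) \le C \e\,(s_- - s_-^\p)(r_- - r_+)$ as required. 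The analogous statement for $S_2$ follows by exchanging $r$ and $s$ via the symmetry of the system. The main obstacle I anticipate is executing the change of variables $\tau = \beta/\varrho^\e$ carefully enough that the resulting constant $C$ depends only on $\underline{\varrho}$ and $\overline{\varrho}$, and not on $\e$, the shock strength, or the particular left state; once this bookkeeping is in place, the geometric content of Lemma~\ref{lemma:NS} (convexity for the sign, $H' < 1$ for the size) delivers both inequalities simultaneously.
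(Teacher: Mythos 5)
Your proof is correct. Note that the paper itself gives no proof of this lemma --- it is quoted verbatim from Nishida--Smoller \cite{Nisi-Smo} --- so the only meaningful comparison is with the original argument, and yours is essentially a clean reconstruction of it: the whole content is the self-similar structure $g_1(\beta,\varrho)=\varrho^\e H(\beta\varrho^{-\e})$ visible in the paper's parametrization of $S_1$, which makes $\partial_\varrho g_1=\e\varrho^{\e-1}\bigl(H(\tau)-\tau H'(\tau)\bigr)$ nonpositive by convexity of $H$ with $H(0)=0$ (lower bound) and of size $\e\beta/\varrho$ by $0\le H'<1$ (upper bound), after which the identity $s_--s_-'=\tfrac{2\sqrt{\ga}}{\e}\bigl(\varrho_-^\e-(\varrho_-')^\e\bigr)$ converts $\varrho_--\varrho_-'$ into $C(\underline{\varrho},\overline{\varrho})(s_--s_-')$ with no residual $\e$. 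All the ingredients you invoke ($0\le H'<1$, $H''\ge 0$) are exactly the content of Lemma \ref{lemma:NS}, so the argument is self-contained given what the paper records.
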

In the sequel we denote by $S_1^\p$ and $S_2^\p$ the non physical inverse shock wave curves, $S_2^\p$ consists of those states $(r,s)$ which can be connected to the state $(r_0,s_0)$ on the right by an $S_2$ shock. In particular as previously (see \cite{Nisi-Smo}), we can represent $S_2^\p$ by the following equation:
\begin{equation}
r-r_0=g_1(s-s_0,\varrho_0)\;\;\mbox{for}\;s>s_0.
\label{S2non}
\end{equation}
Similarly we have for   $S_1^\p$:
\begin{equation}
s-s_0=g_2(r-r_0,\varrho_0)\;\;\mbox{for}\;r>r_0.
\label{S1non}
\end{equation}
\section{Proof of the Theorem \ref{theorem-1}}\label{sec4}
\subsection{New estimates on the wave interactions}\label{sec:new-est}

\begin{lemma}\label{lemma:estimate-1}
	Let $(r_0,s_0)\in\R^2$ be a point in $r$-$s$ plane. Consider a point $(r_1,s_1)\in\Omega_{I}[r_0,s_0]$. Let $\de>0$ and $(r_2,s_2)$ be defined as $(r_2,s_2)=(r_0+\de,s_0+\de)$. Let $\B+\gamma$ be the outgoing wave from Riemann data $U_l=(r_0,s_0)$ and $U_r=(r_1,s_1)$ and $\B^\p+\gamma^\p$ be the outgoing wave from Riemann data $U_l=(r_2,s_2)$ and $U_r=(r_1,s_1)$. Then we have
	\begin{equation}
	\abs{\B^\p}\leq \abs{\B}+\de\;\;\mbox{and}\;\;\abs{\gamma^\p}\leq \abs{\gamma}+\de.
	\end{equation} 
\end{lemma}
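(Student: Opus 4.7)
The plan is to rephrase the claim as a geometric statement about the intersection of two Lax curves in the $(r,s)$-plane and to attack it with the intermediate value theorem together with the slope bounds of Lemma \ref{lemma:NS}. First I would note that, since $(r_1,s_1)\in\Omega_I[r_0,s_0]$, the Riemann solution for $U_l=(r_0,s_0),\,U_r=(r_1,s_1)$ consists of a 1-shock of strength $\B=r_0-r_m\geq 0$ followed by a 2-shock of strength $\gamma=s_m-s_1\geq 0$ through an intermediate state $(r_m,s_m)$. Applying the first part of Lemma \ref{lemma:position} with $\de_0=\de$ and $\de_1=0$, the translated problem with left state $(r_2,s_2)$ still has $(r_1,s_1)\in\Omega_I[r_2,s_2]$, so its solution also has the 1-shock$+$2-shock structure with intermediate state $(r_m^\p,s_m^\p)$ and nonnegative strengths $\B^\p=r_0+\de-r_m^\p$ and $\gamma^\p=s_m^\p-s_1$; in particular the absolute values in the conclusion can be dropped.

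The key geometric input is that $r_2-s_2=r_0-s_0$, so the density associated to $(r_2,s_2)$ coincides with $\varrho_0$. This forces the $S_1$-curve $\tilde{C}_1$ issued from $(r_2,s_2)$ to be the exact translate by $(\de,\de)$ of the original $S_1$-curve $C_1$ from $(r_0,s_0)$, while the non-physical inverse $S_2^\p$-curve $C_2$ arriving at $(r_1,s_1)$ -- represented via \eqref{S2non} as $r-r_1=g_1(s-s_1,\varrho_1)$ -- is unaffected by the translation. Writing both curves locally as graphs $s=\phi_1(r)$ on $C_1$ and $s=\phi_2(r)$ on $C_2$, Lemma \ref{lemma:NS} yields the slope bounds $\phi_1'(r)=g_1'(r_0-r,\varrho_0)\in[0,1)$ and $\phi_2'(r)=1/g_1'(\phi_2(r)-s_1,\varrho_1)>1$. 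The two intermediate states are then characterized by $\phi_1(r_m)=\phi_2(r_m)$ and by $\phi_1(r_m^\p-\de)+\de=\phi_2(r_m^\p)$.

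From here I would estimate $\B^\p$ by studying the auxiliary function $F(r):=\phi_2(r)-\phi_1(r-\de)-\de$, which vanishes at $r_m^\p$. A mean value argument using $\phi_1'<1$ gives $F(r_m)<0$, and one using $\phi_2'>1$ gives $F(r_m+\de)>0$, so the intermediate value theorem places $r_m^\p$ in the interval $(r_m,r_m+\de)$; this rearranges to $\B<\B^\p<\B+\de$. The bound on $\gamma^\p$ is obtained by the symmetric argument in which both curves are parametrized by $s$ instead of $r$: on $C_1$ one writes $r=\psi_1(s)$ with $\psi_1'=1/g_1'>1$ and on $C_2$ one writes $r=\psi_2(s)=r_1+g_1(s-s_1,\varrho_1)$ with $\psi_2'<1$, and one analyzes the corresponding auxiliary function $G(s):=\psi_2(s)-\psi_1(s-\de)-\de$ at the endpoints $s=s_m$ and $s=s_m+\de$ to conclude $s_m<s_m^\p<s_m+\de$, whence $\gamma<\gamma^\p<\gamma+\de$.

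The only subtle point worth flagging is identifying the correct parametric representation of $C_2$: the non-physical $S_2^\p$-curve is described by the function $g_1$ (with base density $\varrho_1$) rather than by $g_2$, via \eqref{S2non}, and it is precisely this representation that makes the uniform slope bounds $\phi_1'<1<\phi_2'$ (equivalently $\psi_2'<1<\psi_1'$) follow immediately from Lemma \ref{lemma:NS}. Once this observation is in place, the two IVT computations are completely symmetric and require no further estimates.
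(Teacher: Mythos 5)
Your proof is correct and follows essentially the same route as the paper's: both rest on the observation that $\varrho_2=\varrho_0$ makes the $S_1$ curve from $(r_2,s_2)$ the $(\de,\de)$-translate of the one from $(r_0,s_0)$, together with the slope bounds $0\le g_1'<1$ of Lemma \ref{lemma:NS} applied to the representations $s=s_0-g_1(r_0-r,\varrho_0)$ and $r-r_1=g_1(s-s_1,\varrho_1)$ from \eqref{S2non}; your intermediate-value packaging with the auxiliary functions $F$ and $G$ is just a cleaner rewording of the paper's argument locating $(r_4,s_4)$ relative to $(r_3,s_3)$ and $(\bar r,\bar s)$. The only points to tidy are that $g_1'$ may vanish (e.g.\ at the base point), so $\phi_2'=1/g_1'$ and $\psi_1'=1/g_1'$ need not be finite everywhere --- harmless, since the mean-value steps really only need the secant inequalities $|\Delta s|<|\Delta r|$ along $S_1$ and $|\Delta r|<|\Delta s|$ along $S_2'$ --- and that identifying the IVT zero with the intermediate state uses the strict monotonicity of $F$ and $G$, which follows from those same slope bounds.
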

 \begin{remarka}
Due to $2\sqrt{\gamma}\frac{\varrho^\e-1}{\e}=s-r$ we have in the previous Lemma that $\varrho_2=\varrho_0$.
\end{remarka}

   \begin{proof}[Proof of Lemma \ref{lemma:estimate-1}:]
   Using the Lemma \ref{lemma:position}, we know that the Riemann problems associated to the states  $[(r_0,s_0),(r_1,s_1)]$ and $[(r_2,s_2),(r_1,s_1)]$ are solved via a 1 shock and a 2 shock.
	Let the $S_2^\p$ curve from $(r_1,s_1)$ and the $S_1$ curve from $(r_0,s_0)$ intersect at $(r_3,s_3)$. Let the $S_2^\p$ curve from $(r_1,s_1)$ and the $S_1$ curve from $(r_2,s_2)$ intersect at $(r_4,s_4)$. Note that $(r_3,s_3),(r_4,s_4)$ lie on the curve $r-r_1=g_1(s-s_1,\varrho_1)$. Suppose the line $L_1=\{(r_3+d,s_3+d);\,d\geq0\}$ intersects $S_1$ curve from $(r_2,s_2)$ at $(\bar{r},\bar{s})$. As $S_1$ curve from $(r_2,s_2)$ is represented by $s-s_2=g_1(r-r_2,\varrho_0)$ we have  for $r\leq \bar{r}$ since $g^{\p}_1<1$:
$$
\begin{aligned}
s-s_2&=\bar{s}-s_2-\int^{\bar{r}-r_2}_{r-r_2}g'_1(\beta,\varrho_0) d\beta,\\
&s-\bar{s}\geq r-\bar{r}.
\end{aligned}
$$
It implies that the curve $s-s_2=g_1(r-r_2,\varrho_0)$ lies in $\{(r,s);s-\bar{s}>r-\bar{r},r\leq \bar{r},s\leq\bar{s}\}$ .  Now if $s_4<s_3$ it implies necessary that $r_4\leq r_3$ since $(r_4,s_4)$ and $(r_3,s_3)$ are on the curve $r-r_1=g_1(s-s_1,\varrho_1)$ with $s\geq s_1$ and $g'_1(\cdot,\varrho_1)\geq 0$, we can prove again since $g'_1<1$ that $(r_3,s_3)$ lies in $\{(r,s);s-s_4>r-r_4, s\geq s_4, r\geq r_4\}$ and it is not possible since $(r_4,s_4)$ is in  $\{(r,s);s-\bar{s}>r-\bar{r},r\leq\bar{r}, s\leq\bar{s}\}$. Therefore we have $s_4>s_3$. Since $0\leq g_1^\p$ and $(r_3,s_3)$, $(r_4,s_4)$ are on the curve $S_2^\p$ and $s_4>s_3$, we have $r_3\leq r_4$ (see Figure \ref{fig-1a} for clear illustration). Hence we have:
$$|\beta'|=r_2-r_4=r_0-r_3+\delta+r_3-r_4\leq |\beta|+\delta.$$
  Next, we observe that $\bar{r}=r_3+\de,\bar{s}=s_3+\de$ since $S_1$ curve remains unchanged under translation along $\varrho=\varrho_0$ line (and we have $\varrho_2=\varrho_0$). Since $(r_4,s_4)\in\{(r,s);s-\bar{s}>r-\bar{r},r\leq \bar{r},s\leq\bar{s}\}$ we have $s_4\leq \bar{s}=s_3+\de$. Therefore, we obtain
	$$
	\abs{\gamma^\p}\leq \abs{\gamma}+\de.
	$$
\end{proof}
\begin{figure}[ht]
		\centering
		\begin{tikzpicture}
		\draw (-1.2,1.5) arc (180:150:9cm);
		
		\draw (2.5,4) arc (90:110:12.5cm);
		\draw (3.5,6) arc (90:115:12cm);
		
		\draw[thick] (2.5,4)--(3.5,6);
		\draw[thick] (-0.99,3.5)--(.59,6.55);
		
		\draw[thick,color=blue] (-0.99,3.5)--(-0.99,2.7);
		
		\draw[dashed,<->] (-0.99,3.3)--(2.5,3.3);
		\draw (1,3.3) node[fill=white, inner sep=1pt](N){$\abs{\B}$};	
		
		\draw[dashed,<->] (2.5,3)--(3.5,3);
		\draw (3,3) node[fill=white, inner sep=1pt](N){\small$\de$};	
		
		\draw[dashed,<->] (-0.32,7)--(3.5,7);
		\draw (1.5,7) node[fill=white, inner sep=1pt](N){$\abs{\B^\p}$};	
		
		\draw[dashed,<->] (-3.5,3.5)--(-3.5,5.5);
		\draw (-3.5,4.5) node[fill=white, inner sep=1pt](N){$\de$};	
		
		\draw[dashed,<->] (-2,1.5)--(-2,3.5);
		\draw (-2,2.5) node[fill=white, inner sep=1pt](N){$\abs{\gamma}$};	
		
     	\draw[dashed,<->] (-2.8,1.5)--(-2.8,5.38);
        \draw (-2.8,4.5) node[fill=white, inner sep=1pt](N){$\abs{\gamma^\p}$};	
		
		\draw[thick,color=blue] (0.075,5.5)--(0.075,2.7);
		
		\draw[thick,color=blue] (2.5,4)--(2.5,2.7);
		\draw[thick,color=blue] (3.5,7.5)--(3.5,6)--(3.5,2.7);
		\draw[thick,color=blue] (-0.99,3.5)--(-4,3.5);
		\draw[thick,color=blue] (-1.2,1.5)--(-4,1.5);
		
		\draw[thick,color=blue] (2.5,4)--(4,4);
		\draw[thick,color=blue] (3.5,6)--(4,6);
		
		\draw[dashed,<->] (3.8,4)--(3.8,6);
		\draw (3.8,5) node[fill=white, inner sep=1pt](N){$\de$};	
		
			\draw[thick] (2.4,3.8) node[anchor=west] {\tiny$(r_0,s_0)$};
				\draw[thick] (3.5,6.2) node[anchor=west] {\tiny$(r_2,s_2)$};
				\draw[thick] (-1.2,1.5) node[anchor=west] {\tiny$(r_1,s_1)$};
				
				\draw[thick] (-0.83,3.45) node[anchor=south east] {\tiny$(r_3,s_3)$};
				\draw[thick] (0.07,5.65) node[anchor=north west] {\tiny$(\bar{r},\bar{s})$};


		\draw[thick,color=blue] (-0.32,7.5)--(-0.32,5.38)--(-4,5.38);
		\draw[thick,color=blue] (0.1,5.5)--(-4,5.5);
		\draw (-0.5,5.3) node[fill=white, inner sep=0.3pt,anchor=north east] {\tiny$({r}_4,{s}_4)$};
		\end{tikzpicture}
		
		\caption{The $S_1$ curve starting from $(r_0,s_0)$ intersects the $S^\p_2$ curve from $(r_1,s_1)$ at the point $(r_3,s_3)$. Also, the $S_1$ curve starting from $(r_2,s_2)$ intersects the $S^\p_2$ curve from $(r_1,s_1)$ at the point $(r_4,s_4)$. Here, $r_0-r_3=\abs{\B},\,r_2-r_4=\abs{\B^\p}$ and $s_3-s_1=\abs{\ga},\,s_4-s_1=\abs{\ga^\p}$. In this figure, $(\bar{r},\bar{s})=(r_3+\de,s_3+\de)$.} \label{fig-1a}
	\end{figure}
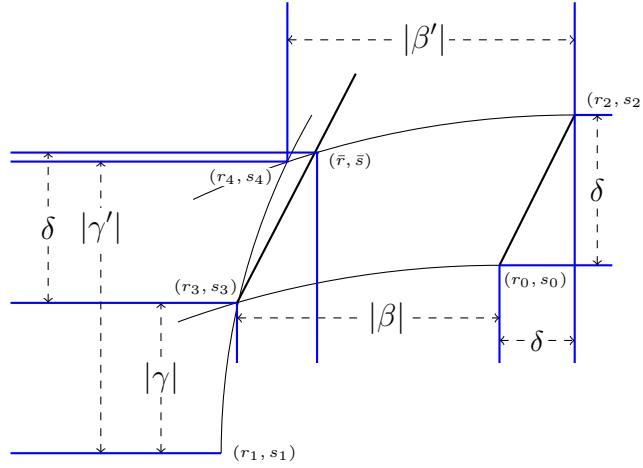
	
By a similar argument we get 
\begin{lemma}\label{lemma:estimate-2}
	Let $(r_0,s_0)\in\R^2$ be a point in $r$-$s$ plane. Consider a point $(r_1,s_1)\in\Omega_{I}[r_0,s_0]$. Let $\de>0$ and $(r_2,s_2)$ be defined as $(r_2,s_2)=(r_1+\de,s_1+\de)$ with $\delta>0$ such that $(r_2,s_2)\in\Omega_{I}[(r_0,s_0)]$. Let $\B+\gamma$ be the outgoing wave from Riemann data $U_l=(r_0,s_0)$ and $U_r=(r_1,s_1)$ and $\B^\p+\gamma^\p$ be the outgoing wave from Riemann data $U_l=(r_0,s_0)$ and $U_r=(r_2,s_2)$. Then we have
	\begin{equation}
	\abs{\B^\p}\leq \abs{\B}+\de\;\;\mbox{and}\;\;\abs{\gamma^\p}\leq \abs{\gamma}+\de.
	\end{equation} 
\end{lemma}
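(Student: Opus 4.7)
My approach will mirror the proof of Lemma~\ref{lemma:estimate-1}, but with the roles of the two Lax curves exchanged: here both Riemann problems share the same left state $(r_0, s_0)$, hence the same $S_1$ shock curve, while the back-shock curve $S_2'$ issued from the right state is what gets translated. The first observation will be that, since the Riemann invariants satisfy $s - r = \frac{2\sqrt{\gamma}}{\e}(\varrho^{\e} - 1)$, the equal translations of $r$ and $s$ by $\delta$ preserve the density, so $\varrho_2 = \varrho_1$. Combined with the representation $r - r_1 = g_1(s - s_1, \varrho_1)$ of $S_2'$ recalled in \eqref{S2non}, this implies that $\tilde{S}_2'$, the $S_2'$ curve issued from $(r_2, s_2)$, is exactly the translate by $(\delta, \delta)$ of the $S_2'$ curve from $(r_1, s_1)$. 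By the hypothesis $(r_2, s_2) \in \Omega_I[(r_0, s_0)]$, both Riemann problems are resolved by a $1$-shock followed by a $2$-shock, and I will denote the intermediate states by $(r_3, s_3) = S_1 \cap S_2'$ and $(r_4, s_4) = S_1 \cap \tilde{S}_2'$, so that $|\beta| = r_0 - r_3$, $|\gamma| = s_3 - s_1$, $|\beta'| = r_0 - r_4$ and $|\gamma'| = s_4 - s_2$.

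The heart of the argument will be a geometric comparison based on the bounds $0 \leq g_1' < 1$ from Lemma~\ref{lemma:NS}: parameterizing $S_1$ as $s = F(r) := s_0 - g_1(r_0 - r, \varrho_0)$, its slope $F'$ is strictly less than $1$, while along $\tilde{S}_2'$ the inverse slope $dr/ds = g_1'$ is strictly less than $1$, so $\tilde{S}_2'$ is strictly steeper than $S_1$. The translation property immediately places $(r_3 + \delta, s_3 + \delta)$ on $\tilde{S}_2'$, and the slope bound for $S_1$ gives $F(r_3 + \delta) \leq F(r_3) + \delta = s_3 + \delta$, so $S_1$ sits at or below $\tilde{S}_2'$ at $r = r_3 + \delta$. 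For the opposite comparison I will split into sub-cases according to whether $s_3 \geq s_2$ or $s_3 < s_2$: in the first sub-case $\tilde{S}_2'$ is defined at $r = r_3$ and the monotonicity and contraction property of $g_1$ place it at or below $S_1$ there; in the second sub-case, equivalent to $|\gamma| < \delta$, the curve $\tilde{S}_2'$ starts at $(r_2, s_2)$ with $r_2 > r_3$, and the hypothesis $(r_2, s_2) \in \Omega_I[(r_0, s_0)]$ directly supplies $F(r_2) \geq s_2$. In either situation, the intersection $(r_4, s_4)$ is trapped in the strip $r_3 \leq r_4 \leq r_3 + \delta$, and applying the slope bound for $F$ once more yields $s_3 \leq s_4 \leq s_3 + \delta$.

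The inequalities of the lemma then follow at once: $|\beta'| = r_0 - r_4 \leq r_0 - r_3 = |\beta| \leq |\beta| + \delta$, and $|\gamma'| = s_4 - s_2 \leq (s_3 + \delta) - (s_1 + \delta) = |\gamma| \leq |\gamma| + \delta$. In fact the argument yields the sharper bounds without the $+\delta$, but the stated form is exactly what will be needed in the Glimm-functional estimates. The main technical obstacle is the sub-case $|\gamma| < \delta$, in which the translation pushes $\tilde{S}_2'$ past the original intersection point and one loses the convenient abscissa $r = r_3$; closing the comparison there forces me to invoke the full strength of the assumption $(r_2, s_2) \in \Omega_I[(r_0, s_0)]$, rather than merely $(r_1, s_1) \in \Omega_I[(r_0, s_0)]$, to ensure that $\tilde{S}_2'$ still meets $S_1$ in the expected strip.
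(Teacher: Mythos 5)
The paper gives no separate proof of this lemma (it is stated to follow ``by a similar argument'' from Lemma \ref{lemma:estimate-1}), and your proof is precisely that mirrored argument: translation-invariance of the back-shock curve $S_2'$ along the diagonal (valid because $\varrho_2=\varrho_1$) combined with the bounds $0\le g_1'<1$ of Lemma \ref{lemma:NS} to trap the new intermediate state $(r_4,s_4)$ in $[r_3,r_3+\delta]\times[s_3,s_3+\delta]$. The argument is correct — it even yields the sharper bounds $|\beta'|\le|\beta|$ and $|\gamma'|\le|\gamma|$, of which the stated inequalities are a weakening — so it takes essentially the same approach as the paper.
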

\begin{lemma}\label{lemma:estimate-3}
Let $(\delta_-,\delta_+)\in\mathbb{R}^2$
and $(r_+,s_+),(r_-,s_-)\in\R^2$  satisfying the assumptions of Lemma \ref{lemma:Euler-1} such that we can get a Riemann problem solution with $U_l=(r_-,s_-)=(\varrho_-,u_-),U_r=(r_+,s_+)=(\varrho_+,u_+)$. Let $u^\p_\pm=u_\pm+\de_\pm$ and $(r_\pm^\p,s_\pm^\p)$ be Riemann invariant corresponding to $(\varrho_\pm,u^\p_\pm)$. We assume again that we can solve the Riemann problem associated to $(r_\pm^\p,s_\pm^\p)$. Then we have the following. 
	\begin{enumerate}[label=\arabic*.]
		\item When $(r_+^\p,s_+^\p)\in\Omega_{I}[r_-^\p,s_-^\p]$,
		\begin{enumerate}[label=1.\arabic*]
			\item\label{1.1} If $(r_+,s_+)\in\Omega_{I}[r_-,s_-]$ then 
			\begin{equation*}
			\abs{\B^\p}\leq \abs{\B}+\abs{\de_+-\de_-}\;\;\mbox{and}\;\;\abs{\gamma^\p}\leq \abs{\gamma}+\abs{\de_+-\de_-}.
			\end{equation*}
			\item\label{1.4} If $(r_+,s_+)\in\Omega_{IV}[r_-,s_-]$ then 
			\begin{equation*}
			\abs{\B^\p}\leq \abs{\B}+\abs{\de_+-\de_-}\;\;\mbox{and}\;\;\abs{\gamma^\p}\leq \abs{\de_+-\de_-}.
			\end{equation*}
			\item If $(r_+,s_+)\in\Omega_{II}[r_-,s_-]$ then 
			\begin{equation*}
			\abs{\gamma^\p}\leq \abs{\gamma}+\abs{\de_+-\de_-}\;\;\mbox{and}\;\;\abs{\B^\p}\leq \abs{\de_+-\de_-}.
			\end{equation*}
			\item If $(r_+,s_+)\in\Omega_{III}[r_-,s_-]$ then 
			\begin{equation*}
			\abs{\B^\p}+\abs{\gamma^\p}\leq 2\abs{\de_+-\de_-}.
			\end{equation*}

		\end{enumerate}

  	\item When $(r_+^\p,s_+^\p)\in\Omega_{II}[r_-^\p,s_-^\p]$,
  \begin{enumerate}[label=2.\arabic*]
  	\item If $(r_+,s_+)\in\Omega_{I}[r_-,s_-]$ then 
  	\begin{equation*}
  	\abs{\gamma^\p}\leq \abs{\B}+\abs{\gamma}+\abs{\de_+-\de_-}.
  	\end{equation*}
  	
  	\item If $(r_+,s_+)\in\Omega_{II}[r_-,s_-]$ then 
  	\begin{equation*}
  	\abs{\gamma^\p}\leq \abs{\gamma}+\abs{\de_+-\de_-}.
  	\end{equation*}
  	\item If $(r_+,s_+)\in\Omega_{III}[r_-,s_-]$ then 
  	\begin{equation*}
  	\abs{\gamma^\p}\leq \abs{\de_+-\de_-}.
  	\end{equation*}
  	
  \end{enumerate}
      	\item When $(r_+^\p,s_+^\p)\in\Omega_{IV}[r_-^\p,s_-^\p]$,
\begin{enumerate}[label=3.\arabic*]
	\item If $(r_+,s_+)\in\Omega_{I}[r_-,s_-]$ then 
	\begin{equation*}
	\abs{\B^\p}\leq \abs{\B}+\abs{\gamma}-\abs{\de_+-\de_-}.
	\end{equation*}
	
	\item If $(r_+,s_+)\in\Omega_{III}[r_-,s_-]$ then 
	\begin{equation*}
	\abs{\B^\p}\leq \abs{\de_+-\de_-}.
	\end{equation*}
	\item If $(r_+,s_+)\in\Omega_{IV}[r_-,s_-]$ then 
	\begin{equation*}
	\abs{\B^\p}= \abs{\B}+\abs{\de_+-\de_-}.
	\end{equation*}
	
\end{enumerate}

	\end{enumerate}
\end{lemma}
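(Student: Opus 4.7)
My plan is to reduce the lemma to a comparison of wave amplitudes under a diagonal translation of the right state in the $(r,s)$ plane, and then run a case‑by‑case geometric analysis using the Lax curve bounds already established. The first step is to exploit the Galilean invariance of the isentropic Euler system \eqref{eqn-Euler-1}--\eqref{eqn-Euler-2}: shifting $u_-$ and $u_+$ by a common constant $c$ preserves the whole wave pattern (in particular all amplitudes), and in $(r,s)$-coordinates is exactly the diagonal translation $(r,s)\mapsto(r+c,s+c)$. Applying this with $c=-\delta_-$ reduces the modified Riemann problem to one with left state $(r_-,s_-)$ (the same as the original) and right state $(r_++\delta,s_++\delta)$, where $\delta:=\delta_+-\delta_-$ may have either sign. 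The lemma then becomes a statement about how the outgoing amplitudes $\beta,\gamma$ change when the right state alone is translated diagonally by $(\delta,\delta)$.

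The second step is a case analysis, guided by Lemma \ref{lemma:position}: the possible pairs of regions $(\Omega_{\rm orig},\Omega_{\rm new})\subset\{\Omega_I,\Omega_{II},\Omega_{III},\Omega_{IV}\}^2$ in which the original right state and its diagonal translate can simultaneously sit (relative to a common left state) are precisely the ten enumerated in the statement. For each such pair I would identify the structure of both Riemann solutions — which component is a shock and which is a rarefaction — and read off the outgoing amplitudes from the $S_i,R_i$ formulas of Section \ref{sec3}.

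When both right states lie in the same region, the argument is a direct adaptation of Lemmas \ref{lemma:estimate-1} and \ref{lemma:estimate-2}: translating the inverse wave curve from the right state by $(\delta,\delta)$ shifts its intersection with the wave curve issuing from the left state by at most $|\delta|$ in each coordinate, thanks to $0\leq g_i'<1$ from Lemma \ref{lemma:NS}. The same geometric argument covers $\delta<0$, where the waves grow instead of shrink; only $|\delta|$ enters the bound. When the translated state crosses from one region into an adjacent one, the shared boundary is a wave curve emanating from $(r_-,s_-)$ along which exactly one of the two outgoing waves degenerates to zero amplitude. The ``newly emerging'' wave in the modified problem therefore has amplitude bounded by the distance the translated state has advanced past that boundary, which is itself bounded by $|\delta|$ after projecting onto the coordinate transverse to the boundary. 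Summing the contribution of the persistent wave and of the newly created one yields the stated estimates.

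The hard part will be the bookkeeping across the ten sub-cases: in each, one must correctly locate the intermediate state, pick the right amplitude measurement, and invoke the appropriate monotonicity from Lemma \ref{lemma:NS}. The most delicate configurations are those where the region jump is ``large''. For example, the case $\Omega_{III}\to\Omega_I$ has both outgoing waves changing type, giving $|\beta'|+|\gamma'|\leq 2|\delta|$; while the case $\Omega_I\to\Omega_{II}$ requires the translation to carry the right state past an entire shock branch before it enters the rarefaction region, producing the combined bound $|\gamma'|\leq|\beta|+|\gamma|+|\delta|$. A useful consistency check in these borderline cases is that the very hypothesis ``new region is $\Omega_X$'' forces $|\delta|$ to exceed the relevant pre-existing wave amplitude, so that bounds such as $|\gamma'|\leq|\delta|$ remain non-vacuous.
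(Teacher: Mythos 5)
Your proposal is correct and follows essentially the same route as the paper: the paper also normalizes by a common diagonal (Galilean) shift of both states, reducing to a positive translation of the left state when $\de_-\geq\de_+$ (handled by Lemma \ref{lemma:estimate-1}) and of the right state when $\de_+>\de_-$ (handled by Lemma \ref{lemma:estimate-2}), and then runs the same region-by-region analysis, using Lemma \ref{lemma:position} both to identify the admissible region pairs and to discard, in several sub-cases, one of the two signs of $\de_+-\de_-$ as contradictory. The only point you should make explicit is that after your reduction the case $\de=\de_+-\de_-<0$ is \emph{not} a ``reversed'' application of Lemma \ref{lemma:estimate-2} (running that lemma backwards only yields a lower bound on the new amplitudes); it must be recast, via one further diagonal shift, as an upward translation of the left state, i.e.\ as Lemma \ref{lemma:estimate-1} --- which is precisely why the paper keeps the two-sided normalization.
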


\begin{proof}
	We first prove for $(r_+^\p,s_+^\p)\in\Omega_{I}[r_-^\p,s_-^\p]$ and split the proof in four sub-cases depending on the fact that Riemann problem associated to $(r_-,s_-)$ and $(r_+,s_+)$ is solved by combining 1 shock or 1 rarefaction and 2 shock or 2 rarefaction. In order to prove these different cases, we will use a generic notation $(\tilde{r}_-,\tilde{s}_-),(\bar{r}_+,\bar{s}_+)$ for the following.
	\begin{enumerate}
		\item When $\de_-\geq \de_+$ we shift $(r_+^\p,s_+^\p)$ to $(r_+,s_+)$ and $(r_-^\p,s_-^\p)$ is shifted to the point $(\tilde{r}_-,\tilde{s}_-):=(r_-^\p-\de_+,s_-^\p-\de_+)=(r_-+\de_1,s_-+\de_1)$ where $\de_1=\de_--\de_+\geq0$. 
		
		\item When $\de_+> \de_-$ we shift $(r_-^\p,s_-^\p)$ to $(r_-,s_-)$ and $(r_+^\p,s_+^\p)$ is shifted to the point $(\bar{r}_+,\bar{s}_+):=(r_+^\p-\de_-,s_+^\p-\de_-)=(r_++\de_2,s_++\de_2)$ where $\de_2=\de_+-\de_->0$. 
	\end{enumerate}
	
	\begin{enumerate}[label=Case-1.\arabic*]
		\item	We first consider the case when $(r_+,s_+)\in \Omega_{I}[r_-,s_-]$. Then there arise two possibilities.
		\begin{enumerate}
			\item $\de_-\geq\de_+$. From Lemma \ref{lemma:position} and since $\delta_1\geq 0$ we know that  $(r_+,s_+)\in \Omega_{I}(\tilde{r}_-,\tilde{s}_-)$. We suppose now that the $S_1$ curves from $(r_-,s_-),(\tilde{r}_-,\tilde{s}_-)$ intersect with $S_2^\p$ curve from $(r_+,s_+)$ at $(r_1,s_1),(r_2,s_2)$ respectively. Then by Lemma \ref{lemma:estimate-1} we get $\abs{r_2-\tilde{r}_-}\leq \abs{r_1-r_-}+\de_1$ and $\abs{s_2-\tilde{s}_-}\leq \abs{s_1-s_-}+\de_1$. This gives us the required estimate because we have $(\tilde{r}_-,\tilde{s}_-):=(r_-^\p-\de_+,s_-^\p-\de_+)$ and $(r_+,s_+)=(r_+^\p-\de_+,s_+^\p-\de_+)$. For a clear illustration of this case see Figure \ref{fig-1}.
			
			\item Now since $(\bar{r}_+,\bar{s}_+)=(r_+^\p-\de_-,s_+^\p-\de_-)$, $(r_-,s_-)=(r_-^\p-\de_-,s_-^\p-\de_-)$ and $(r_+^\p,s_+^\p)\in\Omega_{I}[r_-^\p,s_-^\p]$ we deduce from Lemma \ref{lemma:position} that $(\bar{r}_+,\bar{s}_+)\in\Omega_I(r_-,s_-)$. Now consider $\de_+>\de_-$. This case can be handled similarly as in case $\de_-\geq\de_+$ and we use Lemma \ref{lemma:estimate-2} instead of Lemma \ref{lemma:estimate-1} to get the required estimate.
		\end{enumerate}
	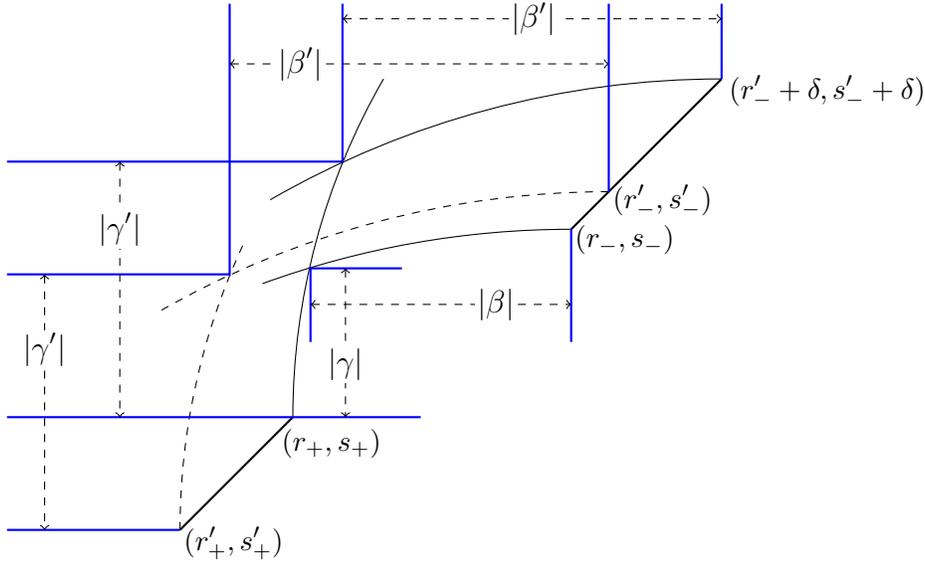
\begin{figure}[ht]
		\centering
		\begin{tikzpicture}
		\draw (-1.2,1.5) arc (180:150:9cm);
		
		\draw (2.5,4) arc (90:110:12cm);
		\draw (4.5,6) arc (90:120:12cm);
		
		\draw[dashed] (3,4.5) arc (90:120:12cm);
		\draw[dashed] (-2.7,0) arc (180:155:9cm);
		
		\draw[thick] (2.5,4)--(4.5,6);
		\draw[thick] (-2.7,0)--(-1.2,1.5);
		
        \draw[thick, color=blue](-5,4.9)--(-0.54,4.9)--(-0.54,7);
        \draw[thick, color=blue](-5,3.4)--(-2.04,3.4)--(-2.04,7);
        
        \draw[thick, color=blue](-5,1.5)--(0.5,1.5);
        \draw[thick, color=blue](-5,0)--(-2.7,0);
        
        \draw[thick, color=blue](3,4.5)--(3,7);
        \draw[thick, color=blue](2.5,4)--(2.5,2.5);
        \draw[thick, color=blue](4.5,6)--(4.5,7);
        
        \draw[thick, color=blue](-0.965,2.5)--(-0.965,3.48)--(0.25,3.48);
        
    	\draw[dashed,<->] (-0.965,3)--(2.5,3);
        \draw (1.5,3) node[fill=white, inner sep=1pt](N){$\abs{\B}$};
        
        \draw[dashed,<->] (-0.5,3.48)--(-0.5,1.5);
        \draw (-0.5,2.2) node[fill=white, inner sep=1pt](N){$\abs{\gamma}$};
        
        \draw[dashed,<->] (-3.5,4.9)--(-3.5,1.5);
        \draw (-3.5,4) node[fill=white, inner sep=1pt](N){$\abs{\gamma^\p}$};
        
        \draw[dashed,<->] (-4.5,3.4)--(-4.5,0);
        \draw (-4.5,2.4) node[fill=white, inner sep=1pt](N){$\abs{\gamma^\p}$};	
        
        \draw[dashed,<->] (-0.54,6.8)--(4.5,6.8);
        \draw (2,6.8) node[fill=white, inner sep=1pt](N){$\abs{\B^\p}$};
        
        \draw[dashed,<->] (-2.04,6.2)--(3,6.2);
        \draw (-1.1,6.2) node[fill=white, inner sep=1pt](N){$\abs{\B^\p}$};
        
       	\draw[thick] (2.4,4.2) node[anchor=north west] {\small$(r_-,s_-)$};
       	
       	\draw[thick] (2.9,4.75) node[anchor=north west] {\small$(r_-^\p,s_-^\p)$};
       	
       	\draw[thick] (4.45,6.2) node[anchor=north west] {\small$(r_-^\p+\de,s_-^\p+\de)$};
       	
       	\draw[thick] (-1.5,1.5) node[anchor=north west] {\small$(r_+,s_+)$};
       	
       	\draw[thick] (-2.8,0.2) node[anchor=north west] {\small$(r_+^\p,s_+^\p)$};

		\end{tikzpicture}
		
		\caption{From the Riemann data $(r_-,s_-)$ and $(r_+,s_+)$ two shocks of strengths $\abs{\B},\abs{\ga}$ arise. Two shocks situation arises even for Riemann data  $(r^\p_-,s^\p_-)$ and $(r^\p_+,s^\p_+)$ where $r^\p_\pm=r_\pm+\de_\pm,\,s^\p_\pm=s_\pm+\de_\pm $ with $\de_->0>\de_+$. In the later case, shock strengths are $\abs{\B^\p},\abs{\ga^\p}$ which remains same even after a translation $(r,s)\mapsto (r+\de,s+\de)$ where $\de=\abs{\de_+}$.}
		\label{fig-1}
	\end{figure}

\item Now we consider the case when $(r_+,s_+)\in \Omega_{IV}[r_-,s_-]$. Then there arise two possibilities.

\begin{enumerate}
	\item $\de_-\geq \de_+$. Since $(r_+^{\p},s_+^\p)\in \Omega_I[r_-^\p,s_-^\p]$ and  $(\tilde{r}_-,\tilde{s}_-)=(r^\p_--\delta_+,s_-^\p-\delta_+)$, $(r_+,s_+)=(r^\p_+-\delta_+,s_+^\p-\delta_+)$ we deduce from Lemma \ref{lemma:position} that $(r_+,s_+)\in\Omega_I[\tilde{r}_-,\tilde{s}_-]$. Since $\de_1:=\de_--\de_+\geq 0$, the point $(r_+,s_+)$ lies between two $S_1$ curves starting from $(r_-,s_-)$ and $(\tilde{r}_-,\tilde{s}_-)$. Hence there exists a $\eta\in(0,\de_1)$ such that we can shift the point $(r_-,s_-)$ to $(r_1,s_1)=(r_-+\eta,s_-+\eta)$ such that $(r_+,s_+)$ lies on the $S_1$ curve of $(r_1,s_1)$. It follows of the transversality of the $S_1$ curves with the segment $[(r_-,s_-),(\tilde{r}_-,\tilde{s}_-)]$. Let $\B_0$ denotes the shock connecting $(r_1,s_1)$ and $(r_+,s_+)$. Then by Lemma \ref{lemma:estimate-1} we obtain $\abs{\gamma^\p}\leq (\de_1-\eta)$
and	$\abs{\B^\p}\leq \abs{\B_0}+(\de_1-\eta)$. Since $\abs{\B_0}=\abs{\B}+\eta$ we get $ \abs{\beta^\p}\leq \abs{\B}+\de_1$. This case is demonstrated in Figure \ref{fig-2}.
	
	\item  $\de_+> \de_-$. Note that by 4) Lemma \ref{lemma:position}, we get $(\bar{r}_+,\bar{s}_+)\notin\Omega_{I}[{r}_-,{s}_-]$. This gives a contradiction since we know that  $(\bar{r}_+,\bar{s}_+)\in\Omega_{I}[{r}_-,{s}_-]$. It comes from the 1) Lemma \ref{lemma:position} and the fact that
$(\bar{r}_+,\bar{s}_+)=(r^\p_+,s^\p_+)-(\de_-,\de_-)$	and $(r_-,s_-)=(r^\p_-,s^\p_-)-(\de_-,\de_-)$ with $(r^\p_+,s^\p_+)\in\Omega_I[r^\p_-,s^\p_-]$.
	
\end{enumerate}

	\begin{figure}[ht]
	\centering
	\begin{tikzpicture}
	\draw (-0.8,4) arc (180:140:3cm);
	
	\draw (1.5,3) arc (90:100:25cm);
	
	\draw[dashed] (2.75,4.25) arc (90:103:25cm);
	
	\draw (4.5,6) arc (90:105:25cm);
	
	
	\draw[thick] (1.5,3)--(4.5,6);
	\draw[thick] (-0.8,4)--(-0.8,2.9);
	
	\filldraw (-0.8,4) circle (2pt);
	\filldraw (1.5,3) circle (2pt);
	\filldraw (4.5,6) circle (2pt);
	\filldraw (2.75,4.25) circle (2pt);

%
%
	\draw[thick, color=blue](-0.8,2.9)--(-0.8,2);
	\draw[thick, color=blue](1.5,3)--(1.5,2);
	\draw[thick, color=blue] (4.5,6)--(4.5,2);
	\draw[thick, color=blue](2.75,4.25)--(2.75,2);
	\draw[thick, color=blue](-0.39,5.52)--(-0.39,4.5);
	\draw[thick, color=blue](-0.39,5.52)--(-4,5.52);
	\draw[thick, color=blue](-0.8,4)--(-4,4);
%
%
	\draw[dashed,<->] (-0.8,2.3)--(1.5,2.3);
	\draw (.3,2.3) node[fill=white, inner sep=1pt](N){$\abs{\B}$};
	\draw[dashed,<->] (1.5,2.75)--(2.75,2.75);
	\draw (2.2,2.75) node[fill=white, inner sep=1pt](N){$\eta$};
	\draw[dashed,<->] (1.5,2.5)--(4.5,2.5);
	\draw (3.5,2.5) node[fill=white, inner sep=1pt](N){$\de$};
	
	\draw[dashed,<->] (-3,5.52)--(-3,4);
	\draw (-3,4.7) node[fill=white, inner sep=1pt](N){$\abs{\gamma^\p}$};	
	
	\draw[dashed,<->] (-0.39,5)--(4.5,5);
	\draw (2.5,5) node[fill=white, inner sep=1pt](N){$\abs{\B^\p}$};
		
	\draw[thick] (-0.7,3.9) node[anchor=north east] {\small$(r_+,s_+)$};
	
	\draw[thick] (1.76,3) node[anchor=south east] {\small$(r_-,s_-)$};
	
	\draw[thick] (2.84,4.2) node[fill=white, inner sep=1pt, anchor=north west] {\small$(r_-+\eta,s_-+\eta)$};
  	
  	\draw[thick] (4.5,6) node[anchor=west] {\small$(r_-^\p,s_-^\p)$};
	
	\end{tikzpicture}
	
	\caption{Two points $(r_-,s_-),(r_+,s_+)$ are related as $(r_+,s_+)\in\Omega_{IV}[r_-,s_-]$ which means the Riemann data $(r_-,s_-),(r_+,s_+)$ gives 1-shock of strength $\abs{\B}$ and 2-rarefaction. Under the translation $(r,s)\mapsto(r+\de,s+\de)$, the point $(r_-,s_-)$ goes to $(r^\p_-,s^\p_-)$. For the Riemann data $(r^\p_-,s^\p_-),(r_+,s_+)$, two shocks arise with strengths $\abs{\B^\p},\abs{\ga^\p}$.  } 
\label{fig-2}
\end{figure}

\item The case when $(r_+,s_+)\in\Omega_{II}[r_-,s_-]$ can be handled in the similar way.

\item Suppose $(r_+,s_+)\in\Omega_{III}[r_-,s_-]$. 
\begin{enumerate}
	\item Suppose $\de_-\geq\de_+$. Since we have seen that $(r_+,s_+)\in\Omega_{I}[\tilde{r}_-,\tilde{s}_-]$, we have $0\leq \tilde{r}_--r_+\leq \de_1$ and $0\leq \tilde{s}_--s_+\leq\de_1$. Hence we get the required estimate. See Figure \ref{fig-3} for a clear illustration of this case.
	
	\item  Consider $\de_+>\de_-$. By applying Lemma \ref{lemma:position} we get $(\bar{r}_+,\bar{s}_+)\notin\Omega_{I}[r_-,s_-]$ which is contradiction to our assumption $({r}_+^\p,{s}_+^\p)\in \Omega_{I}[r^\p_-,s^\p_-]$.
\end{enumerate}

	\begin{figure}[ht]
	\centering
	\begin{tikzpicture}
	\draw (-0.8,4) arc (180:140:3cm);
	
	
	
	\draw (2.5,6) arc (90:110:12cm);
	
	
	\draw[thick] (2.5,6)--(-2,1.5);
	\draw[thick] (-0.8,4)--(-0.8,1.5)--(-2,1.5);
	
	\filldraw (-0.8,4) circle (2pt);
	\filldraw (2.5,6) circle (2pt);
	\filldraw (-2,1.5) circle (2pt);

	\draw[thick, color=blue](-3,5.68)--(-0.29,5.68)--(-0.29,6.8);
	\draw[thick, color=blue] (2.5,6)--(2.5,6.8);
	\draw[thick, color=blue](-0.8,4) --(-3,4);
	\draw[thick, color=blue](-0.8,1.5)--(3.8,1.5);
	\draw[thick, color=blue] (-2,1.5)--(-2,3.5);
	\draw[thick, color=blue](2.5,6)--(2.5,2);
	\draw[thick, color=blue](2.5,6)--(3.8,6);	
	\draw[dashed,<->] (3.1,6)--(3.1,1.5);
	\draw (3.1,4) node[fill=white, inner sep=1pt](N){$\de$};
	
	\draw[dashed,<->] (-2,3)--(2.5,3);
	\draw (0.5,3) node[fill=white, inner sep=1pt](N){$\de$};
	
	\draw[dashed,<->] (-0.29,6.5)--(2.5,6.5);
	\draw (1.2,6.5) node[fill=white, inner sep=1pt](N){$\abs{\B}$};
	
	\draw[dashed,<->] (-2.5,4)--(-2.5,5.68);
	\draw (-2.5,4.8) node[fill=white, inner sep=1pt](N){$\abs{\gamma}$};	
	
	\draw (-2,1.5) node[anchor= south east]{$(r_-,s_-)$};
		\draw (-0.83,4) node[anchor= south west]{$(r_+,s_+)$};
			\draw (2.5,6) node[anchor= south west]{$(r^\p_-,s^\p_-)$};

	\end{tikzpicture}
	
	\caption{Three  points $(r_-,s_-),(r_+,s_+),(r^\p_-,s^\p_-)$ are considered in $r$-$s$ plane such that $r_-^\p=r_-+\de,\,s^\p_-=s_-+\de$. From Riemann data $(r_-,s_-),(r_+,s_+)$ two rarefaction waves arise whereas the Riemann data $(r^\p_-,s^\p_-),(r_+,s_+)$ gives two shocks of strengths $\abs{\B},\abs{\ga}$. } \label{fig-3}
\end{figure}
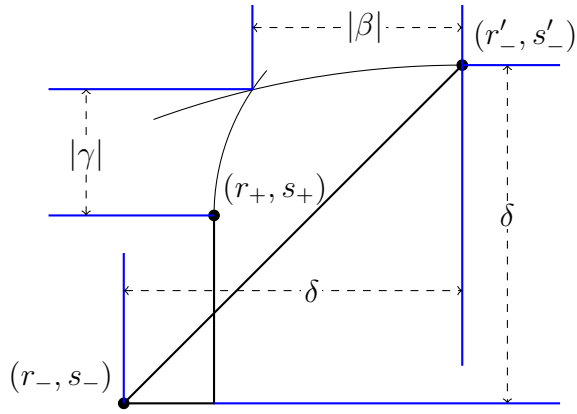
\end{enumerate}
Next we prove the case $(r_+^\p,s_+^\p)\in\Omega_{II}[r_-^\p,s_-^\p]$. Proof of this case follows in a similar way as in the previous case. For proof of this case, we use a generic notation $(\tilde{r}_-,\tilde{s}_-),(\bar{r}_+,\bar{s}_+)$ for the following.
\begin{enumerate}
	\item When $\de_-\geq \de_+$ we shift $(r_+^\p,s_+^\p)$ to $(r_+,s_+)$ and $(r_-^\p,s_-^\p)$ is shifted to the point $(\tilde{r}_-,\tilde{s}_-):=(r_-^\p-\de_+,s_-^\p-\de_+)=(r_-+\de_1,s_-+\de_1)$ where $\de_1=\de_--\de_+\geq0$. 
	
	\item When $\de_+> \de_-$ we shift $(r_-^\p,s_-^\p)$ to $(r_-,s_-)$ and $(r_+^\p,s_+^\p)$ is shifted to the point $(\bar{r}_+,\bar{s}_+):=(r_+^\p-\de_-,s_+^\p-\de_-)=(r_++\de_2,s_++\de_2)$ where $\de_2=\de_+-\de_->0$. 
\end{enumerate}

	\begin{enumerate}[label=Case-2.\arabic*]
	\item Consider the case when $(r_+,s_+)\in\Omega_{I}[r_-,s_-]$. As before, we divide into two cases: (i) $\de_-\geq \de_+$, (ii) $\de_+>\de_-$. 
	\begin{enumerate}
		\item $\de_-\geq\de_+$. In this case, by Lemma \ref{lemma:position} we note that $(r_+,s_+)\notin \Omega_{II}[\tilde{r}_-,\tilde{s}_-]$.
		
		\item $\de_+>\de_-$. Since $(\bar{r}_+,\bar{s}_+)=(r_++\de_2,s_++\de_2)\in \Omega_{II}[r_-,s_-]$ and $(r_+,s_+)\in\Omega_{I}[r_-,s_-]$, we can find a $\de\in[0,\de_2]$ such that $(r_-,s_-)$ lies on the $S_2^\p$ curve starting from $(r_++\de,s_++\de)$. By Lemma \ref{lemma:estimate-2} we get $\abs{\gamma^\p}\leq \de+\abs{\B}+\abs{\ga}$. Hence, we prove the estimate $	\abs{\gamma^\p}\leq \abs{\B}+\abs{\gamma}+\abs{\de_+-\de_-}$. 
	\end{enumerate}

     \item Consider the case when $(r_+,s_+)\in\Omega_{II}[r_-,s_-]$. As before, we divide into two cases: (i) $\de_-\geq \de_+$, (ii) $\de_+>\de_-$. 
     \begin{enumerate}
     	\item $\de_-\geq \de_+$. In this case we note that $\tilde{s}_--s_+=s_--s_++\de_1$ where $\de_1=\de_--\de_+$. Hence we have $\abs{\gamma^\p}=\abs{\gamma}+\de_1$.
     	
     	\item $\de_+>\de_-$. Observe that $\bar{s}_--s_+=s_--s_+-\de_2$ where $\de_2=\de_+-\de_-$. Hence we have $\abs{\gamma^\p}=\abs{\gamma}-\de_2$.

     \end{enumerate}
 
 \item Consider the case when $(r_+,s_+)\in\Omega_{IV}[r_-,s_-]$. As before, we divide into two cases: (i) $\de_-\geq \de_+$, (ii) $\de_+>\de_-$. 
 \begin{enumerate}
 	\item $\de_-\geq \de_+$. In this case, by invoking Lemma \ref{lemma:position} we get $(r_+,s_+)\notin\Omega_{II}[\tilde{r}_-,\tilde{s}_-]$. It is a contradiction. 
 	
 	\item $\de_+>\de_-$. Again by Lemma \ref{lemma:position} we get $(\bar{r}_+,\bar{s}_+)\notin\Omega_{II}[{r}_-,{s}_-]$. This gives a contradiction.
 	
 \end{enumerate}

 \item Consider the case when $(r_+,s_+)\in\Omega_{III}[r_-,s_-]$. As before, we divide into two cases: (i) $\de_-\geq \de_+$, (ii) $\de_+>\de_-$. 
 \begin{enumerate}
 	\item $\de_-\geq \de_+$. Since $(r_+,s_+)\in\Omega_{III}[r_-,s_-]$, we have $s_-\leq s_+$. Hence $\tilde{s}_--s_+=s_-+\de_1-s_+$ and this gives the required estimate.
 	
 	\item $\de_+>\de_-$. In this case we apply Lemma \ref{lemma:position} to get $(\bar{r}_+,\bar{s}_+)\notin\Omega_{II}[r_-,s_-]$ which gives a contradiction.
 	
 \end{enumerate}
 
\end{enumerate}
This completes the proof for case $(r_+^\p,s_+^\p)\in\Omega_{II}[r_-^\p,s_-^\p]$. By a similar argument we can show the case $(r_+^\p,s_+^\p)\in\Omega_{IV}[r_-^\p,s_-^\p]$. This ends the proof of Lemma \ref{lemma:estimate-3}.

\end{proof}

\subsection{Estimates of Glimm functional}\label{sec:Glimm}
\begin{definition}
	\begin{enumerate}
		\item An  $I$-curve is a piece-wise linear, Lipschitz continuous function such that each linear part coincides either with the line joining $(n\De t,x_i+\theta_n\De x), ((n+1)\De t,x_i+\De x+\theta_{n+1}\De x), $  or with the line joining $(k\De t, x_j+\theta_k\De x), ((k+1)\De t,x_j-\De x+\theta_{k+1}\De x)$.  
		\item For $n\geq0$, we define $O^n$-curve is an $I$-curve contained in $\{(t,x); x\in\R,n\De t\leq t\leq (n+1)\De t\}$,
		 we simply write $O$-curve instead of $O^0$.
	\end{enumerate}
\end{definition}

\noi\textbf{Glimm functional:} For an $I$-curve $J$ we define
\begin{align}
V(J)&=\sum\left\{\abs{\al}:\,\al \mbox{ is a shock wave crossing }J\right\},\label{def:V}\\
Q(J)&=\sum\left\{\abs{\beta}\abs{\gamma}: \beta,\gamma\;\mbox{ cross $J$ and approach}\right\},\label{def:Q}\\
F(J)&=V(J)+KQ(J),\label{def:F}
\end{align}
with $K>0$ which will be defined later. 
\begin{remarka}
It is important to point out that for a $J$ curve there is no shock wave crossing $J$ at infinity since the solution of the Riemann problem on time interval $(t_n,t_{n+1})$ is trivial since the solution $(\varrho^\De,u^\De)$ is constant at infinity at the time $t_n$.
\end{remarka}
\begin{remarka}
As in \cite{Nisi-Smo} we can estimate the total variation of the solution $(\varrho^\De,u^\De)$ along $O^n$
 by using the quantity $V(O^n)$. 
When a shock wave cross $O^n$ \footnote{There is no shock wave no rarefaction wave crossing $O^n$ when $O^n$ is includes in the region $]-\infty,-L-(n+1)\De t[$ and $]L+(n+1)\De t,+\infty[$ since the solution $(\varrho^{\De,\theta},u^{\De,\theta})$ is constant and takes respectively the values $(\varrho_-,u_n^-)$ and $(\varrho_+,u_n^+)$ on $[t_n,t_{n+1})$.}, the Riemann invariants $r$ and $s$ decrease whereas $r$ and $s$ increase when a rarefaction wave cross $O^n$. We deduce using Remark \ref{remimp} that the decreasing total variation on $r$ and $s$ is controlled by $V(O^n)$, in particular if we consider the restriction of the functions $r^{\De,\theta}$, $s^{\De,\theta}$ on $O^n$ that we note $r^{\De,\theta}_{|O^n}$, $s^{\De,\theta}_{|O^n}$ where we assume that $r^{\De,\theta}_{|O^n}$ and  $s^{\De,\theta}_{|O^n}$ take the same values as $r^{\De,\theta}$ and $s^{\De,\theta}$ except on the points $(t_{n+1},x_k+\theta_{n+1})$ where $r^{\De,\theta}_{|O^n}$ and  $s^{\De,\theta}_{|O^n}$ take the values $r_{n+\frac{1}{2},k}$, $s_{n+\frac{1}{2},k}$. It implies that the total variation of  $r^{\De,\theta}_{|O^n}$, $s^{\De,\theta}_{|O^n}$ along $O^n$ is bounded by:
\begin{equation}
\begin{aligned}
&TV(r^{\De,\theta}_{|O^n})
&\leq 2 V(O^n)+ |r_n^{+}-r_n^-|,
\end{aligned}
\end{equation}
\begin{equation}
\begin{aligned}
&TV(s^{\De,\theta}_{|O^n})\leq 2 V(O^n)+|s_n^{+}-s_n^-|.
\end{aligned}
\end{equation}
It implies in particular that we have with our definition of $r^{\De,\theta}_{|O^n}$ and $s^{\De,\theta}_{|O^n}$ that:
\begin{equation}
\begin{aligned}
&TV(r^{\De,\theta}(t_n,\cdot))\leq 2 V(O^n)+ |r_n^{+}-r_n^-|,\; TV(s^{\De,\theta}(t_n,\cdot))\leq 2 V(O^n)+ |r_n^{+}-r_n^-
\\
&TV(r_{n+\frac{1}{2}}^{\De,\theta}(t_n,\cdot))\leq 2 V(O^n)+ |r_n^{+}-r_n^-|,\;TV(s_{n+\frac{1}{2}}^{\De,\theta}(t_n,\cdot))\leq 2 V(O^n)+ |r_n^{+}-r_n^-|.
\end{aligned}
\end{equation}
Using Lemma \ref{lemma1} and the definition of the Riemann invariant we deduce that:
\begin{equation}
\begin{aligned}
&TV(u^{\De,\theta}(t_n,\cdot))\leq 4 V(O^n)+C'_T,\\
&TV(u^{\De,\theta}(t_{n+\frac{1}{2}},\cdot))\leq 4 V(O^n)+C'_T,
\label{techimp}
\end{aligned}
\end{equation}
with $C'_T$ a positive constant depending only on $T$.
\label{remimp1a}
\end{remarka}
We recall the following Lemma for the $O$ curve (see \cite{Nisi-Smo} p197).
\begin{lemma}\label{lemma-esti-1}
	Let $\e_1$ such that $4C\e_1TV(r_0(\cdot),s_0(\cdot)) \leq1$. Let $0\leq \e\leq \min \{\e_0,\e_1\}$ where $\e_0$ is as in Lemma \ref{lemma:Euler-1} for Riemann problems issue of the values defined by $(\varrho_0,u_0)$ with $0<\underline{\varrho}\leq \varrho_0(x)\leq \bar{\varrho}<\f$ for $x\in\R$. Suppose $F$ is defined as in \eqref{def:F} for $K=4C\e$ where $C$ is the constant as in \eqref{interaction-est-1}.Then we have
	\begin{equation}
	F(O)=V(O)+KQ(O)\leq 2V(O)\leq  2 TV(r_0(\cdot),s_0(\cdot)).
	\end{equation}
\end{lemma}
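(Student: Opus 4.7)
The plan is to follow the strategy of Nishida--Smoller for the initial curve, isolating the three elementary ingredients: a bound on $V(O)$ by the initial total variation, a trivial quadratic bound $Q(O)\leq V(O)^{2}$, and a smallness absorption using the hypothesis $4C\e_{1}\,TV(r_{0}(\cdot),s_{0}(\cdot))\leq 1$.

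First I would show that $V(O)\leq TV(r_{0}(\cdot),s_{0}(\cdot))$. Because $O$ is an $O^{0}$-curve, it lies inside the first time slab $[0,\Delta t]$, where by the (imposed) CFL condition the Riemann fans emanating from the grid points at $t=0$ do not yet interact. Each such Riemann problem is solved by a $1$-wave followed by a $2$-wave (in one of the four configurations $\Omega_I,\dots,\Omega_{IV}$ described before Lemma \ref{lemma:position}), and only shocks contribute to $V(O)$. Using the Lax curve description in Riemann coordinates together with the property $0\leq g_{i}^{\prime}<1$ from Lemma \ref{lemma:NS}, the strengths $|\beta|,|\gamma|$ of the emerging shocks satisfy $|\beta|+|\gamma|\leq |r_{l}-r_{r}|+|s_{l}-s_{r}|$ for every Riemann problem with left/right states $(r_{l},s_{l}),(r_{r},s_{r})$ (whatever combination of shocks and rarefactions arises, since rarefactions do not contribute to $V$). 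Summing over all initial Riemann problems and using that any Lipschitz curve $O$ in the first time slab crosses each fan at most once per wave family, one obtains
\begin{equation*}
V(O)\ \leq\ \sum_{i}\bigl(|r_{0,i+1}-r_{0,i-1}|+|s_{0,i+1}-s_{0,i-1}|\bigr)\ \leq\ TV(r_{0}(\cdot))+TV(s_{0}(\cdot))\ =\ TV(r_{0}(\cdot),s_{0}(\cdot)).
\end{equation*}

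Next, the interaction potential satisfies the trivial estimate $Q(O)\leq V(O)^{2}$, directly from the definition \eqref{def:Q} (the sum $\sum |\beta||\gamma|$ over approaching pairs is dominated by $(\sum_{\alpha\text{ shock}}|\alpha|)^{2}$). Combining this with Step~1 and the choice $K=4C\e\leq 4C\e_{1}$ yields
\begin{equation*}
KQ(O)\ \leq\ 4C\e\,V(O)^{2}\ \leq\ 4C\e_{1}\,TV(r_{0}(\cdot),s_{0}(\cdot))\,V(O)\ \leq\ V(O),
\end{equation*}
where the last inequality uses the smallness assumption $4C\e_{1}TV(r_{0}(\cdot),s_{0}(\cdot))\leq 1$. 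Adding $V(O)$ to both sides gives $F(O)=V(O)+KQ(O)\leq 2V(O)$, and combining with Step~1 yields $F(O)\leq 2\,TV(r_{0}(\cdot),s_{0}(\cdot))$, which is the claim.

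The only real work is Step~1: carefully verifying that for each elementary Riemann problem the total outgoing shock strength is bounded by the jump of $(r,s)$ across it. This uses exactly the monotonicity $0\leq g_{i}^{\prime}<1$ from Lemma \ref{lemma:NS} and an inspection of the four cases of Lemma \ref{lemma:position}; the role of the condition $\e\leq \e_{0}$ from Lemma \ref{lemma:Euler-1} is to guarantee that all these Riemann problems are solvable without vacuum despite the large initial data. Once Step~1 is secured, Steps~2 and~3 are routine. I do not expect the Nishida--Smoller interaction estimate \eqref{interaction-est-1} to be used at this stage (it will enter only when propagating the bound from $O^{n}$ to $O^{n+1}$).
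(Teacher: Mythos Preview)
Your proposal is correct and follows exactly the Nishida--Smoller argument that the paper invokes (the paper does not give its own proof but simply cites \cite{Nisi-Smo}, p.~197). The three steps you outline---bounding each outgoing shock pair by $|r_l-r_r|+|s_l-s_r|$ via the monotonicity $0\leq g_i'<1$, the crude estimate $Q(O)\leq V(O)^2$, and the absorption $KQ(O)\leq V(O)$ from the smallness of $\e$---are precisely the ingredients of that reference, and your remark that \eqref{interaction-est-1} is not needed at this stage is also correct.
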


\begin{lemma}
	Let $T>0$. Then for $n\geq1$ such that $t_n\leq T$, we have
	\begin{align}
	TV(\Psi_n^{\De,\theta})&\leq \xi_T+4\frac{q}{e}\left(1+\norm{\mu^\De}_{L^\f}\right),\label{TV-Psi}\\
		\norm{\Psi_{n}^{\De,\theta}}_{L^\f(\R)}&\leq \norm{\Psi^-}_{L^\f(0,T)}+\xi_T+4\frac{q}{e}\left(1+\norm{\mu^\De}_{L^\f}\right),\label{L-infty-Psi}\\
	\norm{\varrho_n^{\De,\theta}}_{L^\f(\R)}&\leq C(\rho^-,V(O^n)),\label{L-infty-varrho}\\
	\norm{u_n^{\De,\theta}}_{L^\f(\R)}&\leq C_T+ 4V(O^n),\label{L-infty-u}
	\end{align}
		where $C(\cdot,\cdot)$ depends only on $\underline{\varrho},\bar{\varrho},M$ and $\xi_T=\norm{\xi_0}_{L^1}+T(\mu^-+\mu^+)C_T$ with $C_T$ is the same constant as in Lemma \ref{lemma1}.
\end{lemma}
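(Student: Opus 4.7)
The plan is to handle the four estimates in sequence, invoking Lemma \ref{lemma-2}, Remark \ref{remimp1a} and the invariant-region property of Lemma \ref{lemma:Euler-1} respectively, together with the far-field bounds from Lemma \ref{lemma1}. Nothing here is conceptually new; the work consists in reading off each estimate from the appropriate structural property of the scheme.

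For the two bounds on $\Psi$ I would exploit the piecewise constant structure of $\Psi_n^{\De,\theta}$. Since $\Psi_{n,i}=\Psi_n^{-}+\int_{-\infty}^{x_{i+1}}\xi_n(y)\,dy$ on $I_{n,i}$, two successive cells $I_{n,i}$ and $I_{n,i+2}$ differ by $\int_{x_{i+1}}^{x_{i+3}}\xi_n(y)\,dy$; summing moduli of jumps and using $\Psi_n^{\De,\theta}(x)\to \Psi_n^{\pm}$ at $\pm\infty$ (Lemma \ref{lem23}) yields $TV(\Psi_n^{\De,\theta})\leq \|\xi_n\|_{L^1}$, while the pointwise formula gives $\|\Psi_n^{\De,\theta}\|_{L^\infty}\leq \|\Psi^-\|_{L^\infty(0,T)}+\|\xi_n\|_{L^1}$. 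Both bounds \eqref{TV-Psi} and \eqref{L-infty-Psi} then follow at once from the $L^1$ estimate \eqref{29}.

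For the density bound I would use the invariant-region property recalled in Lemma \ref{lemma:Euler-1}: in the Riemann problem at position $i$ at time $t_n$ the invariants satisfy $r\geq \min\{r_{n,i-1},r_{n,i+1}\}$ and $s\leq\max\{s_{n,i-1},s_{n,i+1}\}$, so after Glimm sampling $(r_{n+\frac{1}{2},i},s_{n+\frac{1}{2},i})$ lies in the rectangle determined by the neighbouring values. Since the correction step leaves the density unchanged, $\varrho^{\De,\theta}_{n+1,i}=\varrho^{\De,\theta}_{n+\frac{1}{2},i}$, and so $s-r$ at time $t_{n+1}$ is controlled from above by $\sup s_n^{\De,\theta}-\inf r_n^{\De,\theta}$, which by Remark \ref{remimp1a} is bounded by $4V(O^n)$ plus boundary contributions $|r_n^{\pm}|$, $|s_n^{\pm}|$ that are uniform in $n$ thanks to Lemma \ref{lemma1} and the hypotheses on the initial data. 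Inverting $\varrho^{\e}-1=\e(s-r)/(2\sqrt{\gamma})$ then yields \eqref{L-infty-varrho}.

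For $u$, the identity $2u=r+s$ together with Remark \ref{remimp1a} gives $TV(u_n^{\De,\theta})\leq 4V(O^n)+C'_T$ where $C'_T$ depends only on $T$; combining with the far-field bound $|u_n^{-}|\leq C_T$ of Lemma \ref{lemma1} and absorbing $C'_T$ into $C_T$ produces \eqref{L-infty-u}. One small point to check is that the correction step \eqref{def:u:n+1} does not destroy this bound: since $(1-e^{-\sigma^\De_i\De t})/\sigma^\De_i\leq \De t$, the correction adds a term bounded by $(q/m)\De t\,\|\Psi_{n+1}^{\De,\theta}\|_{L^\infty}$, which is already controlled by the first step. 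The main obstacle, modest as it is, is keeping all constants independent of the time index $n$ so that the bounds survive iteration up to $t_n\leq T$; this requires the far-field preservation \eqref{far-field-u} at each step, which was built into the construction of the scheme and verified in Lemma \ref{lem23}.
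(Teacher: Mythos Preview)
Your proposal is correct and follows essentially the same route as the paper: the $\Psi$ bounds come from $\|\xi_n\|_{L^1}$ via Lemma~\ref{lemma-2}, and the $u$ and $\varrho$ bounds from Remark~\ref{remimp1a} combined with the far-field control of Lemma~\ref{lemma1}. Your detour through the invariant-region property of Lemma~\ref{lemma:Euler-1} for $\varrho$ and your remark on the correction step for $u$ are more detailed than the paper's brief appeal to the $C^1$-diffeomorphism $(\varrho,u)\leftrightarrow(r,s)$ at fixed time $t_n$, but the substance is the same.
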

\begin{proof}
	From the definition of $\Psi^\De_n$ and Lemma \ref{lemma-2} we have
	\begin{equation*}
	TV(\Psi^\De_n)=\sum\limits_{n+j\in2\mathbb{Z}}\abs{\Psi_{n,j}-\Psi_{n,j+2}}\leq \int\limits_{\R}\abs{\xi_n^\De}\,dx\leq \xi_T+4\frac{q}{e}\left(1+\norm{\mu^\De}_{L^\f}\right)
	\end{equation*}
		where $\xi_T=\norm{\xi_0}_{L^1}+T(\mu^-+\mu^+)C_T$. To get \eqref{L-infty-Psi} note the following
		\begin{align}
		\norm{\Psi_{n}^\De}_{L^\f(\R)}\leq \Psi_n^-+TV(\Psi^\De_n)&\leq \norm{\Psi^-}_{L^\f(0,T)}+TV(\Psi^\De_n)\nonumber\\
		&\leq \norm{\Psi^-}_{L^\f(0,T)}+\xi_T+4\frac{q}{e}\left(1+\norm{\mu^\De}_{L^\f}\right).\label{ineq-L-infty-Psi}
		\end{align}
The estimate (\ref{L-infty-u}) is a direct consequence of the Remark  \ref{remimp1a} and of the Lemma \ref{lemma1}. Similarly since we can write $\varrho$ in terms of $r(\varrho,u)$ and $s(\varrho,u)$,
and using the fact that the Riemann invariant defined $C^1$ diffeomorphism with respect to $\varrho,u$ we obtain (\ref{L-infty-Psi})
	with $C(\cdot,\cdot)$ depending only on $\underline{\varrho},\bar{\varrho},M$. Now with a similar argument as in \eqref{ineq-L-infty-Psi} we get \eqref{L-infty-varrho}, \eqref{L-infty-u}.
\end{proof}
We Suppose $A_1,B_1$ be two constants such that for $L_1=2L+\frac{2T}{\lambda}$
\begin{align}
A_1&\geq 4e^{\la A  L_1}TV(\si^\De )+16 \norm{\si^\De}_\f,\  \label{def:A1}\\
	B_1&\geq 4C_TTV(\si^\De )+4B\frac{e^{A\la L_1}-1}{A}TV(\si^\De )+4 C'_T  \norm{\si^\De}_\f+\frac{4q}{m}\norm{\Psi_{n+1}^\De}_\f TV(\si^\De)\De t\label{def:B1} \\
	&+\frac{4q}{m}TV(\Psi^\De_{n+1}),\nonumber
\end{align}
where $A,B>0$ are defined as 
\begin{equation}\label{def:A-B}
A\geq 32\norm{\si^\De}_\f\mbox{ and }B\geq 8(C_T+C'_T)\norm{\si^\De }_{\f}+\frac{8q}{m}\left[\norm{\Psi^\De}_{\f}\abs{\si^\De}_\f\De t+\norm{\Psi^\De}_\f\right].
\end{equation}
\begin{lemma}\label{lemma:BV-bound}
	Let $A_1,B_1>0$ be defined as in \eqref{def:A1} and \eqref{def:B1}. Let $\e_2>0$ satisfies
	\begin{equation}
	4C\e_2\left[e^{\la L_1}e^{A_1n\De t}TV(r_0,s_0)+B\frac{e^{A\la L_1}-1}{A}e^{A_1n\De t}+B_1\frac{e^{A_1n\De t}-1}{A_1}\right]\leq \min\{1,C_0\},
	\end{equation} 
	where $C>0$ is a constant appeared in \eqref{interaction-est-1} and $C_0>0$ is the constant as in \cite[Lemma 4, pgae-193]{Nisi-Smo}. Let $\e_0,\e_1$ be as in Lemma \ref{lemma-esti-1}. Then for $0\leq \e\leq \min\{\e_i,i=0,1,2\}$ we have
	\begin{equation}\label{estimate-FOn}
	F(O^{n})\leq e^{A_1n\De t}F(O)+B_1\sum\limits_{k=0}^{n-1}(1+A_1\De t)^k\mbox{ for }0\leq n+1\leq \frac{T}{\De t}. 
	\end{equation}
\end{lemma}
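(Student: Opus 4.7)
The argument proceeds by induction on $n$. The base case $n=0$ is trivial since the right-hand side of \eqref{estimate-FOn} reduces to $F(O)$ (the sum is empty). For the inductive step I would follow the splitting structure of the scheme and decompose the passage from $O^n$ to $O^{n+1}$ into two substeps: (i) a homogeneous Euler step yielding the intermediate profile $(\vr^{\De,\theta}_{n+1/2}, u^{\De,\theta}_{n+1/2})$ at time $t_{n+1}^{-}$, and (ii) a translation step in $u$ sending $u_{n+\frac{1}{2},i}$ to $u_{n+1,i}$ via \eqref{def:u:n+1}. Let $\widetilde O^{\,n+1}$ denote an auxiliary $I$-curve sitting in $\{t_n \le t \le t_{n+1}\}$ immediately above all wave interactions generated by the homogeneous step.

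For substep (i), with the calibration $K=4C\e$ already used in Lemma \ref{lemma-esti-1}, the Nishida--Smoller interaction estimate of Lemma \ref{lemma:NS} shows that inside each mesh diamond the increment of $V$ produced by an interaction of opposite-family waves is dominated by a $C\e$ multiple of the product of the incoming strengths, hence absorbed by the decrease of $KQ$. Summing over diamonds between $O^n$ and $\widetilde O^{\,n+1}$ and invoking the argument of \cite{Nisi-Smo} yields $F(\widetilde O^{\,n+1}) \leq F(O^n)$. For substep (ii), I would set $\de_i := u_{n+1,i} - u_{n+\frac{1}{2},i}$, which from \eqref{def:u:n+1} equals
\begin{equation*}
\de_i = u_{n+\frac{1}{2},i}\bigl(e^{-\si_i^\De \De t}-1\bigr) - \frac{q}{m}\,\frac{1 - e^{-\si_i^\De \De t}}{\si_i^\De}\,\Psi_{n+1,i}.
\end{equation*}
At every interface $x_i$ crossed by $O^{n+1}$, Lemma \ref{lemma:estimate-3} applied with left/right translations $(\de_-,\de_+)=(\de_{i-1},\de_{i+1})$ bounds, case by case, the outgoing $1$- and $2$-wave strengths by the incoming strengths plus $|\de_{i+1}-\de_{i-1}|$. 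Summing over interfaces gives $V(O^{n+1}) \le V(\widetilde O^{\,n+1}) + \sum_i|\de_{i+1}-\de_{i-1}|$ and, using $|\de_i|=O(\De t)$, an analogous control on the variation of $Q$ whose $\e$-weighted cross terms are absorbed thanks to the smallness of $\e$ provided by the hypothesis on $\e_2$.

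The crucial step is to estimate $\sum_i |\de_{i+1}-\de_{i-1}|$ so as to reproduce the exact constants of \eqref{def:A1}--\eqref{def:B1}. I would split $\de_i$ linearly into an $u$-part and a $\Psi$-part and apply the discrete product rule: the prefactors $e^{-\si_i^\De\De t}-1$ and $(1-e^{-\si_i^\De\De t})/\si_i^\De$ have total variation of order $\De t\cdot TV(\si^\De)$ (using $e^{-\si\De t}-1=O(\si\De t)$); the differences of $u^{\De}_{n+1/2}$ are controlled by $V(\widetilde O^{\,n+1})$ via the argument leading to \eqref{techimp}; and the differences of $\Psi_{n+1}^\De$ are controlled by $TV(\Psi_{n+1}^\De)$ bounded in \eqref{TV-Psi}. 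Combining these with the $L^\infty$ bounds from Lemma \ref{lemma1} and \eqref{L-infty-Psi}--\eqref{L-infty-u} (which provide the prefactors $C_T$, $C_T'$, $\|\Psi^\De\|_\infty$) yields the one-step recursion
\begin{equation*}
F(O^{n+1}) \le (1+A_1\De t)\,F(O^n) + B_1\De t,
\end{equation*}
with $A_1, B_1$ precisely as announced. Iterating this discrete Gronwall inequality from the inductive hypothesis and using $(1+A_1\De t)^n \le e^{A_1 n \De t}$ gives \eqref{estimate-FOn}. The principal obstacle is precisely the case analysis of Lemma \ref{lemma:estimate-3} and the bookkeeping of the twelve Riemann configurations under non-uniform translations; a secondary delicacy is ensuring that the smallness hypothesis on $\e_2$ propagates uniformly over all $n \le T/\De t$, which is the source of the almost global (rather than global) character of the result.
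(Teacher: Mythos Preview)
Your plan is correct and uses the same ingredients as the paper (outer induction on $n$, Nishida--Smoller for the homogeneous interaction, Lemma \ref{lemma:estimate-3} for the electric-field translation, product-rule estimate of $\sum_i|\de_{i+1}-\de_{i-1}|$, discrete Gronwall), but the organization of the step $O^n\to O^{n+1}$ is genuinely different. The paper does \emph{not} globally separate a homogeneous substep from a translation substep; instead it advances diamond by diamond through a chain of immediate successors $\bar J_0=O^n\le\bar J_1\le\cdots\le\bar J_m=O^{n+1}$, and at each diamond compares $J_2$ to a virtual $\tilde J_2$ (same curve, but with outgoing waves computed from the \emph{untranslated} data $\de=0$), invoking Nishida--Smoller for $F(\tilde J_2)\le F(J_1)$ and Lemma \ref{lemma:estimate-3} for $F(J_2)-F(\tilde J_2)$. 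Because the translation can increase $F$, the intermediate values $F(\bar J_j)$ are not a priori dominated by $F(O^n)$, so the paper needs an \emph{inner} induction (Claim \ref{claim-1}, with the auxiliary constants $A,B$ of \eqref{def:A-B}) to bound $F(\bar J_j)$ uniformly in $j$ and cap the factor $(1+KF(\bar J_j))\le 2$ before summing the telescoping differences. Your decomposition sidesteps this inner loop entirely: in substep (i) pure Nishida--Smoller gives monotonicity so smallness is preserved for free, and substep (ii) is done in one shot. Two small corrections: your $\widetilde O^{\,n+1}$ cannot literally be an $I$-curve ``in $\{t_n\le t\le t_{n+1}\}$ above all interactions'' (by CFL there are no interactions inside one slab); what makes your argument work is the \emph{virtual} Glimm functional computed from the Riemann data $(\vr_{n+1/2,\cdot},u_{n+1/2,\cdot})$ at level $t_{n+1}$. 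And ``precisely as announced'' is slightly too strong: your route yields constants no larger than the $A_1,B_1$ of \eqref{def:A1}--\eqref{def:B1} (in particular without the $e^{\la A L_1}$ factor that the paper inherits from Claim \ref{claim-1}); since those are stated as lower bounds, \eqref{estimate-FOn} still follows a fortiori.
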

\begin{remark}
	In approximation process, we choose $\si^\De,\mu_-^\De$ such that $\norm{\si^\De}_\f\leq \norm{\si}_\f$, $TV(\si^\De)\leq TV(\si)$ and $\norm{\mu_-^\De}_\f\leq \norm{\mu_-}_\f$. By \eqref{TV-Psi} and \eqref{L-infty-Psi}, we get that the constants $A_1,B_1$ are independent of mesh size.
\end{remark}
\begin{proof}
	We prove \eqref{estimate-FOn} for $0\leq n\leq T/\De t$ by induction. For $n=0$, the estimate \eqref{estimate-FOn} trivially follows. We assume that the inequality \eqref{estimate-FOn} is true for $n$ then we prove it for $n+1$. By \eqref{far-field-u} we know that for a sufficiently large $i_n\geq1$, 
	\begin{align*}
	(\varrho_{n+1,i},u_{n+1,i})&=(\varrho_{n,i+1},u_{n,i+1})=(\varrho_-,u_{n+1}^-)\mbox{ for }i\De x\leq -L-(n+1)\De x,\\
	(\varrho_{n+1,i},u_{n+1,i})&=(\varrho_{n,i-1},u_{n,i-1})=(\varrho_+,u_{n+1}^+)\mbox{ for }i\De x\geq L+(n+1)\De x.
	\end{align*}
	Therefore, we can reach $O^{n+1}$ from $O^n$ in finitely many steps by considering consecutively immediate successor (indeed there is nothing to do on $O^{n+1}$ when $x\leq -L-(n+1)\De x$ and when $x\geq L+(n+1)\De x$ since we know that there is no shock wave crossing $O^{n+1}$), that is, there are $\bar{J}_i,0\leq i\leq m$ such that $O^n=\bar{J}_0\leq\bar{J}_1\leq\cdots\leq \bar{J}_m=O^{n+1}$ where $\bar{J}_{i+1}$ is immediate successor of $\bar{J}_i$ for $i\geq0$. We also observe that $m\De x\leq L_1=2L+T$. 
	 \begin{claim}\label{claim-1}
	 	Let $A,B>0$ be defined as in \eqref{def:A-B}. Then we have,
	 	\begin{equation}\label{estimate-FJi}
	 	F(\bar{J}_i)\leq e^{\la Ai\De x}F(O^n)+B\De t\sum\limits_{l=0}^{i-1}(1+A\De t)^l\mbox{ for }0\leq i\leq m.
	\end{equation}
	 \end{claim}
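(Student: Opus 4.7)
The plan is to argue by induction on $i$. The case $i=0$ is immediate since $\bar J_0 = O^n$. Because $\lambda \De x = \De t$, one has $(1+A\De t)^i \leq e^{A i \De t} = e^{\lambda A i \De x}$, so it suffices to prove the one-step estimate
\begin{equation*}
F(\bar J_{i+1}) \leq (1 + A\De t)\, F(\bar J_i) + B\De t,
\end{equation*}
from which \eqref{estimate-FJi} follows by straightforward iteration.

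To establish this per-step bound, the plan is to exploit the structure of the immediate successor: $\bar J_{i+1}$ is obtained from $\bar J_i$ by lifting a single downward vertex $(x_j+\theta_n,n\De t)$ across a diamond $\Lambda$ whose remaining three vertices are $(x_{j\pm1}+\theta_{n+1},(n+1)\De t)$ and $(x_j+\theta_{n+2},(n+2)\De t)$. The splitting update \eqref{def:u:n+1} acts at the two side vertices, so the analysis inside $\Lambda$ must handle two contributions: (a) the interaction, through the Riemann problem solved at $x_j$ on the upper strip, of the two Riemann fans emanating from $x_{j\pm1}$ on the lower strip; and (b) the velocity translations $\delta_{j\pm1} = u_{n+1,j\pm1} - u_{n+\frac{1}{2},j\pm1}$ introduced by the source ODE between these two Riemann problems.

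For (a), in the absence of the translations the Nishida--Smoller interaction estimate \eqref{interaction-est-1} gives $V(\bar J_{i+1}) - V(\bar J_i) \leq C\e |\alpha||\beta|$, with a matching decrease $-|\alpha||\beta|(1 - C\e V(\bar J_i))$ in the approach part of $Q$. Choosing $K=4C\e$ and using the smallness of $\e V(\bar J_i)$ built into the hypothesis on $\e_2$, the contribution of (a) to $F$ is nonpositive, exactly as in Lemma \ref{lemma-esti-1}. For (b), Lemma \ref{lemma:estimate-3} guarantees that translating the left and right states by $\delta_-$ and $\delta_+$ changes each outgoing wave strength by at most $c\,|\delta_+ - \delta_-|$, uniformly in how the incoming and outgoing configurations are distributed among $\Omega_{I},\ldots,\Omega_{IV}$. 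It therefore remains to bound $|\delta_+ - \delta_-|$, which is done by expanding the update rule \eqref{def:u:n+1} to first order in $\De t$:
\begin{equation*}
|\delta_+ - \delta_-| \leq \|\sigma^\De\|_\infty \De t\, |u_{n+\frac{1}{2},+}-u_{n+\frac{1}{2},-}| + \De t\,\bigl(|u_{n+\frac{1}{2},-}|+\tfrac{q}{m}|\Psi_{n+1,-}|\De t\bigr)\, |\sigma^\De_+ - \sigma^\De_-| + \tfrac{q}{m}\De t\, |\Psi_{n+1,+}-\Psi_{n+1,-}|.
\end{equation*}
The first summand, involving the local $u$-jump, is controlled by the wave strengths crossing $\bar J_i$ and hence by $V(\bar J_i)$, so that it contributes to the multiplicative increment $A\De t\, F(\bar J_i)$ with the choice of $A$ in \eqref{def:A-B}. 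The remaining two summands are bounded crudely using the $L^\infty$ bounds on $u, \Psi^\De$ and $\sigma^\De$ from Lemma \ref{lemma1} and \eqref{L-infty-Psi}, together with $|\sigma^\De_+-\sigma^\De_-|\leq 2\|\sigma^\De\|_\infty$ and $|\Psi_{n+1,+}-\Psi_{n+1,-}|\leq 2\|\Psi^\De\|_\infty$, yielding the additive $B\De t$ contribution with $B$ as in \eqref{def:A-B}; the extra factor $1+KV(\bar J_i)$ that appears when the translation effect is transferred from $V$ into the quadratic $KQ$ is absorbed into the numerical constants in \eqref{def:A-B}.

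The main obstacle is the verification that the pointwise bound $|\alpha'|+|\gamma'| \leq |\alpha|+|\gamma|+c|\delta_+-\delta_-|$ holds uniformly across all possible wave configurations, since the translated left and right states may now lie in a different pair among $\Omega_{I},\ldots,\Omega_{IV}$ than the untranslated ones (so, for instance, a shock can be turned into a rarefaction). The three-by-four combinatorics of admissible transitions is precisely what Lemma \ref{lemma:estimate-3} tabulates, and its conclusions reduce the present argument to the local algebra outlined above.
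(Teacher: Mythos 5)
Your proposal is correct and follows essentially the same route as the paper: induction reducing to the one-step bound $F(\bar J_{i+1})\leq(1+A\De t)F(\bar J_i)+B\De t$, with the diamond analysis split into the translation-free interaction (handled by the Nishida--Smoller estimate, i.e.\ $F(\tilde J_2)\leq F(J_1)$) plus the translation effect controlled by Lemma \ref{lemma:estimate-3} and the decomposition $\de_{n,i}=a_{n,i}+b_{n,i}$, exactly as in the paper. The only cosmetic difference is that you phrase the output of Lemma \ref{lemma:estimate-3} as a per-wave bound, whereas what is actually used (and what the lemma delivers uniformly over configurations) is the bound on the total increment of $V$ and $Q$ over the diamond; your final displayed form $|\alpha'|+|\gamma'|\leq|\alpha|+|\gamma|+c|\delta_+-\delta_-|$ is the correct one.
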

 \begin{proof}[Proof of Claim \ref{claim-1}] We will also prove this claim by induction. Note that $i=0$ case is trivial. Next we assume that \eqref{estimate-FJi} is true for $i=j$, then we show for $i=j+1$. 
 	
 	First, we assume that $J_2$ is an immediate successor to $J_1$ and we wish to evaluate $F(J_2)$ in terms of $F(J_1)$.
	Let $J_1\setminus J_2$ be consisting lines  $\mathcal{L}[(t_{n+1},x_{k}+\theta_{n+1}\De x),(t_{n},x_{k+1}+\theta_{n}\De x)]$ and $\mathcal{L}[(t_{n+1},x_{k+2}+\theta_{n+1}\De x),(t_{n},x_{k+1}+\theta_{n}\De x)]$. Let $J_2\setminus J_1$ be consisting lines  $\mathcal{L}[(t_{n+1},x_{k}+\theta_{n+1}\De x),(t_{n+2},x_{k+1}+\theta_{n+2}\De x)]$ and $\mathcal{L}[(t_{n+1},x_{k+2}+\theta_{n+1}\De x),(t_{n+2},x_{k+1}+\theta_{n+2}\De x)]$. We can observe that we define a diamond-shaped region $D_{n+1,k}$ with vertices at the four surrounding sampling points $(t_{n+1},x_{k}+\theta_{n+1}\De x)$, $(t_{n},x_{k+1}+\theta_{n}\De x)$, $(t_{n+1},x_{k+2}+\theta_{n+1}\De x)$ and $(t_{n+2},x_{k+1}+\theta_{n+2}\De x)$ with respectively the following state $(\varrho_{n+1,k},u_{n+1,k})$, $(\varrho_{n,k+1},u_{n,k+1})$, $(\varrho_{n+1,k+2},u_{n+1,k+2})$ and $(\varrho_{n+2,k+1},u_{n+2,k+1})$ (see Figure \ref{fig-4} for an illustration).
	\begin{figure}[ht]
		\centering
		\begin{tikzpicture}
		
		\draw[thick][color=black] (-5 ,0) -- (4,0) ;
		\draw[thick][color=black] (-5 ,2) -- (4,2);
		\draw[thick][color=black] (-5 ,4) -- (4,4);
		
		\draw[thick,dashed][color=blue] (-1.5 ,0) -- (-3,2)--(0,4)--(2,2)--(-1.5,0) ;
		\draw[thick][color=black] (-2.8 ,0) -- (-1.7,1.5) ;
		\draw[thick][color=black] (-2.8 ,0) -- (-1.4,1.5) ;
		\draw[thick][color=black] (-2.8 ,0) -- (-4,1.5) ;
				
		\draw[thick][color=black] (1.8 ,0) -- (.6,1.5) ;
		\draw[thick][color=black] (1.8 ,0) -- (3,1.5) ;
		
			\filldraw (-1.5 ,0) circle (2pt);
			\filldraw (-3,2) circle (2pt);
			\filldraw (-0,4) circle (2pt);
			\filldraw (2,2) circle (2pt);
		
		\draw[thick][color=black] (-.5 ,2) -- (1.2,3.5) ;
		\draw[thick][color=black] (-0.5 ,2) -- (-2,3.5) ;
		\draw[thick] (0,4) node[anchor=south] {\tiny$(x_{k+1}+\theta_{n+2}\De x,t_{n+2})$};
				\draw[thick] (-3,2) node[anchor=south east] {\tiny$(x_{k}+\theta_{n+1}\De x,t_{n+1})$};
						\draw[thick] (2,2) node[anchor=south west] {\tiny$(x_{k+2}+\theta_{n+1}\De x,t_{n+1})$};
								\draw[thick] (-1.1,0) node[anchor=north] {\tiny$(x_{k+1}+\theta_{n}\De x,t_{n})$};

		\end{tikzpicture}
		\caption{This illustrates the diamond (in dotted lines) formed by $(J_2\setminus J_1)\cup (J_1\setminus J_2)$ where $J_i,\,i=1,2$ are I-curves and $J_2$ is immediate successor of $J_1$. }
		 \label{fig-4}
	\end{figure}
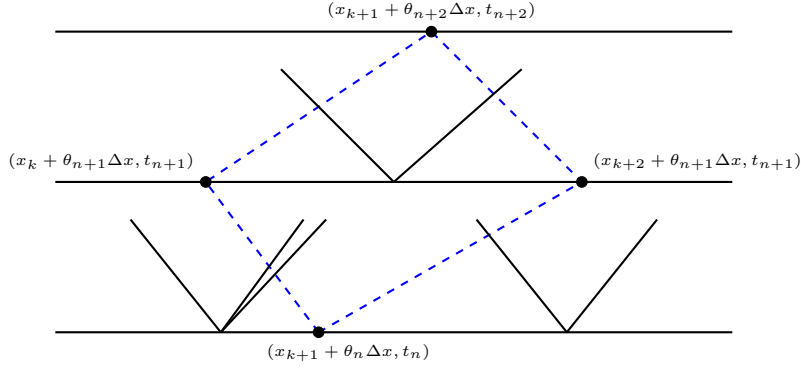
	From \eqref{def:u:n+1} we have
	\begin{equation*}
	u_{n+1,k}=u_{n+\frac{1}{2},k}\mbox{exp}(-\si_{k}^\De\De t)-\frac{q}{m}\frac{1-\mbox{exp}(-\si_{k}^\De\De t)}{\si^\De_{k}}\Psi_{n+1,k}.
	\end{equation*}
	We rewrite $u_{n+1,k}$ as follows
	\begin{equation*}
	u_{n+1,k}=u_{n+\frac{1}{2},k}+\de_{n+1,k}
	\end{equation*}
	 where $\de_{n+1,k}$ is defined as
	 $$\de_{n+1,k}=u_{n+\frac{1}{2},k}\left[\mbox{exp}(-\si_{k}^\De\De t)-1\right]-\frac{q}{m}\frac{1-\mbox{exp}(-\si_{k}^\De\De t)}{\si^\De_{k}}\Psi_{n+1,k}.$$
	Similarly we write $u_{n+1,k+2}=u_{n+\frac{1}{2},k+2}+\de_{n+1,k+2}$ where $\de_{n+1,k+2}$ is defined as
	\begin{equation*}
	\de_{n+1,k+2}=u_{n+\frac{1}{2},k+2}\left[\mbox{exp}(-\si_{k+2}^\De\De t)-1\right]-\frac{q}{m}\frac{1-\mbox{exp}(-\si_{k+2}^\De\De t)}{\si^\De_{k+2}}\Psi_{n+1,k+2}.
	\end{equation*}
We can observe in particular using the definition of the scheme (\ref{6.1}) and (\ref{6.2}) that the strength of the shock waves which cross $J_1\setminus J_2$ are issue of the two Riemann problem with left and right states $[(\varrho_{n+\frac{1}{2},k},u_{n+\frac{1}{2},k}),(\varrho_{n,k+1},u_{n,k+1})] $ and $[(\varrho_{n,k+1},u_{n,k+1}),(\varrho_{n+\frac{1}{2},k+2},u_{n+\frac{1}{2},k+2}) $. We remark also that the strength of the shock waves which cross $J_2\setminus J_1$ are issue of the Riemann problem with left and right state $[(\varrho_{n+1,k},u_{n+1,k}), (\varrho_{n+1,k+2},u_{n+1,k+2})]$.
\\
Now we are going to define $\tilde{J_2}$  which is the same $I$ curves as $J_2$ except that the strength of the shock wave which cross  $\tilde{J_2}\setminus J_1$  is coming from the Riemann problem with left state  $(\varrho_{n+1,k},u_{n+1,k})$ and with right state $(\varrho_{n+1,k+2},u_{n+1,k+2})$ when $\de_{n+1,k}=\de_{n+1,k+2}=0$. 
It is natural now to consider the quantities $F(\tilde{J}_2),V(\tilde{J}_2)$ when $\de_{n+1,k}=\de_{n+1,k+2}=0$. Note that if $\de_{n+1,k},\de_{n+1,k+2}$ does not contribute in solution then the changes in $F,V,Q$ for the diamond $D_{n+1,k}$ is same as the homogeneous case done in \cite{Nisi-Smo}. We recall the following Lemma issue from \cite{Nisi-Smo}.
\begin{lemma}
\label{lemSmo}
If $\e F(J_1)$  is sufficiently small, then:
$$F(\tilde{J_2})\leq F(J_1),$$
with $\tilde{J_2}$ is an immediate successor to $J_1$.
\end{lemma}
\begin{remarka}
	Note that by induction hypothesis $F(\bar{J}_{i}),F(O^n)$ satisfy \eqref{estimate-FJi} and \eqref{estimate-FOn} respectively. Since $J_1$ coincides with $\bar{J}_{i}$ for some $i$, we get $4\e CF(J_1)\leq \min\{C_0,1\}$. Therefore, proof of Lemma \ref{lemSmo} follows from \cite[Lemma 5, page-197]{Nisi-Smo}.
\end{remarka}
 Therefore, we can write 
	\begin{equation}
	\tilde{F}(J_2)=\tilde{V}(J_2)+K\tilde{Q}(J_2)\leq F(J_1)=V(J_1)+KQ(J_1).
	\label{cruSmo}
	\end{equation}
	By using Lemma \ref{lemma:estimate-3} we can now evaluate the difference of the strength between $\tilde{J}_2$ and $J_2$ on $J_2\setminus J_1$
	\begin{align}
	V(J_2)-V(\tilde{J}_2)&\leq 2\abs{\de_{n+1,k+2}-\de_{n+1,k}},\nonumber\\
	Q(J_2)-Q(\tilde{J}_2)&=\sum_{\beta\notin J_2\setminus J_1, \beta\;\mbox{approaching}\;\gamma'_2}|\beta||\gamma'_2|+\sum_{\beta\notin J_2\setminus J_1, \beta\;\mbox{approaching}\;\beta'_2}|\beta||\beta'_2|\\
	&-\sum_{\beta\notin J_2\setminus J_1, \beta\;\mbox{approaching}\;\gamma'_2}|\beta||\tilde{\gamma}_2|-\sum_{\beta\notin J_2\setminus J_1, \beta\;\mbox{approaching}\;\beta'_2}|\beta||\tilde{\beta}_2|\nonumber\\
	&\leq 2\abs{\de_{n+1,k+2}-\de_{n+1,k}} \sum_{\beta\in J_1,\notin J_1\setminus J_2}|\beta|.\nonumber
	\end{align}
	Above $\gamma_2$, $\beta_2$ and $\tilde{\gamma}_2$, $\tilde{\beta}_2$ are respectively the shock waves crossing respectively $J_2$ and $\tilde{J}_2$ on $J_2\setminus J_1$. Hence,
	\begin{equation}
	F(J_2)-F(\tilde{J}_2)\leq 2\abs{\de_{n+1,k+2}-\de_{n+1,k}}+2K \abs{\de_{n+1,k+2}-\de_{n+1,k}} \sum_{\beta\in J_1,\notin J_1\setminus J_2}|\beta|.
	\label{cruSmo1}
	\end{equation}
	Using (\ref{cruSmo}) and (\ref{cruSmo1}) we deduce that:
	\begin{equation}
	F(J_2)\leq F(J_1)+2\abs{\de_{n+1,k+2}-\de_{n+1,k}}+2K\abs{\de_{n+1,k+2}-\de_{n+1,k}} \sum_{\beta\in J_1,\notin J_1\setminus J_2}|\beta|.
	\label{cruSmo2}
	\end{equation}
%
%
%
Let $a_{n,i},b_{n,i}$ be defined as 
		\begin{align}
	a_{n,i}=\left[\mbox{exp}(-\si_{i}^\De\De t)-1\right]u_{n+\frac{1}{2},i} \mbox{ and }b_{n,i}=\frac{q}{m}\frac{1-\mbox{exp}(-\si_{i}^\De\De t)}{\si^\De_{i}}\Psi_{n+1,i}.
	\end{align}
	We observe in particular that $\delta_{n,i}=a_{n,i}+b_{n,i}$.
	From triangular inequality we get
	\begin{equation}
	\abs{a_{n,i+2}-a_{n,i}}\leq \De t\abs{\si_{i+2}-\si_{i}}\abs{u_{n+\frac{1}{2},i+2}}+\De t\abs{\si_{i}}\abs{u_{n+\frac{1}{2},i+2}-u_{n+\frac{1}{2},i}}.
	\label{72}
	\end{equation}
	Similarly, we obtain
	\begin{align}
	\abs{b_{n,i+2}-b_{n,i}}
	&\leq \frac{q}{m}\abs{\frac{1-\mbox{exp}(-\si_{i+2}^\De\De t)}{\si^\De_{i+2}}-\frac{1-\mbox{exp}(-\si_{i}^\De\De t)}{\si^\De_{i}}}\abs{\Psi_{n+1,i+2}}\nonumber\\
	&+ \frac{q}{m}\abs{\frac{1-\mbox{exp}(-\si_{i}^\De\De t)}{\si^\De_{i}}}\abs{\Psi_{n+1,i+2}-\Psi_{n+1,i}}\nonumber\\
	&\leq \frac{q}{m}\abs{\Psi_{n+1,i+2}}\abs{\si^{\De}_{i+2}-\si^{\De}_{i}}\De t^2+\frac{q}{m}\abs{\Psi_{n+1,i+2}-\Psi_{n+1,i}}\De t.
	\label{73}
	\end{align}
	Combining (\ref{cruSmo2}), (\ref{72}) and (\ref{73}) and the fact that $V(J_1)\leq F(J_1)$, we have
		\begin{align}
	&F(J_2)-F(J_1)\nonumber\\
&\leq\biggl( 2\abs{u_{n+\frac{1}{2},k+2}}\abs{\si_{k+2}-\si_{k}}\De t+2\abs{\si_{k}}\abs{u_{n+\frac{1}{2},k+2}-u_{n+\frac{1}{2},k}}\De t\nonumber\\
	&+2\left[\frac{q}{m}\abs{\Psi_{n+1,k+2}}\abs{\si^{\De}_{k+2}-\si^{\De}_{k}}\De t^2+\frac{q}{m}\abs{\Psi_{n+1,k+2}-\Psi_{n+1,k}}\De t\right]\biggl)(1+K \sum_{\beta\in J_1,\notin J_1\setminus J_2}|\beta|).\label{estimate-1}
	\end{align}
%
	We can now estimate $\abs{u_{n+\frac{1}{2},k+2}-u_{n+\frac{1}{2},k}}$ in terms of the strength of the shock wave crossing  $J_1$ by using (\ref{techimp}) (again we assume that $J_1$ is an curve which is a line $k^+\De t$, $k^-\De t$ at infinity)
$$\abs{u_{n+\frac{1}{2},k+2}-u_{n+\frac{1}{2},k}}\leq 4V(J_1)+C'_T\leq 4 F(J_1)+C'_T. $$
Now using Lemma \ref{lemma1} and (\ref{techimp}) we have $\abs{u_{n+\frac{1}{2},k+2}}\leq |u_n^-|+TV(u^\De_{| O_n})\leq C_T+4V(J_1)\leq C_T+4 F(J_1)$. Here $C_T$ is a constant depending only on $T$.
 Then it yields from (\ref{estimate-1}) and choice of $K$:
\begin{align}
&F(J_2)-F(J_1)\nonumber\\
&\leq \bigg( 2C_T\abs{\si_{k+2}-\si_{k}}\De t+8\abs{\si_{k+2}-\si_{k}}\De tF(J_1)+8\abs{\si_{k}}\De tF(J_1)+8\abs{\si_{k}}\De t \,C'_T\nonumber\\
&+2\left[\frac{q}{m}\abs{\Psi_{n+1,k+2}}\abs{\si^{\De}_{k+2}-\si^{\De}_{k}}\De t^2+\frac{q}{m}\abs{\Psi_{n+1,k+2}-\Psi_{n+1,k}}\De t\right]\bigg)(1+4C\e F(J_1)).\label{est-FJj}
\end{align}
Now we set $J_1=\bar{J}_j$ and $J_2=\bar{J}_{j+1}$. Note that by our assumption, $F(\bar{J}_j)$ satisfies \eqref{estimate-FJi} and $F(O^n)$ satisfies \eqref{estimate-FOn}. Due to the choice of $\e$ and the fact $m\De x\leq 2L+2(n+1)\De x\leq 2L+\frac{2T}{\lambda}=L_1$, we have
\begin{align}
1+KF(\bar{J}_j)&\leq 1+4C\e\left[e^{\la A L_1}F(O^n)+B\frac{e^{\la AL_1}-1}{A}\right]\nonumber\\
&\leq 1+4C\e\left[e^{\la A L_1}e^{A_1n\De t}F(O_1)+e^{\la A L_1}B_1\frac{e^{A_1n\De t}-1}{A_1}+B\frac{e^{\la AL_1}-1}{A}\right]\nonumber\\
&\leq 2.\label{est-KFJj}
\end{align}
Hence, we have
\begin{align}
&F(\bar{J}_{j+1})-F(\bar{J}_j)\nonumber\\
&\leq \bigg( 2C_T\abs{\si_{k+2}-\si_{k}}\De t+8\abs{\si_{k+2}-\si_{k}}\De tF(\bar{J}_j)+8\abs{\si_{k}}\De tF(\bar{J}_j)+8\abs{\si_{k}}\De t \,C'_T\nonumber\\
&+2\left[\frac{q}{m}\abs{\Psi_{n+1,k+2}}\abs{\si^{\De}_{k+2}-\si^{\De}_{k}}\De t^2+\frac{q}{m}\abs{\Psi_{n+1,k+2}-\Psi_{n+1,k}}\De t\right]\bigg)(1+KF(\bar{J}_j))\nonumber\\
&\leq 8(C_T+C'_T)\norm{\si^\De }_{\f}\De t+32\norm{\si^\De}_\f\De tF(\bar{J}_j)+\frac{8q}{m}\left[\norm{\Psi^\De}_{\f}\norm{\si^\De}_\f\De t+\norm{\Psi^\De}_\f\right]\De t.
\end{align}
By choice of $A,B$ as in \eqref{def:A-B} we have using the recurrence hypothesis (\ref{estimate-FJi}) at the level $j$
\begin{align*}
F(\bar{J}_{j+1})&\leq (1+A\De t)F(\bar{J}_{j})+B\De t\\
&\leq (1+A\De t)e^{\la Aj\De x}F(O^n)+B\De t\sum\limits_{i=1}^{j}(1+A\De t)^i+B\De t\\
&\leq e^{\la A(j+1)\De x}F(O^n)+B\De t\sum\limits_{i=0}^{j}(1+A\De t)^i.
\end{align*}
This completes proof of Claim \ref{claim-1}.

	\end{proof}
    From \eqref{est-FJj} with $J_1=\bar{J}_{k},J_2=\bar{J}_{k+1}$, we obtain
    \begin{align}
&F(\bar{J}_{k+1})-F(\bar{J}_{k})\nonumber\\
&\leq\biggl( 2\abs{u_{n+\frac{1}{2},k+2}}\abs{\si_{k+2}-\si_{k}}\De t+2\abs{\si_{k}}\abs{u_{n+\frac{1}{2},k+2}-u_{n+\frac{1}{2},k}}\De t\nonumber\\
	&+2\left[\frac{q}{m}\abs{\Psi_{n+1,k+2}}\abs{\si^{\De}_{k+2}-\si^{\De}_{k}}\De t^2+\frac{q}{m}\abs{\Psi_{n+1,k+2}-\Psi_{n+1,k}}\De t\right]\biggl)(1+K \sum_{\beta\in J_1,\notin J_1\setminus J_2}|\beta|).\nonumber\\
\end{align}
	Using  Claim \ref{claim-1}, \eqref{est-KFJj} and the fact that $\abs{u_{n+\frac{1}{2},k+2}}\leq C_T+4 F(J_1)$ we obtain
	\begin{equation}
	\begin{aligned}
	&F(\bar{J}_{k+1})-F(\bar{J}_k)\\
    &\leq 4C_T\abs{\si_{k+2}-\si_{k}}\De t+16\left(e^{\la A L_1} F(O^n)+B\frac{e^{A\la L_1}-1}{A}\right)\abs{\si_{k+2}-\si_{k}}\De t\\
	&+4\norm{\si^\De}_\f\abs{u_{n+\frac{1}{2},k+2}-u_{n+\frac{1}{2},k}}\De t+\frac{4q}{m}\norm{\Psi^\De}_\f\abs{\si^{\De}_{k+2}-\si^{\De}_{k}}\De t^2\\
	&+\frac{4q}{m}\abs{\Psi_{n+1,k+2}-\Psi_{n+1,k+1}}\De t.
	\end{aligned}
	\label{ncru}
	\end{equation}
	Therefore by summing (\ref{ncru}) that we combine with (\ref{techimp}), it gives
	\begin{align}
    &F(O^{n+1})-F(O^n)\nonumber\\
	&\leq 4C_TTV(\si^\De )\De t+4\left(e^{\la A L_1} F(O^n)+B\frac{e^{A\la L_1}-1}{A}\right)TV(\si^\De )\De t\nonumber\\
	&+4\norm{\si^\De}_\f (4 F(O^n)+C'_T)\De t+\frac{4q}{m}\norm{\Psi_{n+1}^\De}_\f TV(\si^\De)\De t^2+\frac{4q}{m}TV(\Psi^\De_{n+1})\De t.\nonumber
	\end{align}
	Let $A_1,B_1>0$ be as in \eqref{def:A1}, \eqref{def:B1}, then we get
	\begin{align}
	A_1&\geq 4e^{\la A  L_1}TV(\si^\De )+16 \norm{\si^\De}_\f,\nonumber\\
	B_1&\geq 4C_TTV(\si^\De )+4B\frac{e^{A\la L_1}-1}{A}TV(\si^\De )+4 C'_T  \norm{\si^\De}_\f+\frac{4q}{m}\norm{\Psi_{n+1}^\De}_\f TV(\si^\De)\De t\nonumber\\
	&+\frac{4q}{m}TV(\Psi^\De_{n+1}).\nonumber
	\end{align}
	Hence,
	\begin{equation*}
	F(O^{n+1})\leq F(O^n)(1+A_1\De t)+B_1\De t.
	\end{equation*}
	This completes the proof of Lemma \ref{lemma:BV-bound}.
\end{proof}
 Once we obtain \eqref{estimate-FOn}, subsequently, we have
\begin{align}
F(O^{n})&\leq e^{A_1n\De t}F(O_1)+B_1\frac{e^{A_1n\De t}-1}{A_1}\nonumber\\
&\leq e^{A_1T}F(O_1)+B_1\frac{e^{A_1T}-1}{A_1}\quad\quad\mbox{ for }1\leq n\leq \frac{T}{\De t}.
\label{supercru}
\end{align}

	
	\begin{remark}\label{remark-bd-psi-u}
		From \eqref{supercru}, we have $\norm{(\varrho^\De,u^\De)}_{L^{\f}(\R\times[0,T])}\leq C(T,\varrho_0,u_0)$. By \eqref{25}, \eqref{26} we get  then we have uniform bound of $\gamma_{n},\xi_n$ depending only on $T,\varrho_0$ and $u_0$. Subsequently, we obtain $\abs{\Psi^\De_{n,i+2}-\Psi^\De_{n,i}}\leq \tilde{C}\De x$ and $\abs{u_{n+1,j}-u_{n+\frac{1}{2},j}}\leq C^*\De t$.
	\end{remark}
	
	For uniform lower bound of approximate density $\varrho^\De$, we have the following lemma.
	\begin{lemma}
		Let $\bar{\e}_0>0$ be satisfying the following
		\begin{equation*}
		\sup\limits_{x\in\R}s(\varrho_0(x),u_0(x))-\inf\limits_{x\in\R}r(\varrho_0(x),u_0(x))+2C^*T<\frac{\sqrt{\gamma}}{\bar{\e}_0}
		\end{equation*}
		where $C^*$ is as in Remark \ref{remark-bd-psi-u}.	Then there exists $\underline{\underline{\varrho}}>0$ such that $\varrho^\De(x,t)\geq \underline{\underline{\varrho}}$ for a.e. $(x,t)\in\R\times[0,T]$.
	\end{lemma}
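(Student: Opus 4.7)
My plan is to track the infimum of $r$ and the supremum of $s$ across one full time step of the splitting scheme and then iterate, converting the resulting control on $s-r$ into a positivity bound on $\varrho^{\Delta,\theta}$ through the identity $s(\varrho,u)-r(\varrho,u)=2\sqrt{\gamma}(\varrho^\epsilon-1)/\epsilon$. On each homogeneous sub-step $(t_n,t_{n+1/2}]$ the CFL restriction on $\lambda$ prevents adjacent Riemann fans from interacting, so Lemma \ref{lemma:Euler-1} can be applied to each fan separately and gives the pointwise bounds $r(x,t)\geq\min(r_-,r_+)$ and $s(x,t)\leq\max(s_-,s_+)$ within the corresponding diamond. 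Taking the infimum and supremum over $x$ this translates into the monotonicity
\begin{equation*}
\inf_x r^{\Delta,\theta}(\cdot,t_{n+1/2})\geq\inf_x r^{\Delta,\theta}(\cdot,t_n),\qquad \sup_x s^{\Delta,\theta}(\cdot,t_{n+1/2})\leq\sup_x s^{\Delta,\theta}(\cdot,t_n).
\end{equation*}

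In the source sub-step, $\varrho^{\Delta,\theta}$ is preserved while $u_{n+1,i}=u_{n+1/2,i}+\delta_{n+1,i}$ with $|\delta_{n+1,i}|\leq C^{*}\Delta t$ by Remark \ref{remark-bd-psi-u}. Since both $r$ and $s$ depend linearly on $u$ with coefficient one, each shifts by exactly $\delta_{n+1,i}$ at the grid point $i$; consequently $\inf_x r$ can decrease by at most $C^{*}\Delta t$ and $\sup_x s$ can increase by at most $C^{*}\Delta t$ across the full step. Iterating for $n$ steps with $n\Delta t\leq T$ and using the hypothesis yields, for every $t_n\leq T$,
\begin{equation*}
\sup_x s^{\Delta,\theta}(\cdot,t_n)-\inf_x r^{\Delta,\theta}(\cdot,t_n)\leq \sup_x s_0-\inf_x r_0+2C^{*}T<\frac{\sqrt{\gamma}}{\bar{\epsilon}_0}.
\end{equation*}

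To close the argument I would combine the displayed inequality with the uniform BV control of $(r^{\Delta,\theta},s^{\Delta,\theta})$ supplied by Remark \ref{remimp1a} together with Lemma \ref{lemma:BV-bound}, and with the uniform far-field bounds on $u^\pm_n$ from Lemma \ref{lemma1}. These together confine the pair $(r^{\Delta,\theta}(x,t_n),s^{\Delta,\theta}(x,t_n))$ to a compact subset of the $(r,s)$-plane whose distance to the vacuum curve $s=r-2\sqrt{\gamma}/\epsilon$ remains bounded below by a strictly positive constant whenever $\epsilon\leq\bar{\epsilon}_0$. Inverting $\varrho^\epsilon=1+\epsilon(s-r)/(2\sqrt{\gamma})$ then furnishes a uniform positive lower bound $\varrho^{\Delta,\theta}(x,t)\geq\underline{\underline{\varrho}}$ on $\mathbb{R}\times[0,T]$. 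The most delicate step is the first one: the $L^\infty$-monotonicity of $\inf_x r$ and $\sup_x s$ across one homogeneous sub-step requires that no two Riemann fans overlap on $[t_n,t_{n+1/2}]$, which is exactly the CFL condition encoded in the choice of $\lambda$ and which is what ties the smallness of $\epsilon$ to the chosen horizon $T$.
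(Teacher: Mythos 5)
Your first three steps coincide with the paper's argument: Lemma \ref{lemma:Euler-1} gives the monotonicity of $\inf_x r$ and $\sup_x s$ across each homogeneous sub-step, the source sub-step shifts $u$ (hence $r$ and $s$) by $\delta_{n+1,i}$ with $|\delta_{n+1,i}|\le C^*\Delta t$, and iterating over $n\le T/\Delta t$ yields exactly the bound $\sup_x s-\inf_x r\le \sup_x s_0-\inf_x r_0+2C^*T<\sqrt{\gamma}/\bar{\epsilon}_0$ that the paper also produces. The divergence, and the gap, is in how you convert this into positivity of the density.

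Vacuum is avoided when $s-r$ is bounded \emph{below} away from $-2\sqrt{\gamma}/\epsilon$; more precisely, the quantity that controls the intermediate densities of the Riemann fans is $s_{n,j}-r_{n,j+2}$ (left value of $s$ minus right value of $r$), and one needs a lower bound on it. Your displayed inequality is an \emph{upper} bound on $\sup_x s-\inf_x r$, which pointwise only gives $s(x)-r(y)\le \sqrt{\gamma}/\bar{\epsilon}_0$, i.e.\ an upper bound on the density, not a lower one. The bridge you propose --- BV bounds plus far-field control confining $(r,s)$ to a compact set at positive distance from the line $s=r-2\sqrt{\gamma}/\epsilon$ --- does not close this: the constraints you actually have ($r\ge \inf_x r_0-C^*T$ from below, $s\le \sup_x s_0+C^*T$ from above) describe a quadrant that intersects the vacuum line, and a compact subset of that quadrant can still touch it, since the vacuum states $(u+\sqrt{\gamma}/\epsilon,\,u-\sqrt{\gamma}/\epsilon)$ are finite points. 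What is missing is the transfer of the lower bound from $r$ to $s$ and of the upper bound from $s$ to $r$; the paper does this through the pointwise sandwich $\inf_x r_0-C^*n\Delta t\le r_{n,j}\le s_{n,j}\le \sup_x s_0+C^*n\Delta t$ (i.e.\ using $r\le s$ at each state), which then gives
\begin{equation*}
s_{n,j}-r_{n,j+2}\ \ge\ \inf_x r_0-\sup_x s_0-2C^*T\ >\ -\frac{\sqrt{\gamma}}{\bar{\epsilon}_0},
\end{equation*}
a bound independent of $n$ and $\Delta t$, from which the uniform $\underline{\underline{\varrho}}>0$ follows by inverting $s-r=2\sqrt{\gamma}(\varrho^{\epsilon}-1)/\epsilon$. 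You need to supply this ordering step (or an equivalent device) explicitly; without it the final inequality points the wrong way.
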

	\begin{proof}
		Since in the first step of approximation, we solve Riemann data for homogeneous problem \eqref{eqn-Euler-1}--\eqref{eqn-Euler-2}, by Lemma \ref{lemma:Euler-1} 
		\begin{align}
		\inf\limits_{j\in\mathbb{Z}}r(\varrho_{n,j},u_{n,j})&\leq r(\varrho_{n+\frac{1}{2},j},u_{n+\frac{1}{2},j})\nonumber\\
		&\leq s(\varrho_{n+\frac{1}{2},j},u_{n+\frac{1}{2},j})\leq \sup\limits_{j\in\mathbb{Z}}s(\varrho_{n,j}(x),u_{n,j}(x)),\nonumber
		\end{align}
		for all $j\in\mathbb{Z}$. In the second step $\varrho$-variable remains same and $u$-variable is changed by $\de_{n,j}=u_{n+\frac{1}{2},j}-u_{n,j}$. By Remark \ref{remark-bd-psi-u}, $\abs{\de_{n,j}}\leq C^*\De t$. Therefore,
		\begin{align*}
		\inf\limits_{x\in\R}r(\varrho_{n,j}(x),u_{n,j}(x))-C^*\De t&\leq r(\varrho_{n+1,j}(x),u_{n+1,j}(x))\\
		&\leq s(\varrho_{n+1,j}(x),u_{n+1,j})\leq \sup\limits_{x\in\R}s(\varrho_{n,j}(x),u_{n,j}(x))+C^*\De t.
		\end{align*}
		Hence, we have
		\begin{align*}
		\inf\limits_{x\in\R}r(\varrho_0(x),u_0(x))-C^*n\De t&\leq r(\varrho_{n,j}(x),u_{n,j}(x))\\
		&\leq s(\varrho_{n,j}(x),u_{n,j})\leq \sup\limits_{x\in\R}s(\varrho_0(x),u_0(x))+C^*n\De t,
		\end{align*}
		equivalently,
		\begin{align}
		&s(\varrho_{n,j}(x),u_{n,j})-r(\varrho_{n,j+2}(x),u_{n,j+2})\nonumber\\
		&\geq\inf\limits_{x\in\R}r(\varrho_0(x),u_0(x))-\sup\limits_{x\in\R}s(\varrho_0(x),u_0(x))-2C^*T\nonumber\\
		&>-\frac{\sqrt{\gamma}}{\bar{\e}_0}.\label{lower-bound-cal1}
		\end{align}
		Since the bound in \eqref{lower-bound-cal1} does not depend on $n$ and $\De t$, we get a uniform lower bound $\underline{\underline{\varrho}}>0$ such that $\varrho_{n+\frac{1}{2},j+1}\geq \underline{\underline{\varrho}}$ for all $n\geq0, j\in\mathbb{Z}$.
	\end{proof}

\subsection{Proof of Theorem \ref{theorem-1}}
Note that from (\ref{supercru}) and Remark \ref{remimp1a}
we have a uniform $BV$ bound in $n$ of the sequence of functions $(\varrho^{\De,\theta}(t_n,\cdot),u^{\De,\theta}(t_n,\cdot))$. 
We deduce solving the different Riemann problems that for any $t\in[0,T]$ there exists $C_{1,T}$ such that for any sequence $\theta$:
\begin{equation}
\|(\varrho^{\De,\theta}(t,\cdot),u^{\De,\theta}(t,\cdot))\|_{TV(\R)}\leq C_{1,T}\;\;\mbox{for}\;0<t\leq T.
\end{equation}
Similarly from (\ref{L-infty-varrho}), (\ref{L-infty-u}), (\ref{supercru}) and the resolution of the Riemann problem there exists $C_{2,T}>0$ such that for any $t\in[0,T]$ we have:
\begin{equation}
\|(\varrho^{\De,\theta}(t,\cdot),u^{\De,\theta}(t,\cdot))\|_{L^\infty(\R)}\leq C_{2,T}.
\end{equation}
This last estimate implies that there exists $C_{3,T}>0$ such that:
\begin{equation}
\max( \|\lambda_1(\rho^{\De,\theta})\|_{L^\infty([0,T]\times\R)},\| \lambda_2(\rho^{\De,\theta})\|_{L^\infty([0,T]\times\R)})\leq C_{3,T}
\label{CFLa}
\end{equation}
We can now fix the CFL condition on $\lambda$, indeed we wish to solve each Riemann problem at any time $t_n=n\De t\leq T$ such that there is no interaction between the different Riemann problem. It suffices to choose $\lambda$ such that:
\begin{equation}
\lambda<\frac{1}{C_{3,T}}.
\label{CFL}
\end{equation}
Now the rest of the proof is similar to the one given in \cite{P-R-V}, in particular $(\varrho^{\De,\theta},u^{\De,\theta},\Psi^{\De,\theta})$ converges weakly when $\De x$ goes to $0$ to a weak solution $(\varrho,u,\Psi)$ of the Euler Poisson system provided that the sequence $\theta$ is suitably chosen (indeed the convergence is true for almost every sequence $\theta$ for the uniform probability measured $d\nu$, product
of the uniform measures $d m_j = \frac{1}{2} da_j$ on each factor $(-1, 1)$).

\section{Initial boundary value problem}\label{sec:IBVP}
In this section, we discuss the initial boundary value problem for the Euler-Poisson system \eqref{eqn-EP-1}--\eqref{eqn-EP-3}. More precisely, we consider the system \eqref{eqn-EP-1}--\eqref{eqn-EP-3} in the domain $(0,\f)\times(0,\f)$ and the following additional initial and boundary condition.  
\begin{equation}\label{eq:initial-boundary-data}
	\left\{\begin{array}{rll}
		(\varrho,u)(0,x)&=(\varrho_0,u_0)(x),&x\in(0,\f),\\
		\varrho u(t,0)&=m_b(t), &t\in(0,\f).
	\end{array}\right.
\end{equation}
The corresponding weak formulation is as below:
\begin{equation}\label{def:wk-soln-IBVP-1}
	\int\limits_{0}^{\f}\int\limits_{0}^{\f}\left[\varrho\pa_t\varphi+\varrho u\pa_x\varphi\right]\,dxdt+\int\limits_{0}^{\f}\varrho_0(\cdot)\varphi(0,\cdot)\,dx+\int\limits_{0}^{\f}m_b(t)\varphi(t,0)\,dt=0,
\end{equation}
for $\varphi\in C^1_c([0,\f)\times[0,\f))$ and 
\begin{equation}\label{def:wk-soln-IBVP-2}
	\int\limits_{0}^{\f}\int\limits_{0}^{\f}\left[\varrho u\pa_t\psi+(\varrho u^2+P(\varrho))\pa_x\psi+(\si \varrho u+\frac{q}{m}\varrho\Psi)\psi \right]\,dxdt+\int\limits_{0}^{\f}\varrho_0u_0(\cdot)\psi(0,\cdot)\,dx=0,
\end{equation}
for $\psi\in C^1_c([0,\f)\times[0,\f))$ with $\psi(t,0)=0$ for all $t\geq0$ along with $$\Psi(t,x)=\Psi^-(t)-\frac{q}{e}\int\limits_{0}^{x}(\varrho-\mu)\,dy.$$ 
Similar to the Cauchy problem, we define the far field condition as follows.
%
%
\begin{align}
	&\mu(x)=
		\mu^+\mbox{ for }x>L,\quad
	\si(x)=\si^+\mbox{ for }x>L,\label{IBVP-eq:bdy-cond1}\\
	&\varrho_0(x)=\mu^+\mbox{ for }x>L,
	\quad
	u_0(x)=u^+\mbox{ for }x>L,\label{IBVP-eq:bdy-cond2}
\end{align}
for some $L>0$. We prove that these properties are preserved for any positive time $t\geq0$:
\begin{equation}\label{IBVP-3}
		\varrho(t,x)=\mu^+,\;t>0,\;x<L(t),\quad u(t,x)=u^+(t),t>0,x>L(t),
\end{equation}
with $L(t)$ depending on the time $t$.
Therefore (\ref{eqn-EP-3}) yields:
\begin{equation}
	\Psi(t,x)=\Psi^-(t)-\int^x_{0}\frac{q}{e}(\varrho(t,y)-\mu(y))dy\mbox{ for } t>0,x\in(0,\f).
\end{equation}
The electric field at $x=0$, $\Psi^-(t)$ is assumed to be known. Similar to the Cauchy problem as discussed in Section \ref{sec:intro} we can compute the values of $\varrho$, $u$, and $\Psi$ at $x=+\f$.
\begin{equation}
	\begin{cases}
		\begin{aligned}
			&
			u^{+}(t)=u_0^{+}-\frac{q}{e}\int^t_0e^{-\sigma^+(t-s)}\Psi^+(s) ds,\\
			&\Psi^+(t)=\Psi^+(0)+\int^t_0\frac{q}{e}(\mu^+u^+(s)-m_1(s))ds+\Psi^-(t)-\Psi^-(0),\\
			&\Psi^+(0)=\Psi^-(0)-\int^{+\infty}_{0}\frac{q}{e}(\varrho_0(y)-\mu(y))dy.
		\end{aligned}
	\end{cases}
\end{equation}
Now we are ready to state our result on existence of BV solution to initial boundary value problem for Euler-Poisson system in 1-D.
\begin{theorem}\label{theorem-2}
	Let $T>0$ and $(\varrho_0,u_0)$ such that  $(\varrho_0,u_0)\in BV((0,\f),\R_+\times\R)$ and there exists $M,\bar{\varrho},\underline{\varrho}\in(0,\f)$ such that
	$$
	\abs{u_0(x)}\leq M,\,0<\underline{\varrho}\leq \varrho_0(x)\leq \bar{\varrho}\mbox{ for a.e. }x\in(0,\f).
	$$
	Furthermore $\sigma,\mu$, $m_b$ and $\Psi^-$ verify the following assumptions:
	\begin{itemize}
		\item $\sigma\geq 0$, $\sigma\in BV(0,\f)$, $\mu \geq 0$, $\mu  \in BV(0,\f)$,
		\item $m_b\geq c_0>0$ and $m_b\in BV(0,\f)$,
		\item $\Psi^-\in BV(0,\infty)$.
	\end{itemize} Then there exists $\gamma_0\in(1,2)$ such that the following holds: for any $\gamma\in(1,\gamma_0]$
	there exists a BV weak solution (in the sense of \eqref{def:wk-soln-IBVP-1}--\eqref{def:wk-soln-IBVP-2}) $(\varrho,u,\Psi)$ of \eqref{eqn-EP-1}--\eqref{eqn-EP-3} and \eqref{eq:initial-boundary-data} satisfying \eqref{IBVP-eq:bdy-cond1}--\eqref{IBVP-eq:bdy-cond2} on $[0,T]$.
\end{theorem}
The initial-boundary value problem for the isentropic Euler equation in Lagrangian variables has been studied in \cite{Nishida,NS-bdy} with the boundary condition as in \eqref{eq:initial-boundary-data}. In \cite{Nishida,NS-bdy}, the existence of BV solutions has been shown for large initial data. For general $n\times n$ hyperbolic system of conservation laws, the initial-boundary problem has been studied \cite{Amadori} for initial and boundary data with small total variation. Stability of solutions with respect to initial and boundary data has been studied in \cite{AC,DM-1}. 

Proof of Theorem \ref{theorem-2} follows similarly to that of Theorem \ref{theorem-1} with an appropriate adaptation of Glimm scheme in quarter plane. Here, we point out the changes and omit the details as the rest follows by exactly same arguments. 

Let $x_i,t_n$ be the space and time grid points defined as in section \ref{sec:scheme}. We recall the definition of intervals $I_{n,i}$ as $I_{n,i}:=(x_{i-1},x_{i+1})$ for $i$ is integer $i\geq1$ such that $n+i$ is even. We additionally define, $I_{n,0}$ is defined as $I_{n,0}:=(0,x_{1})$ when $n$ is even. Note that 
\begin{equation*}
	(0,\f)=\left\{\begin{array}{ll}
		\bigcup\limits_{i=2j+1,j\in\mathbb{Z},j\geq0}I_{n,i}&\mbox{ when $n$ is odd},\\
		\left(\bigcup\limits_{i=2j,j\in\mathbb{Z},j\geq1}I_{n,i}\right)\cup I_{n,0}&\mbox{ when $n$ is even}.
	\end{array}\right.
\end{equation*} Consider $\varrho^\De_0,u^\De_0,\sigma^\De,\mu^\De$ as in \eqref{def:approx-initial-1}--\eqref{def:approx-initial-2}. We consider $\Psi^\pm_n$ be defined as in \eqref{12}. The discretization of the velocity at the positive infinity is given by:
\begin{align}
	u^{+}_{n+1}=u^+_{n}\mbox{exp}(-\si^+\De t)-\frac{q}{e}\frac{1-\mbox{exp}(-\si^+\De t)}{\si^+}\Psi^+_{n+1}\mbox{ for }n\geq0.
\end{align}
We can approximate the boundary data $m_b$ as below.
\begin{equation}
	m_b^\De(t):=\sum\limits_{n\geq0}m_b^n\chi_{(n\De t,(n+1)\De t)}(t)\mbox{ where }m_b^n=m_b\left(\frac{(2n+1)\De t}{2}\right)\mbox{ for }n\geq0.
\end{equation}
Next, our goal is to define $(\varrho^\De_{n+1},u^\De_{n+1})$. Recall the notation $\mathcal{R}[U_l,U_r](t,x)$ representing the solution to the Riemann problem \eqref{eqn-Euler-1}--\eqref{eqn-Euler-2} and \eqref{def:Rie-data}. To take into account the boundary let us first consider the following data,
\begin{equation}\label{boundary-Rie}
	(\varrho(0,x),u(0,x))=U_+:=(\varrho_+,u_+),\mbox{ for }x>0\mbox{ and }\varrho(t,0)u(t,0)=\overline{m}_b\mbox{ for }t>0.
\end{equation}
For $t>0,x>0$, we define $\mathcal{R}_b[\overline{m}_b,U_+](t,x)$ as solution of the initial boundary value problem \eqref{eqn-Euler-1}--\eqref{eqn-Euler-2}, \eqref{boundary-Rie} which consists of only 2-wave. We have already made remark on the existence of $\mathcal{R}[U_l,U_R]$ in section \ref{sec:scheme}. Similarly, for any $\varrho_+,u_+$ and $\overline{m}_b\geq c_0$, there exists $\gamma_0>1$ such that $\mathcal{R}_b[\overline{m}_b,U_+]$ exists whenever $\gamma\in(1,\gamma_0)$ (indeed, we avoid the appearance of vacuum in the solution). Note that for $\overline{m}_b\geq c_0>0$ ensure the unique solvability when we allow only 2-wave. We can show this in the following way. First consider the case when $\overline{m}_b>\varrho_+u_+$. If $(\varrho_+,u_+)$ lies on the $S_2$ curve starting from $(\varrho,u)$, then we have  $u-u_+=\varrho_+^\e\sqrt{\frac{(\al-1)(\al^{\ga}-1)}{\al}}=:\varrho_+^\e G(\al)$ with $\al=\frac{\varrho}{\varrho_+}>1$. Observe that $G$ is a strictly increasing continuous function from $[1,\f)$ to $[0,\f)$. Hence, there exists $G^{-1}$ which is strictly increasing and continuous function from $[0,\f)$ to $[1,\f)$. We write $\varrho u=\varrho_+G^{-1}\left(\frac{u-u_+}{\varrho_+^\e}\right)u$. Since for $u>0$, the function $Y(u)=\varrho_+ G^{-1}\left(\frac{u-u_+}{\varrho_+^\e}\right)u$ is strictly increasing and continuous with $\lim\limits_{u\rr\f} Y(u)=\f$ we conclude that $Y$ is a bijective map from $(\max\{0,\varrho_+u_+\},\f)$ to $(\max\{0,\varrho_+u_+\},\f)$. Hence, there exists $u_*>0$ such that $Y(u_*)=\overline{m}_b$ and we recover $\varrho_*$ as $\varrho_*=\overline{m}_b/u_*\in(0,\f)$. The other follows in simpler way as we need to consider the rarefaction curve with $\gamma$ sufficiently close to $1$.  
\begin{remark}\label{remark-counter}
	We remark that even for $m_b\geq c_0>0$, there can be infinitely many solutions to the Riemann type problem \eqref{eqn-Euler-1}--\eqref{eqn-Euler-2}, \eqref{boundary-Rie} if we allow 1-wave to appear in the solution. To show this, we consider $\varrho_+=1,u_+=2$ and $\bar{m}_b=4$ along with $\gamma=1+2\e<2$ or equivalently, $\e\in(0,1/2)$. Then we choose $\varrho_b=\left(\frac{4}{D\sqrt{\gamma}}\right)^\frac{1}{1+\e}$ where $D>1$ will be chosen later. Now, let $D_1\in\left(1,\frac{2^\frac{1-\e}{1+\e}}{(\sqrt{\gamma})^{\frac{1}{1+\e}}}\right)$ and $D$ be a solution to 
\begin{equation}\label{eq-D}
	H(D):=D^{\frac{1}{1+\e}}-\frac{1}{\e}D^{-\frac{\e}{1+\e}}=\left[-\frac{1}{\e}+\frac{2}{D_1\sqrt{\gamma}}\right]\left(\frac{4}{\sqrt{\gamma}}\right)^{-\frac{\e}{1+\e}}.
\end{equation}
We note that $\frac{2}{D_1\sqrt{\gamma}}\leq 2\leq \frac{1}{\e}$ and
\begin{equation*}
	0\geq \left[-\frac{1}{\e}+\frac{2}{D_1\sqrt{\gamma}}\right]\left(\frac{4}{\sqrt{\gamma}}\right)^{-\frac{\e}{1+\e}}=-\frac{1}{\e}\left(\frac{4}{\sqrt{\gamma}}\right)^{-\frac{\e}{1+\e}}+\frac{2^\frac{1-\e}{1+\e}}{D_1(\sqrt{\gamma})^{\frac{1}{1+\e}}}\geq -\frac{1}{\e}+1.
\end{equation*} We also observe that $H(1)=-\frac{1}{\e}+1$ and $H(D)\rr\f$ as $D\rr\f$. Hence, there is a solution $D>1$ satisfying \eqref{eq-D}. From the choice of $D$ and $D_1$, we have
\begin{align*}
	r_b=\frac{\bar{m}_b}{\varrho_b}-\frac{\sqrt{\gamma}}{\e}\left(\varrho_b^\e-1\right)&=\frac{4}{\varrho_b}-\frac{\sqrt{\gamma}}{\e}\left(\varrho_b^\e-1\right)=\sqrt{\gamma}D\varrho_b^{\e}-\frac{\sqrt{\gamma}}{\e}\left(\varrho_b^\e-1\right)\\
	&=\sqrt{\ga}\left[D \left(\frac{4}{D\sqrt{\gamma}}\right)^\frac{\e}{1+\e}-\frac{1}{\e} \left(\frac{4}{D\sqrt{\gamma}}\right)^\frac{\e}{1+\e}+\frac{1}{\e}\right]\\
	&=\frac{2}{D_1}<2=r_+=u_+-\frac{\sqrt{\gamma}}{\e}(\varrho_+^\e-1),\\
	\la_1\left(\varrho_b,\frac{\bar{m}_b}{\varrho_b}\right)=\frac{\bar{m}_b}{\varrho_b}-\sqrt{\gamma}\varrho_b^\e&=\frac{4}{\varrho_b}-\sqrt{\gamma}\varrho_b^\e=\sqrt{\gamma}D\varrho_b^{\e}-\sqrt{\gamma}\varrho_b^\e\geq0.
\end{align*}
Since $r_b<r_+$, the Riemann problem $\mathcal{R}[U_b,U_+]$ with $U_b=(\varrho_b,\bar{m}_b/\varrho_b),U_+=(\varrho_+,u_+)$ consists of 1-rarefaction and a 2-wave. Since $\la_1(U_b)\geq0$, the left most characteristic of the 1-rarefaction has positive slope and hence, $\mathcal{R}[U_b,U_+](t,0)=U_b$. This concludes our remark on infinitely many choices of solution if we allow 1-wave. Note that for each choice of $D_1\in \left(1,\frac{2^\frac{1-\e}{1+\e}}{(\sqrt{\gamma})^{\frac{1}{1+\e}}}\right)$, we get a $D>1$ and hence a $\varrho_b$. Therefore, there are infinitely many choices of $\varrho_b$ and consequently, there are infinitely many solutions.
\end{remark}
Now we are ready to define the solutions at each time step. For this purpose, let us consider a randomly chosen sequence $\{\theta_n\}\subset[-1,1]$. Let $\Psi_{n+1,i}$ be defined in a similar manner as in section \ref{sec:scheme}.

\noindent\underline{When $n$ is even:} for all odd integer $i\geq 3$ we define,
\begin{equation}
	(\varrho_{n+\frac{1}{2},i},u_{n+\frac{1}{2},i})=\mathcal{R}[U_{n,i-1},U_{n,i+1}](\De t,\theta_{n}\De x).
\end{equation}
For $i=1$, we set $(\varrho_{n+\frac{1}{2},1},u_{n+\frac{1}{2},1})=\mathcal{R}[U_b,U_{n,2}](\De t,\theta_{n}\De x)$ where $U_b=\mathcal{R}_b[m_b^n,U_{n,2}](\De t,0)$. Now for $(n+1)$-th step, we can define as in section \ref{sec:scheme}, for all odd integer $i\geq 1$
\begin{equation}
	\varrho_{n+1,i}=\varrho_{n+\frac{1}{2},i}\mbox{ and }u_{n+1,i}=u_{n+\frac{1}{2},i}\mbox{exp}(-\si_{i}^\De\De t)-\frac{q}{e}\frac{1-\mbox{exp}(-\si_{i}^\De\De t)}{\si^\De_{i}}\Psi_{n+1,i}.
\end{equation}

\noindent\underline{When $n$ is odd:} for all even integer $i\geq 2$ we define,
\begin{equation}
	(\varrho_{n+\frac{1}{2},i},u_{n+\frac{1}{2},i})=\mathcal{R}[U_{n,i-1},U_{n,i+1}](\De t,\theta_{n}\De x).
\end{equation}
For $i=0$, we set $(\varrho_{n+\frac{1}{2},0},u_{n+\frac{1}{2},0})=\mathcal{R}_b[m_b^n,U_{n,2}](\De t,0)$. Now for $(n+1)$-th step, we can define as in section \ref{sec:scheme}, for all odd integer $i\geq 0$
\begin{equation}
	\varrho_{n+1,i}=\varrho_{n+\frac{1}{2},i}\mbox{ and }u_{n+1,i}=u_{n+\frac{1}{2},i}\mbox{exp}(-\si_{i}^\De\De t)-\frac{q}{e}\frac{1-\mbox{exp}(-\si_{i}^\De\De t)}{\si^\De_{i}}\Psi_{n+1,i}.
\end{equation}
%

In next two lemmas we show estimate for boundary interaction.
\begin{lemma}\label{lemma-ibvp-1}
	Let $(r_0,s_0),(r_1,s_1)\in\R^2$ be two points in $r$-$s$ plane  correspond to $(\varrho_0,u_0)$ and $(\varrho_1,u_1)$ respectively. Suppose that the Riemann problem $U_L=(\varrho_0,u_0),U_R=(\varrho_1,u_1)$ has solution with 1-wave strength $\si_1$ and 2-wave strength $\si_2$. Suppose the Riemann problem $U_L^\p:=(\varrho,u)$, $U_R=(\varrho_1,u_1)$ consists only 2-shock with strength $\si$. We assume $\varrho u=\varrho_0 u_0\geq c_0>0$. Then we have
	\begin{equation}
		\abs{\si}\leq (\si_1)_-+(\si_2)_-+\e C_0[(\si_1)_-]^2\mbox{ for some }C_0>0.
	\end{equation} 
\end{lemma}
 The Lemma \ref{lemma-ibvp-1} has been proved in \cite{NS-bdy} via a geometrical approach. Here, we give an analytical version of it.
\begin{proof}
	We first consider the case when $\si_1<0$ and $\si_2=0$. Let us define $r=r(\varrho,u)$ and $s=s(\varrho,u)$. Recall that
    \begin{align}
    	s_0-s_1&=\varrho_0^\e\left[\sqrt{\frac{(\al_1-1)(\al_1^{\ga}-1)}{\al_1}}-\sqrt{\ga}\frac{\al_1^{\e}-1}{\e}\right],\mbox{ where }\al_1=\frac{\varrho_1}{\varrho_0},\\
    	r-r_1&=\varrho^\e\left[\sqrt{\frac{(1-\al)(1-\al^{\ga})}{\al}}-\sqrt{\ga}\frac{1-\al^{\e}}{\e}\right],\mbox{ where }\al=\frac{\varrho_1}{\varrho}.
    \end{align}
By a change of variable we can see that
\begin{equation}\label{def:f}
	    	r-r_1=\varrho_1^\e\left[\sqrt{\frac{(\al_2-1)(\al_2^{\ga}-1)}{\al_2}}-\sqrt{\ga}\frac{\al_2^{\e}-1}{\e}\right]=:\varrho_1^\e f(\al_2)\mbox{ where }\al_2=\frac{\varrho}{\varrho_1}.
\end{equation}
We also see that $s_0-s_1=\varrho_1^\e \frac{f(\al_1)}{\al_1^\e}$. Now, we define $\bar{f}$ as
\begin{equation}\label{def:f-bar}
	\bar{f}(\al)=\left[\sqrt{\frac{(\al-1)(\al^{\ga}-1)}{\al}}+\sqrt{\ga}\frac{\al^{\e}-1}{\e}\right]\mbox{ for }\al>1.
\end{equation}
We can check that $f^\p,\bar{f}^\p \geq0$ (see \cite{Nisi-Smo}). Since, $\al_1,\al_2>1$ we have $\varrho>\varrho_1>\varrho_0$. Hence, we have
\begin{equation*}
	r+s-(r_0+s_0)=2u-2u_0=\frac{2\varrho u}{\varrho}-\frac{2\varrho_0u_0}{\varrho_0}=2\varrho_0u_0\left[\frac{1}{\varrho}-\frac{1}{\varrho_0}\right]\leq0.
\end{equation*}
Subsequently,
\begin{align*}
	\abs{\si}-\abs{\si_1}=(s-s_1)-(r_0-r_1)&=s-s_0+s_0-s_1-(r_0-r+r-r_1)\\
	&=(r+s)-(r_0+s_0)+(s_0-s_1)-(r-r_1)\\
	&\leq (s_0-s_1)-(r-r_1).
\end{align*}
Therefore, we get
\begin{equation}\label{est-si-si1}
	\abs{\si}-\abs{\si_1}\leq (s_0-s_1)-(r-r_1)=\varrho_1^\e\left[\frac{f(\al_1)}{\al_1^\e}-f(\al_2)\right].
\end{equation}
It can be seen that for $\al_2>\al_1$ we have $	\abs{\si}-\abs{\si_1}\leq 0$. Now, we consider the case when $\al_2\leq \al_1$. We note that $\varrho_0^\e f(\al_1)=g_1(\abs{\si_1},\varrho_1)$, $\varrho_1^\e f(\al_2)=g_1(\abs{\si},\varrho_0)$ with $\abs{\si_1}=\varrho_0^\e\bar{f}(\al_1)$ and $\abs{\si}=\varrho_1^\e\bar{f}(\al_2)$. Without loss of generality we assume that $\abs{\si_1}\leq\abs{\si}$. Then, from the definition of $\bar{f}$ as in \eqref{def:f-bar}, we get the following inequality as $\varrho_1=\al_1\varrho_0$,
\begin{equation*}
	\frac{1}{\al^\e_1}\left[\sqrt{\frac{(\al_1-1)(\al_1^{\ga}-1)}{\al_1}}+\sqrt{\ga}\frac{\al_1^{\e}-1}{\e}\right]\leq \left[\sqrt{\frac{(\al_2-1)(\al_2^{\ga}-1)}{\al_2}}+\sqrt{\ga}\frac{\al_2^{\e}-1}{\e}\right].
\end{equation*}
Hence, from \eqref{est-si-si1}, we have
\begin{align*}
	&\frac{1}{\varrho_1^\e}(\abs{\si}-\abs{\si_1})\leq \frac{f(\al_1)}{\al_1^\e}-f(\al_2)\\
	&=\frac{1}{\al^\e_1}\left[\sqrt{\frac{(\al_1-1)(\al_1^{\ga}-1)}{\al_1}}-\sqrt{\ga}\frac{\al_1^{\e}-1}{\e}\right]- \left[\sqrt{\frac{(\al_2-1)(\al_2^{\ga}-1)}{\al_2}}-\sqrt{\ga}\frac{\al_2^{\e}-1}{\e}\right]\\
	&=	\frac{1}{\al^\e_1}\left[\sqrt{\frac{(\al_1-1)(\al_1^{\ga}-1)}{\al_1}}+\sqrt{\ga}\frac{\al_1^{\e}-1}{\e}\right]- \left[\sqrt{\frac{(\al_2-1)(\al_2^{\ga}-1)}{\al_2}}+\sqrt{\ga}\frac{\al_2^{\e}-1}{\e}\right]\\
	&-\frac{2\sqrt{\gamma}}{\e}\left[\frac{\al_1^\e-1}{\al_1^\e}-(\al_2^\e-1)\right]\\
	&\leq \frac{2\sqrt{\gamma}}{\e}\left[\al_2^\e+\frac{1}{\al_1^\e}-2\right]\leq \frac{2\sqrt{\gamma}}{\e}(\al_1^\e-1)^2\mbox{ since }\al_2\leq \al_1\mbox{ and }\al_1>1. 
\end{align*} 
We note that $\abs{\al_1-1}\leq C_0\abs{\si_1}$ and $\abs{\al_1^\e-1}^2\leq \e^2\abs{\al_1-1}^2$ for all $\al_1\geq1$. Therefore, we have
\begin{equation*}
	\abs{\si}-\abs{\si_1}\leq \e C_0\abs{\si_1}^2.
\end{equation*}
Rest of the cases follows in a similar way as in Lemma \ref{lemma:estimate-1}--\ref{lemma:estimate-3}.
\end{proof}
By a similar argument as in Lemma \ref{lemma:estimate-1}, we can obtain the following estimate.
\begin{lemma}\label{lemma-ibvp-2}
	Let $(r_0,s_0),(r_1,s_1),(r_2,s_2)\in\R^2$ be two points in $r$-$s$ plane  correspond to $(\varrho_0,u_0)$, $(\varrho_1,u_1)$ and $(\varrho_2,u_2)$ respectively. Suppose that the Riemann problem $U_L=(\varrho_0,u_0)$, $U_R=(\varrho_1,u_1)$ consists of 2-wave strength $\si_2$ and the Riemann problem $\tilde{U}_L:=(\varrho_2,u_2)$, $U_R=(\varrho_1,u_1)$ consists only 2-shock with strength $\si$. We further assume that $\varrho_0u_0,\varrho_2u_2\geq c_0>0$ and $\underline{\varrho}\leq \varrho_0,\varrho_1,\varrho_2\leq \overline{\varrho}$. Then we have
	\begin{equation}\label{estimate-si-1}
		\abs{\si}\leq C_1\abs{\varrho_2u_2-\varrho_0u_0}+(\si_2)_-,
	\end{equation} 
   for some $C_1>0$.
\end{lemma}
\begin{proof}
	We only give a proof for the case when $\si_2<0$. We first note that if  $s_1\leq s_2\leq s_0$ then we have $\abs{\si}=s_2-s_1\leq s_0-s_1=\abs{\si_2}$. Hence, the estimate \eqref{estimate-si-1} follows. Now, we focus on the case when $s_2\geq s_0\geq s_1$. From \eqref{shock-curve-u} and \eqref{shock-curve-2} we have
	\begin{align}
		&u_0-u_1=\varrho_1^\e\sqrt{\frac{(\al_1^{\gamma}-1)(\al_1-1)}{\al_1}},\,u_2-u_1=\varrho_1^\e\sqrt{\frac{(\al_2^{\gamma}-1)(\al_2-1)}{\al_2}},\\
		&s_0-s_1=\varrho_1^\e\left[\sqrt{\frac{(\al_1^{\gamma}-1)(\al_1-1)}{\al_1}}+\frac{\sqrt{\ga}}{\e}(\al_1^\e-1)\right],\\
		&s_2-s_1=\varrho_1^\e\left[\sqrt{\frac{(\al_2^{\gamma}-1)(\al_2-1)}{\al_2}}+\frac{\sqrt{\ga}}{\e}(\al_2^\e-1)\right],
	\end{align}
where $\al_1=\frac{\varrho_0}{\varrho_1}>1$ and $\al_2=\frac{\varrho_2}{\varrho_1}>1$. Note that $s_2-s_1\geq s_0-s_1$. Since the map $\al\mapsto \sqrt{\frac{(\al^{\gamma}-1)(\al-1)}{\al}}+\frac{\sqrt{\ga}}{\e}(\al^\e-1)$ is increasing we have $\al_2\geq \al_1$.  Again by the increasing property of the map $\al\mapsto \sqrt{\frac{(\al^{\gamma}-1)(\al-1)}{\al}}$ we get $u_2-u_1\geq u_0-u_1$, or equivalently, $u_2\geq u_0$. From \eqref{shock-curve-2} we have
\begin{align*}
	&r_0-r_1=\varrho_1^\e\left[\sqrt{\frac{(\al_1^{\gamma}-1)(\al_1-1)}{\al_1}}-\frac{\sqrt{\ga}}{\e}(\al_1^\e-1)\right],\\
	&r_2-r_1=\varrho_1^\e\left[\sqrt{\frac{(\al_2^{\gamma}-1)(\al_2-1)}{\al_2}}-\frac{\sqrt{\ga}}{\e}(\al_2^\e-1)\right].
\end{align*}
Since the function $\al\mapsto\sqrt{\frac{(\al^{\gamma}-1)(\al-1)}{\al}}-\frac{\sqrt{\ga}}{\e}(\al^\e-1)$ we have $r_2-r_1\geq r_0-r_1$ or equivalently, $r_2\geq r_0\geq r_1$.  We consider $r_3=r(\varrho_0,u_2)$ and $s_3=s(\varrho_0,u_2)$, then observe that $r_2+s_2=r_3+s_3=2u_2$ and $s_0-r_0=s_3-r_3=\frac{2\sqrt{\gamma}}{\e}(\varrho_0^{\e}-1)$. Suppose $r+s=2u_2$ line intersects with the line $r=r_0$ at the point $(r_0,\bar{s})$. Then we have $\bar{s}-s_2=r_2-r_0\geq 0$.  Since $s_0-r_0=s_3-r_3$, we get $s_3-s_0=r_3-r_0$. We also note that $r_3+s_3=r_0+\bar{s}$. Subsequently, $\bar{s}-s_3=r_3-r_0$. Hence, we have 
	\begin{equation}\label{eqn-s2s0}
		s_2-s_0\leq \bar{s}-s_0=\bar{s}-s_3+s_3-s_0=2(r_3-r_0)=2(u_2-u_0).
	\end{equation} 
We have already seen that $\al_2\geq \al_1>1$, hence, we get $\varrho_2\geq \varrho_0>\varrho_1$. Since $u_0\geq 0$, we obtain
\begin{align*}
	u_2-u_0=\frac{1}{\varrho_2}\left[\varrho_2 u_2-\varrho_2 u_0\right]&=\frac{1}{\varrho_2}\left[\varrho_2 u_2-\varrho_0 u_0+(\varrho_0-\varrho_2)u_0\right]\\
	&\leq C_1\abs{\varrho_2 u_2-\varrho_0 u_0}\mbox{ since }\varrho_0\leq \varrho_2.
\end{align*}
Therefore, from \eqref{eqn-s2s0}, we get 
\begin{align*}
	\abs{\si}=s_2-s_1=s_2-s_0+s_0-s_1&=s_2-s_0+\abs{\si_2}\\
	&\leq 2(u_2-u_0)+\abs{\si_2}\\
	&\leq 2C_1\abs{\varrho_2u_2-\varrho_0u_0}+\abs{\si_2}.
\end{align*}This completes the proof of Lemma \ref{lemma-ibvp-2}.
\end{proof}
Recall that $\{\theta_n\}_{n\geq1}$ is randomly chosen sequence from $(-1,1)$. We set $\theta_0=0$. For initial boundary value problem, we define $I$-curve as follows: an  $I$-curve is a piece-wise linear, Lipschitz continuous function (contained in $[0,\f)\times[0,\f)$) such that each linear part coincides with one of the following. (see Figure \ref{fig-7})
\begin{enumerate}[(i)]
	\item the line joining $(n\De t,x_i+\theta_n\De x), ((n+1)\De t,x_i+\De x+\theta_{n+1}\De x)$ for $i\geq1$ and $n+i$ is even,
	\item the line joining $(k\De t,x_j+\theta_k\De x), ((k+1)\De t,x_j-\De x+\theta_{k+1}\De x)$ for $j\geq1$ and $k+j$ is even,
	\item the line joining $\left(\left(n+\frac{1}{2}\right)\De t,0\right)$ and $(n\De t,x_1+\theta_n\De x)$ when $n$ is odd,
	\item the line joining $\left(\left(n-\frac{1}{2}\right)\De t,0\right)$ and $(n\De t,x_1+\theta_n\De x)$ when $n$ is odd.
\end{enumerate}
	\begin{figure}[ht]
	\centering
	\begin{tikzpicture}
		
		\draw[thick][color=black] (-6.5 ,0) -- (4,0);
		\draw[thick][color=black] (-6.5 ,2) -- (4,2);
		\draw[thick][color=black] (-6.5 ,4) -- (4,4);
		\draw[thick][color=black] (-6.5 ,6) -- (4,6);
		
		\draw[thick][color=black] (-6.5 ,2) -- (-5.9,4);
		\draw[thick][color=black] (-6.5 ,0) -- (-6.5,6.5);
		\draw[thick][color=black] (-5 ,0) -- (-5,6.5);
		\draw[thick][color=black] (-3.5 ,0) -- (-3.5,6.5);
		\draw[thick][color=black] (-2 ,0) -- (-2,6.5);
		\draw[thick][color=black] (-0.5 ,0) -- (-0.5,6.5);
		\draw[thick][color=black] (1 ,0) -- (1,6.5);
		\draw[thick][color=black] (2.5,0) -- (2.5,6.5);
		
		\draw[thick][color=black] (-2,0) -- (-3.1,2);
		\draw[thick][color=black] (-2,0) -- (-1.2,2);
		\draw[thick][color=black] (-2,0) -- (-1.3,2);
		\draw[thick][color=black] (-2,0) -- (-1.4,2);
		\draw[thick][color=black] (-2,0) -- (-1.5,2);
		
		\draw[thick][color=black] (-5,0) -- (-4.4,2);
		\draw[thick][color=black] (-5,0) -- (-4.5,2);
		\draw[thick][color=black] (-5,0) -- (-4.6,2);
		\draw[thick][color=black] (-5,0) -- (-4.7,2);
		
		\draw[thick][color=black] (-5,4) -- (-4.2,6);
		\draw[thick][color=black] (-2,4) -- (-3,6);
		\draw[thick][color=black] (-2,4) -- (-3.1,6);
		\draw[thick][color=black] (-2,4) -- (-2.9,6);
		\draw[thick][color=black] (-2,4) -- (-1.5,6);
		\draw[thick][color=black] (-2,4) -- (-1.4,6);
		\draw[thick][color=black] (1,4) -- (0.5,6);
		\draw[thick][color=black] (1,4) -- (1.7,6);
		\draw[thick][color=black] (1,4) -- (1.5,6);
		\draw[thick][color=black] (1,4) -- (1.6,6);

		\draw[thick][color=black] (1,0) -- (0.2,2);
		\draw[thick][color=black] (1,0) -- (1.3,2);
		
		\draw[thick,dashed][color=blue] (-6.5,1)--(-5.4 ,2) -- (-3.5,0)--(-2.4,2)--(-0.5,0)--(0.6,2)--(2.5,0)--(3.5,1) ;
		\filldraw (-6.5,0) circle (1.5pt);
		\filldraw (-5,0) circle (1.5pt);
			\filldraw (-2,0) circle (1.5pt);
				\filldraw (1,0) circle (1.5pt);
		\filldraw (-5.4,2)  circle (1.5pt);
		\filldraw (-6.5,1)  circle (1.5pt);
		\filldraw (-3.5,0) circle (1.5pt);
		\filldraw (-2.4,2) circle (1.5pt);
		\filldraw (-0.5,0) circle (1.5pt);
		\filldraw (0.6,2) circle (1.5pt);
		\filldraw (2.5,0) circle (1.5pt);
		
		\draw[thick][color=black] (-3.5,2) -- (-4.2,4);
		\draw[thick][color=black] (-3.5,2) -- (-4.3,4);
		\draw[thick][color=black] (-3.5,2) -- (-4.4,4);
		\draw[thick][color=black] (-3.5,2) -- (-4.5,4);
		\draw[thick][color=black] (-3.5,2) -- (-2.9,4);
		\draw[thick][color=black] (-0.5,2) -- (0.5,4);
		\draw[thick][color=black] (-0.5,2) -- (-1.1,4);
		\draw[thick][color=black] (-0.5,2) -- (-1.2,4);
		\draw[thick][color=black] (-0.5,2) -- (-1.3,4);
		
			\filldraw (-6.5,2) circle (1.5pt);
				\filldraw (-6.5,4) circle (1.5pt);
		
		\draw[thick][color=black] (2.5,2) -- (2.9,4);
		\draw[thick][color=black] (2.5,2) -- (1.7,4);
		
		\draw[thick,dashed][color=blue] (-6.5,3)--(-5.4 ,2)--(-3.3,4)--(-2.4,2)--(-0.3,4)--(0.6,2)--(2.7,4)--(3.5,3) ;
		\draw[thick,dashed][color=blue] (-6.5,5)--(-4.8 ,6)--(-3.3,4)--(-1.8,6)--(-0.3,4)--(1.2,6)--(2.7,4)--(3.5,5) ;
		\filldraw (-6.5 ,3)  circle (1.5pt);
		\filldraw (-3.3,4) circle (1.5pt);
		\filldraw (-0.3,4) circle (1.5pt);
		\filldraw (2.7,4) circle (1.5pt);
		\filldraw (-6.5 ,5)  circle (1.5pt);
		\filldraw (-6.5,6)  circle (1.5pt);
		\filldraw (-4.8 ,6)  circle (1.5pt);
		\filldraw (-1.8,6)  circle (1.5pt);
		\filldraw (1.2,6)  circle (1.5pt);

		\draw[thick] (-6.5,0) node[anchor=north] {\tiny$(x_0,0)$};
		\draw[thick] (-5,0) node[anchor=north] {\tiny$(x_1,0)$};
		\draw[thick] (-3.5,0) node[anchor=north] {\tiny$(x_2,0)$};
		\draw[thick] (-2,0) node[anchor=north] {\tiny$(x_3,0)$};
		\draw[thick] (-0.5,0) node[anchor=north] {\tiny$(x_4,0)$};
		\draw[thick] (1,0) node[anchor=north] {\tiny$(x_5,0)$};
		\draw[thick] (2.5,0) node[anchor=north] {\tiny$(x_6,0)$};
		\draw[thick] (-6.5,2) node[anchor=east] {\tiny$(0,t_1)$};
		\draw[thick] (-6.5,4) node[anchor=east] {\tiny$(0,t_2)$};
		\draw[thick] (-6.5,1) node[anchor=east] {\tiny$\left(0,\frac{t_1}{2}\right)$};
		\draw[thick] (-6.5,3) node[anchor=east] {\tiny$\left(0,\frac{t_1+t_2}{2}\right)$};
		\draw[thick] (-6.5,5) node[anchor=east] {\tiny$\left(0,\frac{t_2+t_3}{2}\right)$};
		\draw[thick] (-6.5,6) node[anchor=east] {\tiny$\left(0,t_3\right)$};
	    \fill[white!40!white] (-6,2.1)rectangle (-5.4,2.5);
		\draw[thick] (-5.35,1.95) node[anchor=south east] {\tiny$(a^1_1,t_1)$};
		\draw[thick] (-2.35,1.9) node[anchor=south east] {\tiny$(a^1_3,t_1)$};
		\draw[thick] (0.65,1.9) node[anchor=south east] {\tiny$(a^1_5,t_1)$};
		
		\fill[white!40!white] (-2.8,4.05)rectangle (-3.5,4.3);
		\draw[thick] (-3.05,3.95) node[anchor=south] {\tiny$(a^2_2,t_2)$};
		
		\fill[white!40!white] (-0.15,4.05)rectangle (0.85,4.3);
		\draw[thick] (-0.05,4) node[anchor=south] {\tiny$(a^2_4,t_2)$};
		
		\fill[white!40!white] (2.7,4.05)rectangle (3.25,4.35);
		\draw[thick] (2.95,4) node[anchor=south] {\tiny$(a^2_6,t_2)$};
		
		\draw[thick] (-4.55,6) node[anchor=south] {\tiny$(a^3_1,t_3)$};
		\draw[thick] (-1.55,6) node[anchor=south] {\tiny$(a^3_3,t_3)$};
		\draw[thick] (1.45,6) node[anchor=south] {\tiny$(a^3_5,t_3)$};
		
	\end{tikzpicture}
	\caption{This illustrates the $I$-curves (in dotted lines) formed by the lines joining $(a^j_{i-1},t_j),(a^{j+1}_{i+1},t_{j+1})$ and $(a^j_{i-1},t_j),(a^{j-1}_{i+1},t_{j-1})$ where $a^i_j=x_j+\theta_i\De x$ and $x_j=j\De x$.}
	\label{fig-7}
\end{figure}
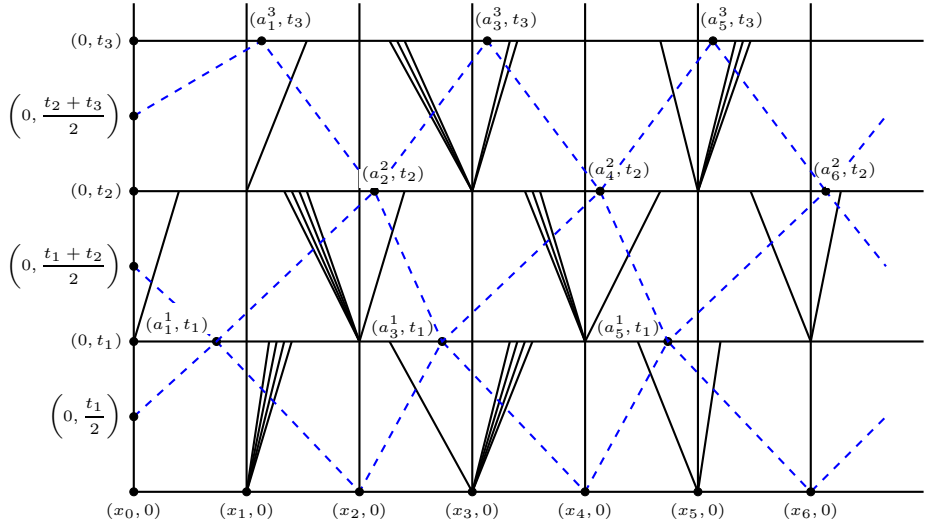

Similar to section \ref{sec:Glimm}, we define $O^n$-curve is an $I$-curve contained in $\{(x,t); x\geq0,n\De t\leq t\leq (n+1)\De t\}$, $n\geq0$. We consider $V$ as in \eqref{def:V} and adapting \cite{NS-bdy}, we modify the quadratic part $Q$ as follows.
\begin{equation*}
	Q_{b}=\sum\left\{\abs{\beta}\abs{\gamma}: \beta,\gamma\;\mbox{ cross $J$ and approach}\right\}+\sum\left\{\abs{\B}^2:\,\B\mbox{ is 1-shock}\right\}.
\end{equation*}
Similar to \eqref{def:F}, we define $F_b(J)=V(J)+KQ_b(J)$. We remark that for well-chosen $K$ the functional $F_b$ is non-increasing after a homogeneous interaction, that is, Lemma \ref{lemSmo} remains true at any interaction which is away from  boundary. Consider the notion $\tilde{J}$ as in Lemma \ref{lemSmo}, we can show that for sufficiently small $\e>0$ we have $F_b(\tilde{J})\leq F_b(J)$ when $\tilde{J}\setminus J$ is away from boundary. Note that here we have changed the quadratic part by adding the square of 1-shock strengths. This does not add any difficulty in estimating $F_b(\tilde{J})\leq F_b(J)$ when $\tilde{J}\setminus J$ is away from boundary and we omit the details here (see \cite{NS-bdy}). Therefore, to remove the interaction at boundary let us consider the following polygonal lines when $n$ is odd (similar argument follows for $n$ even) 
\begin{align*}
	J^{n}_{0}:&\left(\frac{2n+1}{2}\De t,0\right)\rr(n\De t,\De x+\theta_n\De x)\rr ((n+1)\De t,2\De x+\theta_{n+1}\De x),\\
	J^{n+1}_0:&\left(\frac{2n+3}{2}\De t,0\right)\rr((n+2)\De t,\De x+\theta_{n+2}\De x)\rr ((n+1)\De t,2\De x+\theta_{n+1}\De x).
\end{align*}
Note that from $\mathcal{O}^n$ to $J_0^n\cup\left(\mathcal{O}^{n+1}\setminus J_0^{n+1}\right)=:\mathcal{O}^{n+1}_1$ it does not count the effect of boundary. Hence, by using similar argument as in section \ref{sec:Glimm} we obtain
\begin{equation*}
	F_b(O^{n+1}_1)\leq F_b(O^n )(1+A_1\De t)+B_1\De t,
\end{equation*}
where the constants $A_1,B_1$ are as in \eqref{def:A1}--\eqref{def:B1}. Our BV estimation will be completed if we are able to make a bound of $F_b(O^{n+1})$ with respect to $F_b(O^{n+1}_1)$ or in other words, we want to compare strength of $\mathcal{R}_b[m_b^{n+1},U_{n+1,2}]$ with the waves intersecting $J_0^n$. We can not do this directly. Instead, we rely on the following three steps: (i) compare $\mathcal{R}[(\varrho_*,u_*),U_{n+1,2}]$ with the waves intersecting $J_0^n$ where $(\varrho_*,u_*)=\mathcal{R}_b[m_b^n,U_{n,1}](\De t,0)$. This will be same as an interaction away from boundary. (ii) Compare the strength of 2-wave arising in $\mathcal{R}_b[m_b^n,U_{n+1,2}]$ with $\mathcal{R}[(\varrho_*,u_*),U_{n+1,2}]$ by using Lemma \ref{lemma-ibvp-1} and then finally (iii) we compare strength of 2-wave arising in $\mathcal{R}_b[m_b^{n+1},U_{n+1,2}]$ with the strength of 2-wave arising in $\mathcal{R}_b[m_b^n,U_{n+1,2}]$ by using Lemma \ref{lemma-ibvp-2} . Next we justify these steps in more details.\\

\noi\underline{Step-(i):} We first consider a Riemann problem with $U_L=U_*$ where $U_*=(\varrho_*,u_*)=\mathcal{R}_b[m_b^n,U_{n,1}](\De t,0)$ and $U_{n+1,2}=(\varrho_{n+1,2},u_{n+1,2})$. Suppose the Riemann problem $\mathcal{R}[U_*,U_{n+1,2}]$ consists of 1-wave (say $\al_1$) of strength $\si_1^+$ and 2-wave (say $\al_2$) of strength $\si_2^+$. Let ${F}_*^{n+1}$ be defined as 
\begin{align*}
	F^{n+1}_*&:=(\si_1^+)_-+(\si_2^+)_-+K(\si_1^+)_-^2+V(\mathcal{O}^{n+1}\setminus J_0^{n+1})\\
	&+K\sum\limits\left\{\abs{\B}(\si_1^+)_-,\,\B\mbox{ is a shock approaching }\al_1\mbox{ and $\B$ is on $\mathcal{O}^{n+1}\setminus J_0^{n+1}$}\right\}\\
	&+K\sum\limits\left\{\abs{\B}(\si_2^+)_-,\,\B\mbox{ is a shock approaching }\al_2\mbox{ and $\B$ is on $\mathcal{O}^{n+1}\setminus J_0^{n+1}$}\right\}\\
	&+K\sum\left\{\abs{\beta}\abs{\gamma}: \beta,\gamma\;\mbox{ cross $J$ and approach and $\B,\ga$ are on $\mathcal{O}^{n+1}\setminus J_0^{n+1}$}\right\}\\
	&+K\sum\left\{\abs{\B}^2:\,\B\mbox{ is 1-shock and lies on $\mathcal{O}^{n+1}\setminus J_0^{n+1}$}\right\}.
\end{align*}
Again by similar argument as in section \ref{sec:Glimm} (see also \cite{NS-bdy}), we get  
\begin{equation*}
	F^{n+1}_*\leq F_b(\mathcal{O}^{n+1}_1)\leq F_b(O^n)(1+A_1\De t)+B_1\De t.
\end{equation*}

\noi\underline{Step-(ii):} Now we consider the boundary Riemann problem $\mathcal{R}_b[m_b^n,U_{n+1,2}]$ with boundary condition $m_b^n$. Let $(\varrho^\#,u^\#)$ be defined as $(\varrho^\#,u^\#)=\mathcal{R}_b[m_b^n,U_{n+1,2}](\De t,0)$ which implies $\varrho^\# u^\#=m_b^n$. Let $\si^\#$ be the strength of 2-wave (say $\al^\#$) in $\mathcal{R}_b[m_b^n,U_{n+1,2}]$. We define
\begin{align*}
	F^{n+1}_\#&:=(\si^\#)_-+V(\mathcal{O}^{n+1}\setminus J_0^{n+1})\\
	&+K\sum\limits\left\{\abs{\B}(\si^\#)_-,\,\B\mbox{ is a shock approaching }\al^\#\mbox{ and $\B$ is on $\mathcal{O}^{n+1}\setminus J_0^{n+1}$}\right\}\\
	&+K\sum\left\{\abs{\beta}\abs{\gamma}: \beta,\gamma\;\mbox{ cross $J$ and approach and $\B,\ga$ are on $\mathcal{O}^{n+1}\setminus J_0^{n+1}$}\right\}\\
	&+K\sum\left\{\abs{\B}^2:\,\B\mbox{ is 1-shock and lies on $\mathcal{O}^{n+1}\setminus J_0^{n+1}$}\right\}.
\end{align*}By applying Lemma \ref{lemma-ibvp-1}, we see that
\begin{equation*}
	F^{n+1}_\#\leq F^{n+1}_*\leq F_b(O^n)(1+A_1\De t)+B_1\De t.
\end{equation*}

\noi\underline{Step-(iii):} Finally, we consider the boundary Riemann problem $\mathcal{R}_b[m_b^{n+1},U_{n+1,2}]$  and compare it with $\mathcal{R}_b[m_b^n,U_{n+1,2}]$. Let $\si^{\ddagger}$ be the strength of 2-wave (say $\al^\ddagger$) arising in $\mathcal{R}[m_b^{n+1},U_{n+1,2}]$, then by Lemma \ref{lemma-ibvp-2} we have
\begin{align*}
	F_b(\mathcal{O}^{n+1})&=(\si^\ddagger)_-+V(\mathcal{O}^{n+1}\setminus J_0^{n+1})\\
	&+K\sum\limits\left\{\abs{\B}(\si^\ddagger)_-,\,\B\mbox{ is a shock approaching }\al^\ddagger\mbox{ and $\B$ is on $\mathcal{O}^{n+1}\setminus J_0^{n+1}$}\right\}\\
	&+K\sum\left\{\abs{\beta}\abs{\gamma}: \beta,\gamma\;\mbox{ cross $J$ and approach and $\B,\ga$ are on $\mathcal{O}^{n+1}\setminus J_0^{n+1}$}\right\}\\
	&+K\sum\left\{\abs{\B}^2:\,\B\mbox{ is 1-shock and lies on $\mathcal{O}^{n+1}\setminus J_0^{n+1}$}\right\}\\
	&\leq F^{n+1}_\#\leq F_b(O^n)(1+A_1\De t)+B_1\De t. 
\end{align*}
Rest of the proof follows in exactly same way as in Theorem \ref{theorem-1}.

\bigskip
\noindent\textbf{Acknowledgement.} Authors thank the IFCAM project ``Conservation laws: $BV^s$, interface and control". SSG would like to express thanks to the Department of Atomic Energy, Government of India, under project no. 12-R\&D-TFR-5.01-0520 for support. SSG acknowledge the support of Inspire faculty-research grant DST/INSPIRE/04/2016/000237. AJ acknowledges the support of Fondation Sciences Math\'ematiques de Paris-FSMP. BH has been partially funded by the ANR project INFAMIE ANR-15-CE40-0011.

\end{document}